\definecolor{dkgreen}{rgb}{0,0.6,0}
\definecolor{gray}{rgb}{0.5,0.5,0.5}
\definecolor{mauve}{rgb}{0.58,0,0.82}
\tiny\color{gray},
\newcommand*{\ldblbrace}{\{\mskip-5mu \left \{}
\newcommand*{\rdblbrace}{ \}\mskip-5mu \right \}}
\newcommand{\mean}[1]{\left\ldblbrace #1 \right\rdblbrace}
\newcommand{\mtx}[1]{\boldsymbol{#1}}
\renewcommand{\vec}[1]{\boldsymbol{#1}}
\newcommand{\norm}[1]{\left\lVert #1 \right\rVert}
\newcommand{\jump}[1]{\left\llbracket #1  \right\rrbracket}
\newtheorem{theorem}{Theorem}[section]
\newtheorem{corollary}[theorem]{Corollary}
\newtheorem{lemma}[theorem]{Lemma}
\newtheorem*{remark}{Remark}
\theoremstyle{definition}
\newtheorem{definition}[theorem]{Definition}
\newcommand{\revA}[1]{#1}
\newcommand{\revB}[1]{#1}
\newcommand{\orcid}[1]{ORCID:~\href{https://orcid.org/#1}{#1}}
\newenvironment{keywords}{\par\textbf{Key words.}}{\par}
\newenvironment{AMS}{\par\textbf{AMS subject classification.}}{\par}
\title{Structure-Preserving High-Order Methods for the Compressible Euler Equations in Potential Temperature Formulation for Atmospheric Flows}
\date{February 23, 2026} % TODO: date
\author[1]{Marco Artiano\thanks{\orcid{0009-0009-5872-702X}}}
\affil[1]{Institute of Mathematics, Johannes Gutenberg University Mainz, Germany}
\author[2]{Oswald Knoth}
\affil[2]{Leibniz Institute for Tropospheric Research (TROPOS), Leipzig, Germany}
\author[3]{Peter Spichtinger\thanks{\orcid{0000-0003-4008-4977}}}
\affil[3]{Institute for Atmospheric Physics, Johannes Gutenberg University Mainz, Germany}
\author[1]{Hendrik Ranocha\thanks{\orcid{0000-0002-3456-2277}}}
\begin{document}
\maketitle

\begin{abstract}
\noindent
   We develop structure-preserving numerical methods for the compressible Euler equations, employing potential temperature as a prognostic variable.
   We construct three numerical fluxes designed to ensure the conservation of entropy and total energy within the discontinuous Galerkin framework on general curvilinear meshes.
   Furthermore, we introduce a generalization for the kinetic energy preservation property and total energy conservation in the presence of a gravitational potential term.
   To this end, we adopt a flux-differencing approach for the discretization of the source term, treated as non-conservative product.
   We present well-balanced schemes for different constant background states for both formulations (total energy and potential temperature) on curvilinear meshes.
   Finally, we validate the methods by comparing the potential temperature formulation with the traditional Euler equations formulation across a range of classical atmospheric scenarios.
\end{abstract}

%TODO: keywords
\begin{keywords}
  structure-preserving methods,
  discontinuous Galerkin methods,
  flux differencing,
  well-balanced schemes,
  entropy conservation,
  entropy-stable methods,
  kinetic energy preservation,
  pressure equilibrium preservation
\end{keywords}

%TODO: MSC
\begin{AMS}
  65M12, % Stability and convergence of numerical methods for initial value and initial-boundary value problems involving PDEs
  65M20, % NA, PDEs, IVPs, IBVPs: Method of lines for initial value and initial-boundary value problems involving PDEs
  65M70, % Spectral, collocation and related methods for initial value and initial-boundary value problems involving PDEs
  65M60, % Finite element, Rayleigh-Ritz and Galerkin methods for initial value and initial-boundary value problems involving PDEs
  65M06 % NA, PDEs, IVPs, IBVPs: Finite difference methods for initial value and initial-boundary value problems involving PDEs
\end{AMS}

\section{Introduction}
Discontinuous Galerkin (DG) methods have gained significant attention for hyperbolic equations due to their ability to accurately capture complex fluid and wave phenomena in applications such as atmospheric physics. Their compact formulation enables excellent parallel scalability, making them suitable for high-resolution simulations \cite{NUMA}.
However, DG methods often require additional stabilization when handling discontinuities or under-resolved features. Especially in atmospheric simulations, to keep stability for long-time simulations, filters are applied \cite{GIRALDO20083849}, as well as other classical techniques such as over-integration (dealiasing) \cite{MENGALDO201556} and artificial viscosity \cite{ULLRICH2018427,CiCP-27-5}. While effective, these approaches can reduce accuracy and require parameter tuning.
To address these issues, entropy-stable (ES) DG schemes have been developed, offering high-order accuracy with guaranteed nonlinear stability without requiring additional parameters \cite{WARUSZEWSKI2022111507,Souza,LIU2025114095,winters2018comparative}.
An essential ingredient of these high-order DG schemes is the implementation of carefully constructed two-point fluxes for the volume terms, an approach pioneered in the context of DG methods by Carpenter et al.\ \cite{Carpenter2014, CarpenterBook}, Fisher et al.\ \cite{FISHER2013518}, and Gassner et al.\ \cite{gassner2016split}.

The compressible Euler equations admit several formulations, and it is not entirely clear whether one formulation offers significant advantages over the others.
In the computational fluid dynamics (CFD) community, considerable effort has been devoted to the development of structure-preserving schemes using the total energy as state variable \cite{Jameson2008,ismail2009affordable,ECChandra,Ranocha2018,Ranocha2022,WARUSZEWSKI2022111507,Souza,LIU2025114095,sjogreen2018high}.
In contrast, the use of potential temperature is especially popular in atmospheric models, where it also plays an important theoretical role in meteorology. Giraldo et al.\ \cite{GIRALDO20083849} and more recently Girfoglio et al.\ \cite{GIRFOGLIO2025106510} have analyzed different formulations for DG and spectral element methods (SEMs) and different pressure-based solvers for the finite volume framework, respectively.
In atmospheric applications, the presence of the gravitational potential is crucial for designing well-balanced schemes that preserve the hydrostatic equilibrium~\cite{ChandrashekarWB,WARUSZEWSKI2022111507,LIU2025114095} \revA{or even more general states~\cite{michel2025towards}}.
Additional desirable properties in the flux-differencing framework include the preservation of kinetic and potential energy (KPEP), as introduced by Souza et al.\ \cite{Souza}.

\revA{The potential temperature formulation is widely used in atmospheric applications~\cite{ABlendedSoundprooftoCompressibleNumericalModelforSmalltoMesoscaleAtmosphericDynamics,GIRALDO20083849, ICON2015,BALDAUF2021110635,wrf_version}.
Thus, developing structure-preserving methods for this formulation is of great interest for the atmospheric modeling community to simplify the implementation in existing models.
Moreover, a particular characteristic of the potential temperature formulation is that the thermodynamic equation (in conservative form) reduces to a simple transport equation.
This property makes this formulation also particularly suitable for (asymptotic-preserving) implicit-explicit (IMEX) time integration method \cite{GiraldoIMEX2013,BISPEN2017222,gmd-11-1497-2018}.
In the literature, several efforts have already been made to design structure-preserving methods with potential temperature as prognostic variable for different classes of spatial discretizations, including finite volume methods~\cite{Gassmann2013}, continuous spectral element methods combined with finite differences in the vertical direction~\cite{Mark2020}, and finite element methods~\cite{WimmerGolo2021}.}

In this work, \revA{first} we focus on the potential temperature formulation to investigate structure-preserving properties, in particular pressure equilibrium preserving (PEP). We design new two-point fluxes that guarantee conservation of either total energy (TEC) or thermodynamic entropy (EC). Building on the ideas in \cite{WARUSZEWSKI2022111507,Souza}, we generalize KPEP, introduce the condition for total energy conservation in presence of a geopotential, and propose a new well-balanced scheme by discretizing the source term as non-conservative product; all these properties are established for both the potential temperature and the total energy formulations. \revA{Furthermore, all the properties aforementioned are extended} to the DGSEM~\cite{Kopriva2009} using arbitrary curvilinear coordinates\revA{, including well-balancedness for isothermal and constant potential temperature background states for both formulation on curvilinear meshes.}

\revA{This paper is inspired by~\cite{GIRALDO20083849, WARUSZEWSKI2022111507,Souza}.
The main differences to~\cite{WARUSZEWSKI2022111507} concern the formulation of the energy equation and the set of equations considered.
We study two sets of equations: the potential temperature formulation and a total energy–type formulation in which the prognostic variable includes only internal and kinetic energy, while the gravitational potential is treated as a source term, in contrast to Waruszewski et al.~\cite{WARUSZEWSKI2022111507}, who evolve the full total energy in conservative form.
Moreover, Waruszewski et al.~\cite{WARUSZEWSKI2022111507} prove the well-balancedness of their scheme for the isothermal background state on a mesh aligned with the gravitational force, while we present a more general well-balanced scheme that can preserve both the isothermal and the constant potential temperature background states on curved meshes.}

The paper is organized as follows.
We introduce the Euler equations in the potential temperature formulation in Section~\ref{sec:fluxes} and derive novel EC and TEC two-point fluxes.
Section~\ref{sec:two_point_flux_properties} analyzes their structure-preserving properties, and discusses positivity of pressure and density.
Section~\ref{sec:additional_conservation_properties} presents the generalization of KPEP and TEC for flux differencing, and the new well-balanced scheme.
Section~\ref{sec:dgsem} considers the DGSEM in one dimension on Cartesian grids, while Section~\ref{sec:dgsem_curved} generalizes these flux differencing properties to three-dimensional DGSEM on curvilinear meshes.
Finally, in Section~\ref{sec:numerical_results} we report the numerical results comparing different formulations and two-point fluxes.
We summarize our results in Section~\ref{sec:conclusions}.

\section{Total energy and entropy-conservative fluxes}
\label{sec:fluxes}
The compressible Euler equations with the total energy as conservative variable read
\begin{equation}\label{EulerTotalEnergy}
	\begin{aligned}
		 & \partial_t \varrho + \nabla \cdot \left (\varrho \vec{V} \right ) = 0,                                            \\
		 & \partial_t (\varrho \vec{V}) + \nabla \cdot ( \varrho \vec{V} \otimes \vec{V}) + \nabla p = -\varrho \nabla \phi,    \\
		 & \partial_t (\varrho E) + \nabla \cdot \left ( (\varrho E + p) \vec{V} \right ) = -\varrho \vec{V} \cdot \nabla \phi,
	\end{aligned}
\end{equation}
where $\varrho$ is the density, $\vec{V}$ the velocity, $\varrho E$ is the sum of kinetic and internal energy, $p$ the pressure, and $\phi = \phi(x,y,z)$ is the gravitational potential.
We use an ideal gas equation of state, i.e.,
$\varrho E = \frac{p}{\gamma - 1} + \frac{1}{2} \varrho \norm{\vec{V}}^2$,
where $\gamma$ is the ratio of specific heats.
In the context of local weather prediction and numerical methods for atmospheric applications, the evolution of $\varrho E$ is often substituted with the evolution of the potential temperature $\theta$, i.e.,
\begin{equation}\label{EulerPotentialTemperature}
	\begin{aligned}
		 & \partial_t \varrho + \nabla \cdot \left (\varrho \vec{V} \right ) = 0,                                         \\
		 & \partial_t (\varrho \vec{V}) + \nabla \cdot ( \varrho \vec{V} \otimes \vec{V}) + \nabla p = -\varrho \nabla \phi, \\
		 & \partial_t (\varrho \theta) + \nabla \cdot \left (\varrho \theta \vec{V} \right ) = 0,
	\end{aligned}
\end{equation}
where $p = p_0 \left ( R \varrho \theta / p_0 \right )^\gamma$, $R$ is the ideal gas law constant, and $p_0$ is the atmospheric pressure, which we assume to be $\SI{100000}{Pa}$ throughout this study.
For smooth solutions, the conservative parts of the systems of equations~\eqref{EulerTotalEnergy} and~\eqref{EulerPotentialTemperature} induce additional scalar conservation laws, which are of the form
\begin{equation}
	\partial_t U + \nabla \cdot F \left ( U \right ) = 0,
	\label{EntropyEuler}
\end{equation}
where $U$ is a mathematical entropy functional and $F$ the corresponding flux. For the total energy formulation \eqref{EulerTotalEnergy}, we use the thermodynamic entropy
\begin{equation}
	U_{\varrho s} = \varrho s = \varrho \log \left ( \frac{p}{\varrho^\gamma} \right ).
\end{equation}
Usually, this quantity is set as an entropy functional for the derivation of entropy-conservative (EC) or entropy-stable (ES) two-point fluxes.
However, considering the conservative part of the system of our interest~\eqref{EulerPotentialTemperature}, $\varrho E$ (which coincides with the total energy in the absence of the geopotential term) is not a linear invariant. Therefore, we are interested in the construction of two-point numerical fluxes when either the thermodynamic entropy or the total energy are treated as entropy functional, i.e., we consider the mathematical entropies
\begin{equation}
U_{\varrho s} = \varrho s \quad \text{or} \quad U_{\varrho E} = \varrho E.
\end{equation}

\subsection{Entropy analysis}\label{entropy_analysis_section}
We consider the systems in 1D without gravity (the generalization to multiple dimensions is straightforward), i.e., hyperbolic conservation laws of the form
\begin{equation}
	\partial_t \vec{u} + \partial_x \vec{f}(\vec{u}) = 0.
	\label{HyperbolicConservationLaw1D}
\end{equation}
A semi-discrete finite volume (FV) method for \eqref{HyperbolicConservationLaw1D} reads
\begin{equation}
	\partial_t \vec{u}_i + \frac{1}{\Delta x} \big( \vec{f}^\mathrm{num}(\vec{u}_i,\vec{u}_{i+1}) - \vec{f}^\mathrm{num}(\vec{u}_{i-1},\vec{u}_{i}) \big) = 0.
    \label{FVmethod1D}
\end{equation}
From hereinafter, if not stated otherwise, we always consider the right interface at the $i$-th cell, i.e., $\vec{f}^\mathrm{num} := \vec{f}^\mathrm{num}(\vec{u}_i,\vec{u}_{i+1})$. Moreover, we use the classical operators
\begin{equation}
\label{properties_discrete_mean}
\begin{aligned}
	\jump{a} &=               a_\revA{+} - a_{\revA{-}},                 &\qquad& (\text{jump})\\
	\mean{a} &=               \frac{a_\revA{+} + a_\revA{-}}{2},       &\qquad& (\text{arithmetic mean})\\
	\mean{a}_{\mathrm{log}} &=  \frac{\jump{a}}{\jump{\log{a}}}, &\qquad& (\text{logarithmic mean, cf.\ \cite{ismail2009affordable}})\\
	\mean{a}_{\mathrm{geo}} &=  \sqrt{a_{i+1} a_{i}},            &\qquad& (\text{geometric mean})\\
	\mean{a}_\gamma &= \frac{\gamma-1}{\gamma}\frac{\jump{a^{\gamma}}}{\jump{a^{\gamma-1}}}, &\qquad& (\text{Stolarsky mean, cf.\ \cite{Winters2020}})
\end{aligned}
\end{equation}
\revA{where $a_\pm$ denote the values of $a$ at the right/left side of an interface, and} with the well-known properties
\begin{equation}
\label{discrete_chain_rules}
	\jump{ab}  = \mean{a}\jump{b} + \mean{b}\jump{a},
	\quad
	\jump{a^2} = 2 \jump{a}\mean{a},
	\quad
	\jump{1/a} = - \frac{\jump{a}}{\mean{a}_{\mathrm{geo}}^2}.
\end{equation}
Let $(U, F)$ denote a mathematical entropy-entropy flux pair for \eqref{HyperbolicConservationLaw1D}.

\begin{definition}[Tadmor \cite{Tadmor1987,Tadmor2003}]
A two-point numerical flux $\vec{f}^\mathrm{num}$ is EC for a given entropy $U$ if
\begin{equation}
\label{eq:TadmorDef}
	\jump{\vec{\omega}^T}\vec{f}^\mathrm{num} - \jump{\psi} = 0,
\end{equation}
where $\vec{\omega} = U'$ are the entropy variables and $\psi = \vec{\omega}^{T} \vec{f} - F$ is the flux potential.
\label{TadmorDef}
\end{definition}
\revA{The one-dimensional compressible Euler equations with the potential temperature in conservative form are
\begin{equation}
    \partial_t \begin{pmatrix}
           \varrho \\
           \varrho v\\
           \varrho \theta
         \end{pmatrix} + \partial_x \begin{pmatrix}
           \varrho v \\
           \varrho v^2 + p \\
           \varrho \theta v
         \end{pmatrix}
 =  \vec{0}.
 \label{EulerPotentialTemperature1D}
\end{equation}}%
It is of particular interest that the Hessian of the total energy and of the thermodynamic entropy, when expressed with respect to $\vec{u} = \left (\varrho, \varrho v, \varrho \theta \right )$, are given by
\begin{equation}
	 U_{\varrho E}''(\vec{u}) = \begin{bmatrix}
	    m^2/\varrho^3         & -m/\varrho^2 & 0           \\
		-m/\varrho^2                              & 1/\varrho & 0                                         \\
		0 & 0 & K \gamma (\varrho \theta)^{\gamma - 2}
	\end{bmatrix}, \quad  U_{\varrho s}''(\vec{u}) = \begin{bmatrix}
		\frac{\gamma}{\varrho}         & 0 & -\frac{\gamma}{\varrho \theta}            \\
		0                              & 0 & 0                                         \\
		-\frac{\gamma}{\varrho \theta} & 0 & \frac{\gamma \varrho}{(\varrho \theta)^2}
	\end{bmatrix},
\end{equation}
\revA{where $K = p_0 (R / p_0)^\gamma$ and $m = \varrho v$},
with eigenvalues
\begin{equation}
         \lambda_{\varrho E, 1} = 0, \quad \lambda_{\varrho E,2} = \frac{m^2 + \varrho^2}{\varrho^3}, \quad
         \lambda_{\varrho E, 3} = K \gamma (\varrho \theta)^{\gamma - 2},
\end{equation}
\begin{equation}
	 \lambda_{\varrho s, 1} = \lambda_{\varrho s, 2} = 0, \quad \lambda_{\varrho s, 3} = \frac{\gamma}{\varrho} + \gamma \frac{\varrho}{(\varrho \theta)^2}.
\end{equation}
Both Hessians are positive semidefinite, which implies that the entropy functionals are merely convex. As a consequence, the mapping between the conserved and the entropy variables is no longer one-to-one \cite{godlewski2021numerical}, in both cases. Nonetheless, strict convexity is not required for Tadmor's analysis to hold, as the conditions stated in Definition~\ref{TadmorDef} are still valid.
Therefore, this does not preclude the development of total energy-conservative (TEC) and thermodynamic entropy-conservative (EC) numerical fluxes, presented in the next sections. On the other hand, the conditions of Barth's Theorem \cite{Barth1999} cannot be directly applied to uniquely determine a dissipation operator \cite{WINTERS2017274}; as a result the construction of such operators is not straightforward.
\subsection{Derivation of a TEC flux}
Next we use the total energy $U_{\varrho E} = \varrho E$ as mathematical entropy for the system~\eqref{EulerPotentialTemperature} without gravity. Thus, given the set  of conservative variables $\vec{u} = (\varrho, \varrho v, \varrho \theta)$, we look for a numerical flux mimicking at the discrete level the conservation of the total energy, given in \eqref{EulerTotalEnergy}.
By introducing the conserved variables
\begin{equation}
	\vec{u} = (\varrho, \varrho v, \varrho \theta) = \left (\varrho, m, \varrho \theta \right ),
\end{equation}
we can rewrite the mathematical entropy (total energy) and the corresponding flux as
\begin{equation}
	 U_{\varrho E} = \varrho E = K\frac{(\varrho \theta)^\gamma}{\gamma - 1} + \frac{1}{2} \frac{m^2}{\varrho},
	 \qquad
	 F_{\varrho E} = \left (U_{\varrho E} + K(\varrho \theta)^\gamma\right ) \frac{m}{\varrho}.
\end{equation}
Thus, the mathematical entropy variables and the associated flux potential are
\begin{equation}
	 \vec{\omega}^T_\revA{{\varrho E}} = \left(-\frac{1}{2}\frac{m^2}{\varrho^2}, \frac{m}{\varrho}, \frac{\gamma}{\gamma -1}K (\varrho \theta)^{\gamma-1}\right),
	 \qquad
	 \psi_\revA{{\varrho E}} = K (\varrho \theta)^\gamma \frac{m}{\varrho}.
\end{equation}
Applying the general Tadmor EC condition~\eqref{eq:TadmorDef} to $U_{\varrho E} = \varrho E$, we obtain
\begin{equation}
    -\frac{1}{2}\jump{\frac{m^2}{\varrho^2}} f^\mathrm{num}_{\varrho} + \jump{\frac{m}{\varrho}} f^\mathrm{num}_{\varrho v} + \frac{\gamma}{\gamma -1} K \jump{(\varrho \theta)^{\gamma-1}} f^\mathrm{num}_{\varrho \theta} - K \jump{(\varrho \theta)^\gamma \frac{m}{\varrho}} = 0,
    \label{TEC}
\end{equation}
and we refer to~\eqref{TEC} as the total energy-conservative (TEC) condition.
To derive a TEC flux, we compute the jumps of the entropy variables in~\eqref{TEC} by recursively applying the discrete operators~\eqref{properties_discrete_mean} and \eqref{discrete_chain_rules} as follows:
\begin{align}
	\jump{\psi_\revA{{\varrho E}}} = & \jump{K (\varrho \theta)^\gamma \frac{m}{\varrho}} = \jump{K (\varrho \theta)^\gamma}\mean{\frac{m}{\varrho}} + \mean{K (\varrho \theta)^\gamma} \jump{\frac{m}{\varrho}} \nonumber \\ = &K\jump{ (\varrho \theta)^\gamma}\mean{\frac{m}{\varrho}} + K\mean{ (\varrho \theta)^\gamma}\left ( \jump{m}\mean{\frac{1}{\varrho}} - \frac{\mean{m}\jump{\varrho}}{ \mean{\varrho}_{\mathrm{geo}}^2 } \right ),
\end{align}
\begin{align}
	\jump{\omega_{\revA{\varrho E,1}}}  = & -\mean{\frac{m}{\varrho}} \left ( \jump{m}\mean{\frac{1}{\varrho}} - \frac{\mean{m}\jump{\varrho}}{ \mean{\varrho}_\mathrm{geo}^2 }  \right ), \\
	\jump{\omega_{\revA{\varrho E,2}}} =  & \jump{m}\mean{\frac{1}{\varrho}} - \frac{\mean{m}\jump{\varrho}}{ \mean{\varrho}_\mathrm{geo}^2 },                                            \\
	\jump{\omega_{\revA{\varrho E,3}}} =  & \frac{\gamma}{\gamma-1}K \jump{(\varrho \theta)^{\gamma-1}}.
\end{align}
By substituting all terms into the entropy conservation condition and requiring that each coefficient of the jumps vanishes, we obtain
\begin{equation}
\begin{gathered}
	\mean{\frac{m}{\varrho}} \mean{m}f^\mathrm{num}_\varrho   - \mean{m} f^\mathrm{num}_{\varrho  v}  + K\mean{ (\varrho \theta)^\gamma} \mean{m} = 0, \\
	f^\mathrm{num}_{\varrho v} = \mean{\frac{m}{\varrho}} f^\mathrm{num}_\varrho + K\mean{ (\varrho \theta)^\gamma}, \\
	\left ( \frac{\gamma}{\gamma-1} \jump{(\varrho \theta)^{\gamma-1}} \right ) f^\mathrm{num}_{\varrho \theta} =   \jump{ (\varrho \theta)^\gamma}\mean{\frac{m}{\varrho}}.
\end{gathered}
\end{equation}
By substituting the second into the first equation, one can verify that it is always satisfied, leading to a degree of freedom in the density flux.
Hence, we can state the following result:
\begin{theorem}
	A numerical flux for the compressible Euler equations in potential temperature formulation satisfying
	\begin{equation}
		f^{\mathrm{num}}_{\varrho v} = \mean{\frac{m}{\varrho}} f^{\mathrm{num}}_\varrho + \mean{p},
		\qquad
		f^{\mathrm{num}}_{\varrho \theta} = \mean{\varrho \theta}_\gamma \mean{\frac{m}{\varrho}},
	\end{equation}
	where $f^{\mathrm{num}}_{\varrho}$ is any consistent and symmetric discretization of $m = \varrho v$, is TEC.
\label{TECfluxTheorem}
\end{theorem}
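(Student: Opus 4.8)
The plan is to verify directly that the proposed flux satisfies Tadmor's condition in the form~\eqref{TEC}, rather than re-deriving it from scratch: I would substitute the two prescribed components, together with an arbitrary consistent and symmetric density flux $f^{\mathrm{num}}_{\varrho}$, into the left-hand side of~\eqref{TEC} and show that it collapses to zero as an algebraic identity in the two states. The advantage of this verification route over the coefficient-matching derivation preceding the statement is that it never requires treating the jumps of $\varrho$, $m$, $\varrho\theta$ as independent; the identity holds term by term. First I would rewrite the kinetic-energy jump with the discrete chain rules~\eqref{discrete_chain_rules}: since $m^2/\varrho^2=v^2$, the rule $\jump{a^2}=2\jump{a}\mean{a}$ gives $-\tfrac12\jump{m^2/\varrho^2}=-\mean{m/\varrho}\jump{m/\varrho}$. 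Abbreviating $A:=\jump{m/\varrho}$, the first two terms of~\eqref{TEC} become $-\mean{m/\varrho}A\,f^{\mathrm{num}}_{\varrho}+A\bigl(\mean{m/\varrho}f^{\mathrm{num}}_{\varrho}+\mean{p}\bigr)$, where the two contributions carrying $f^{\mathrm{num}}_{\varrho}$ cancel identically. I would flag this cancellation as the structural reason the density flux is a free parameter, i.e.\ the ``degree of freedom'' already announced.

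Next I would handle the two potential-temperature terms. Inserting $f^{\mathrm{num}}_{\varrho\theta}=\mean{\varrho\theta}_\gamma\mean{m/\varrho}$ and unfolding the Stolarsky mean through its definition $\mean{a}_\gamma=\tfrac{\gamma-1}{\gamma}\jump{a^\gamma}/\jump{a^{\gamma-1}}$ from~\eqref{properties_discrete_mean}, the prefactor $\tfrac{\gamma}{\gamma-1}\jump{(\varrho\theta)^{\gamma-1}}$ cancels against the denominator of the mean, leaving $K\jump{(\varrho\theta)^\gamma}\mean{m/\varrho}$. Expanding the last term of~\eqref{TEC} by the product rule $\jump{ab}=\mean{a}\jump{b}+\mean{b}\jump{a}$ yields $\jump{(\varrho\theta)^\gamma\,m/\varrho}=\jump{(\varrho\theta)^\gamma}\mean{m/\varrho}+\mean{(\varrho\theta)^\gamma}A$, so the $\jump{(\varrho\theta)^\gamma}\mean{m/\varrho}$ pieces annihilate and the entire expression reduces to $A\bigl(\mean{p}-K\mean{(\varrho\theta)^\gamma}\bigr)$.

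The concluding algebraic step is to use the equation of state $p=K(\varrho\theta)^\gamma$ with $K=p_0(R/p_0)^\gamma$: by linearity of the arithmetic mean, $\mean{p}=K\mean{(\varrho\theta)^\gamma}$, so the residual $A\bigl(\mean{p}-K\mean{(\varrho\theta)^\gamma}\bigr)$ vanishes identically. This is precisely where the specific choice $\mean{p}$ (as opposed to some other pressure average) in the momentum flux is forced. To finish the statement I would then check consistency and symmetry: every operator in~\eqref{properties_discrete_mean} is symmetric in its two arguments and reduces to its argument on the diagonal, so when the two states coincide the fluxes collapse to $(\varrho v,\ \varrho v^2+p,\ \varrho\theta v)$, matching the physical flux of~\eqref{EulerPotentialTemperature}, provided $f^{\mathrm{num}}_{\varrho}$ is a consistent symmetric discretization of $m=\varrho v$ as assumed.

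I expect the only genuinely delicate point to be the bookkeeping in the two cancellations, in particular confirming that the terms linear in $f^{\mathrm{num}}_{\varrho}$ cancel with no residual and that the $\jump{(\varrho\theta)^\gamma}\mean{m/\varrho}$ contributions cancel after the Stolarsky mean is unfolded. Keeping the combination $A=\jump{m/\varrho}$ packaged throughout is what makes this transparent; one should resist splitting it into coefficients of $\jump\varrho$ and $\jump m$, which makes the single momentum-flux constraint deceptively look like two. Everything else is a routine application of the identities in~\eqref{discrete_chain_rules} and the mean definitions in~\eqref{properties_discrete_mean}.
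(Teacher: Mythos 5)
Your proof is correct, and it takes a genuinely different route from the paper: a direct verification rather than a construction. The paper's own argument is the derivation immediately preceding the theorem --- it expands $\jump{\psi}$ and the jumps of the entropy variables via \eqref{discrete_chain_rules}, splitting $\jump{m/\varrho}$ into $\jump{m}\mean{1/\varrho} - \mean{m}\jump{\varrho}/\mean{\varrho}_{\mathrm{geo}}^2$, then treats the elementary jumps as if independent, sets each coefficient to zero, and solves the resulting system (whose first equation turns out to be implied by the second, exposing the free density flux). You instead substitute the stated flux into \eqref{TEC} and check the identity outright, keeping $A=\jump{m/\varrho}$ packaged. Both routes rest on the same ingredients --- Tadmor's condition, the chain rules \eqref{discrete_chain_rules}, the Stolarsky mean, and $\mean{p}=K\mean{(\varrho\theta)^\gamma}$ from the equation of state --- but yours is the logically cleaner proof of the theorem \emph{as stated}: coefficient matching on jumps that are not actually independent yields sufficient conditions aimed at discovering the flux, whereas your computation establishes the identity with no such framing, and the cancellation of the terms carrying $f^{\mathrm{num}}_{\varrho}$ exhibits the degree of freedom without solving a system. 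What the paper's route buys in exchange is the constructive direction: it shows how the flux components are found, not merely that they work. One minor point worth adding to your write-up: unfolding $\mean{\varrho\theta}_\gamma$ divides by $\jump{(\varrho\theta)^{\gamma-1}}$, so the degenerate case $(\varrho\theta)_i=(\varrho\theta)_{i+1}$ should be dispatched by continuity (there the third and fourth terms of \eqref{TEC} vanish and reduce consistently on their own); the same caveat is left implicit in the paper.
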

\revB{We use the classical notations of consistency and symmetry of (two-point) numerical fluxes: $f^\mathrm{num}$ is symmetric if $f^\mathrm{num}(u_L, u_R) = f^\mathrm{num}(u_R, u_L)$ for all $u_L, u_R$; it is consistent if $f^\mathrm{num}(u,u) = f(u)$ for all $u$.}
We now turn our attention to the derivation of thermodynamic entropy-conservative numerical fluxes and, in particular, how to choose the degree of freedom given by the density flux.

\subsection{Conservation of the thermodynamic entropy}
In the development of EC fluxes for compressible Euler equations, a common approach involves deriving these fluxes so that $\varrho s$ is conserved at a semi-discrete level \cite{Ranocha2018,ECChandra,ismail2009affordable}.
This is the first step towards the development of ES fluxes \cite{WINTERS2017274,WARUSZEWSKI2022111507}.

In contrast to the previous section, where the total energy has been treated as a mathematical entropy, we focus on the thermodynamic entropy and derive numerical fluxes that conserve it at the semi-discrete level. Since the procedure is analogous to the previous section, we highlight only the main steps in the derivation. We start considering the set of variables $\vec{u} = (\varrho, \varrho v, \varrho \theta)$ and the entropy functional $U_{\varrho s}$.
The associated entropy variables are
\begin{equation}
	\vec{\omega}_{\varrho s}^T = \left(
		\log\bigl(\frac{K(\varrho \theta)^\gamma}{\varrho ^\gamma}\bigr) - \gamma,
		0,
		\gamma \frac{\varrho}{\varrho \theta}
	\right).
\end{equation}
 Following the same steps for the TEC flux, the potential flux for this case reduces to $\psi_{\revA{{\varrho s}}} = 0$. Consequently, we have that a numerical flux $f^\mathrm{num} = (f^\mathrm{num}_\varrho, f^\mathrm{num}_{\varrho v}, f^\mathrm{num}_{\varrho \theta})$ is EC if
   \begin{equation}
	\jump{\log \left(\frac{ \varrho}{\varrho \theta } \right)} f^\mathrm{num}_{\varrho} - \jump{ \frac{\varrho}{\varrho \theta}} f^\mathrm{num}_{\varrho \theta} = 0.
	\label{ECCondition}
\end{equation}
We refer to~\eqref{ECCondition} as the EC condition.
Recalling that in the numerical TEC fluxes, the numerical density flux constituted a degree of freedom,  we can employ this EC condition to derive two new numerical fluxes. The first is to use the density flux coming from \eqref{ECCondition} to obtain a flux which is TEC and EC, i.e.,
\begin{equation}
	f^{\mathrm{num}}_\varrho
	=
	\frac{\jump{\frac{\varrho}{\varrho \theta}}}{\jump{\log \left (\frac{\varrho }{\varrho \theta} \right )}} f^{\mathrm{num}}_{\varrho \theta}
	=
	\mean{1 / \theta}_{\mathrm{log}} f^{\mathrm{num}}_{\varrho \theta}.
\end{equation}
However, this leads to an influence of the pressure in the density flux, which may lead to loss of positivity of pressure values \cite{DERIGS2017624}.  The second choice is to use the potential temperature flux coming from~\eqref{ECCondition}, i.e.,
\begin{equation}
	f^{\mathrm{num}}_{\varrho \theta}
	=
	\frac{\jump{\log \left (\frac{\varrho }{\varrho \theta} \right )}}{\jump{\frac{\varrho}{\varrho \theta}}} f_{\varrho}^{\mathrm{num}}
	=
	\frac{1}{\mean{1 / \theta}_{\mathrm{log}}} f^{\mathrm{num}}_{\varrho}.
\end{equation}
This numerical flux is EC by construction and presents a degree of freedom in the density flux. In summary, we consider the numerical fluxes
\begin{itemize}
	\item \textbf{TEC Flux}
	      \begin{equation}
		      \begin{aligned}
			      f^\mathrm{num}_{\varrho} &= \text{any consistent and symmetric flux of $\varrho v$}, \\
			      f^\mathrm{num}_{\varrho v} &=  f^\mathrm{num}_{\varrho} \mean{v} + \mean{p},                                \\
			      f^\mathrm{num}_{\varrho \theta} &= \mean{\varrho \theta}_\gamma \mean{v},
		      \end{aligned}
		      \label{TECflux}
	      \end{equation}
	\item \textbf{EC Flux}
	      \begin{equation}
		      \begin{aligned}
			      f^\mathrm{num}_{\varrho} &= \text{any consistent and symmetric flux of $\varrho v$}, \\
			      f^\mathrm{num}_{\varrho v} &=  f^\mathrm{num}_{\varrho} \mean{v} + \mean{p},                                \\
			      f^\mathrm{num}_{\varrho \theta} &= f^\mathrm{num}_{\varrho} \frac{1}{\mean{1 / \theta}_{\mathrm{log}}}
		      \end{aligned}
		      \label{ECflux}
	      \end{equation}
	\item \textbf{ETEC Flux} (Entropy and Total Energy Conservative)
	      \begin{equation}
		      \centering
		      \begin{aligned}
			      f^\mathrm{num}_{\varrho} &=   f^\mathrm{num}_{\varrho \theta} \mean{1 / \theta}_{\mathrm{log}}, \\
			      f^\mathrm{num}_{\varrho v} &=  f^\mathrm{num}_{\varrho} \mean{v} + \mean{p},                          \\
			      f^\mathrm{num}_{\varrho \theta} &= \mean{\varrho \theta}_\gamma \mean{v}.
		      \end{aligned}
		      \label{ETECflux}
	      \end{equation}
\end{itemize}

The first two fluxes allow for some degree of freedom in the choice of the density flux, which can be used to enforce additional structure-preserving properties.
For sake of completeness, here we report the TEC flux in the $x$-direction for the 2D case with velocity $\vec{V} = (u,v)$:
	      \begin{equation}
		      \begin{aligned}
			      f^\mathrm{num,x}_{\varrho} &= \text{any consistent and symmetric flux of $\varrho u$}, \\
			      f^\mathrm{num,x}_{\varrho u} &=  f^\mathrm{num,x}_{\varrho} \mean{u} + \mean{p},                                \\
			      f^\mathrm{num,x}_{\varrho v} &=  f^\mathrm{num,x}_{\varrho} \mean{v},                                \\
			      f^\mathrm{num,x}_{\varrho \theta} &= \mean{\varrho \theta}_\gamma \mean{u}.
		      \end{aligned}
		      \label{TECflux2D}
	      \end{equation}
The other fluxes are generalized analogously to multiple dimensions.
In the next section, we further study and characterize the properties for all the fluxes.

\section{Two-point flux structure-preserving properties of $(\varrho, \varrho v, \varrho \theta)$ }
\label{sec:two_point_flux_properties}

In this section we keep our analysis to the one-dimensional case
%\begin{equation}
%    \partial_t \begin{pmatrix}
%           \varrho \\
%           \varrho v\\
%           \varrho \theta
%         \end{pmatrix} + \partial_x \begin{pmatrix}
%           \varrho v \\
%           \varrho v^2 + p \\
%           \varrho \theta v
%         \end{pmatrix}
% =  \vec{0},
% \label{EulerPotentialTemperature1D_deleted}
%\end{equation}
and analyze in detail the different properties arising for the compressible Euler equations with the potential temperature and the derivation of our new fluxes \eqref{ECflux}, \eqref{TECflux}, and \eqref{ETECflux}.
First we recall the definition of fundamental and desirable properties of two-point numerical fluxes for the compressible Euler equations.

\begin{definition}[Kinetic energy preservation \cite{Jameson2008,Kuya2018,Ranocha2020,ranocha2018thesis,Ranocha2022}]
A numerical flux $f^\mathrm{num} = (f^{\mathrm{num}}_{\varrho}, f^{\mathrm{num}}_{\varrho v}, f^{\mathrm{num}}_{\varrho \theta})$ is KEP if
\begin{equation}
    f^{\mathrm{num}}_{\varrho v} = \mean{v} f^{\mathrm{num}}_{\varrho} + \mean{p}.
\label{kep}
\end{equation}
\end{definition}

The pressure equilibrium preserving (PEP) property, given the relation between pressure and potential temperature, has to be reformulated in terms of the new conserved variables. Following \cite{Ranocha2022}, we introduce the following definition \revA{of the PEP property in terms} of the potential temperature.
\begin{definition} A numerical flux $f^{\mathrm{num}} = (f^{\mathrm{num}}_\varrho, f^{\mathrm{num}}_{\varrho v}, f^{\mathrm{num}}_{\varrho \theta})$ is PEP if
\begin{equation}
   f^{\mathrm{num}}_{\revB{\varrho v}} = v f^{\mathrm{num}}_\varrho + \text{const}(p v),
   \qquad
   f^{\mathrm{num}}_{\varrho \theta} = \text{const}(\varrho \theta, v),
\label{PEPproperty}
\end{equation}
provided that velocity $v$ and pressure $p$ are constant throughout the domain.
\end{definition}

It is straightforward to prove that the TEC and ETEC numerical fluxes derived in the previous section are PEP.
The above definition is motivated by the following
\begin{lemma}
    Pressure equilibrium, i.e., $p \equiv \text{const}$ and $v \equiv \text{const}$ is preserved by the FV semi-discretization if and only if the numerical flux is PEP.
\end{lemma}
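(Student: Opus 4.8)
The plan is to translate both sides of the equivalence into statements about the time derivatives of the conserved variables at each cell, and then reduce the converse to an elementary functional identity for the two-point flux restricted to the equilibrium manifold. First I would characterize the equilibrium precisely. Since $p = K(\varrho\theta)^\gamma$ with $K,\gamma>0$, the condition $p \equiv \bar p = \text{const}$ is equivalent to $\varrho\theta \equiv \overline{\varrho\theta} = \text{const}$, while $v \equiv \bar v = \text{const}$ is equivalent to $(\varrho v)_i = \bar v\, \varrho_i$ in every cell. Thus the equilibrium states form the set
\[
\mathcal{M} = \bigl\{\, \vec{u}_i = (\varrho_i,\ \bar v\,\varrho_i,\ \overline{\varrho\theta}) : \varrho_i > 0 \,\bigr\},
\]
parametrized by the two constants $\bar v,\overline{\varrho\theta}$ and an \emph{arbitrary} positive density profile $\{\varrho_i\}$. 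Differentiating $v_i = (\varrho v)_i/\varrho_i$ and $p_i = K(\varrho\theta)_i^\gamma$ in time, and using $\varrho_i,\overline{\varrho\theta}>0$, shows that the semi-discretization keeps the state on $\mathcal{M}$ if and only if, for every $i$,
\[
\partial_t (\varrho\theta)_i = 0 \qquad\text{and}\qquad \partial_t (\varrho v)_i = \bar v\, \partial_t \varrho_i .
\]
Inserting the FV update~\eqref{FVmethod1D}, these two conditions say exactly that $f^{\mathrm{num}}_{\varrho\theta}$ and the combination $f^{\mathrm{num}}_{\varrho v} - \bar v\, f^{\mathrm{num}}_{\varrho}$ take the same value at the left and at the right interface of every cell.

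For the direction PEP $\Rightarrow$ preservation I would simply evaluate a PEP flux on $\mathcal{M}$. By~\eqref{PEPproperty} the $\varrho\theta$-flux equals the spatial constant $\overline{\varrho\theta}\,\bar v$ and $f^{\mathrm{num}}_{\varrho v} - \bar v\, f^{\mathrm{num}}_{\varrho}$ equals the spatial constant $\bar p$ at \emph{every} interface; hence the telescoping differences cancel, giving $\partial_t(\varrho\theta)_i = 0$ and $\partial_t(\varrho v)_i = \bar v\,\partial_t\varrho_i$, and therefore $\partial_t p_i = \partial_t v_i = 0$. I would emphasize that $f^{\mathrm{num}}_\varrho$ itself is neither zero nor spatially constant—the density is advected—yet the specific combination above is, which is what the definition isolates.

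The converse is where the real work sits, and I expect the quantifier to be the main obstacle. Preservation by itself only yields the telescoping (one-sided-difference) identities for the given profile; I must upgrade these to the genuinely pointwise formulas~\eqref{PEPproperty}, and for that I would use that preservation is demanded for \emph{all} admissible density profiles. Writing $g(\varrho_L,\varrho_R) := f^{\mathrm{num}}_{\varrho\theta}$ as a function of the two neighboring densities on $\mathcal{M}$, the condition $g(\varrho_i,\varrho_{i+1}) = g(\varrho_{i-1},\varrho_i)$ must hold for every $i$ and every choice of $\varrho_{i-1},\varrho_i,\varrho_{i+1}$. Choosing a profile with $\varrho_{i-1}=\varrho_i$ and arbitrary $\varrho_{i+1}$, and invoking consistency $g(\varrho,\varrho)=\overline{\varrho\theta}\,\bar v$, forces $g(\varrho_i,\varrho_{i+1})=\overline{\varrho\theta}\,\bar v$ for all arguments, i.e. $f^{\mathrm{num}}_{\varrho\theta}=\text{const}(\varrho\theta,v)$. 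Applying the identical argument to $h(\varrho_L,\varrho_R):=f^{\mathrm{num}}_{\varrho v}-\bar v\, f^{\mathrm{num}}_{\varrho}$, whose consistent value is $h(\varrho,\varrho)=(\varrho\bar v^2+\bar p)-\bar v(\varrho\bar v)=\bar p$, gives $h\equiv\bar p$, that is $f^{\mathrm{num}}_{\varrho v}=\bar v\, f^{\mathrm{num}}_{\varrho}+\text{const}(p v)$. These are precisely the PEP identities, which closes the equivalence. The delicate point to state carefully is this passage from a telescoping identity to a pointwise flux formula: it is valid only because preservation is required for every state on $\mathcal{M}$, not merely for one profile.
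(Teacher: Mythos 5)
Your proof is correct and follows essentially the same route as the paper: both reduce pressure-equilibrium preservation to the semi-discrete evolution equations for $p_i$ and $v_i$ and observe that these vanish precisely when the interface values of $f^{\mathrm{num}}_{\varrho\theta}$ and of $f^{\mathrm{num}}_{\varrho v} - \bar v\, f^{\mathrm{num}}_{\varrho}$ coincide across each cell. Your explicit handling of the converse direction---upgrading the telescoping identities to the pointwise flux formulas by varying the density profile and invoking consistency---is a step the paper compresses into its final ``if and only if'' assertion, so it is a welcome elaboration of the same argument rather than a different one.
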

\begin{proof}
The pressure equation written in terms of the conserved variables $(\varrho, \varrho v, \varrho \theta)$ is
\begin{equation}
   \partial_t p =  \partial_t (K (\varrho \theta)^\gamma) = \gamma (\varrho \theta)^{\gamma - 1} K \partial_t (\varrho \theta).
\end{equation}
Recalling that $\partial_t(\varrho \theta) = - \partial_x (\varrho \theta v)$ and plugging into the previous equation,
\begin{equation}
    \partial_t(p) = - \gamma K(\varrho \theta)^{\gamma -1} \partial_x (\varrho \theta v).
    \label{eq_pressure_theta}
\end{equation}
A semi-discretization of \eqref{eq_pressure_theta} leads to
\begin{equation}
    \partial_t p_i = -\gamma K (\varrho \theta)_i^{\gamma -1} \frac{1}{\Delta x} \left (f^{\mathrm{num}}_{\varrho \theta}(\vec{u}_{i+1},\vec{u}_{i}) - f^{\mathrm{num}}_{\varrho \theta}(\vec{u}_{i},\vec{u}_{i-1})\right ).
\end{equation}
On the other hand, the semidiscrete evolution equation for the velocity is \cite{Ranocha2022}
\begin{multline}
    \varrho_i \partial_t v_i = \partial_t \varrho_i v_i \revB{- v_i \partial_t \varrho_i} = -\frac{1}{\Delta x} \Bigl (f^{\mathrm{num}}_{\varrho v}(\vec{u}_{i+1},\vec{u}_i) - f^{\mathrm{num}}_{\varrho v}(\vec{u}_{i},\vec{u}_{i-1}) \\- v_i  ( f^{\mathrm{num}}_{\varrho}(\vec{u}_{i+1},\vec{u}_i) - f^{\mathrm{num}}_{\varrho}(\vec{u}_{i},\vec{u}_{i-1}) )  \Bigr ).
\end{multline}
Thus, $\partial_t v_i = 0$ and $\partial_t p_i = 0$ if and only if \eqref{PEPproperty} is satisfied.
\end{proof}

\subsection{Characterization of fluxes}

Our first goal is to show that it is impossible to construct a numerical flux that is EC, TEC, KEP, PEP, and has no influence of the pressure in the density flux for the set of variables $(\varrho, \varrho v, \varrho \theta)$. In other words, when dealing with the potential temperature as primary invariant, the influence of the pressure term in the density flux cannot be avoided if TEC and EC are desired property for the numerical scheme, which can be formulated with the following
\begin{theorem}
\label{theorem1}
For the compressible Euler equations in potential temperature formulation \eqref{EulerPotentialTemperature1D}, the ETEC flux \eqref{ETECflux}
is EC, TEC, KEP and PEP. Moreover, it is the only numerical flux with these
properties for $v \equiv \text{const}$.
\end{theorem}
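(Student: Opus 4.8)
The plan is to prove the two assertions of Theorem~\ref{theorem1} separately: first that the ETEC flux has all four listed properties, and then the harder uniqueness claim under the restriction $v \equiv \text{const}$. For the first part, I would simply collect the facts already established in the excerpt. The ETEC flux \eqref{ETECflux} was constructed to satisfy the EC condition \eqref{ECCondition} by choosing the density flux $f^\mathrm{num}_\varrho = f^\mathrm{num}_{\varrho\theta}\mean{1/\theta}_{\mathrm{log}}$, and simultaneously it has the TEC form $f^\mathrm{num}_{\varrho\theta} = \mean{\varrho\theta}_\gamma\mean{v}$, so by the TEC theorem it is TEC. The momentum component is $f^\mathrm{num}_{\varrho v} = \mean{v}f^\mathrm{num}_\varrho + \mean{p}$, which is literally the KEP condition \eqref{kep}. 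For PEP I would invoke the remark already made after Definition (PEP) that TEC and ETEC fluxes are PEP: when $v$ and $p$ are constant, $f^\mathrm{num}_{\varrho v}$ reduces to $vf^\mathrm{num}_\varrho + \mean{p}$ with $\mean{p}$ constant, and $f^\mathrm{num}_{\varrho\theta} = \mean{\varrho\theta}_\gamma v$ depends only on $\varrho\theta$ and $v$.

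The substance is the uniqueness statement. I would set $v \equiv \text{const}$ throughout and argue that \emph{any} numerical flux simultaneously EC, TEC, KEP, and PEP must coincide with the ETEC flux. The strategy is to impose the four conditions as algebraic constraints on the three flux components and show the system has a unique solution. KEP \eqref{kep} and the TEC momentum relation \eqref{TECflux} both force $f^\mathrm{num}_{\varrho v} = \mean{v}f^\mathrm{num}_\varrho + \mean{p}$, so the momentum component is pinned down once $f^\mathrm{num}_\varrho$ is known. The TEC condition then fixes the potential-temperature component as $f^\mathrm{num}_{\varrho\theta} = \mean{\varrho\theta}_\gamma\mean{v}$ (from the third equation in the TEC derivation, which is independent of the density-flux freedom). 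It remains to determine $f^\mathrm{num}_\varrho$: the EC condition \eqref{ECCondition} couples $f^\mathrm{num}_\varrho$ and $f^\mathrm{num}_{\varrho\theta}$ via $f^\mathrm{num}_\varrho = \mean{1/\theta}_{\mathrm{log}}f^\mathrm{num}_{\varrho\theta}$, which, combined with the now-fixed $f^\mathrm{num}_{\varrho\theta}$, leaves no remaining freedom and yields exactly the ETEC density flux. Thus the degree of freedom in the density flux present in the TEC and EC families is removed once both EC and TEC are demanded together.

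The main obstacle I anticipate is justifying why the hypothesis $v \equiv \text{const}$ is exactly what makes the argument go through, and in particular understanding the precise role of PEP and of the "no pressure influence" discussion. With general $v$, the EC condition ties the density flux to pressure-dependent quantities (through $\mean{\varrho\theta}_\gamma\mean{v}$), so one cannot escape pressure appearing in $f^\mathrm{num}_\varrho$; the preceding text emphasizes that this pressure influence is unavoidable for a joint EC/TEC flux. Restricting to $v \equiv \text{const}$ is what lets us cleanly separate the constraints and conclude uniqueness, since the velocity jumps that would otherwise introduce additional consistent-and-symmetric freedom in $f^\mathrm{num}_\varrho$ are absent. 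I would therefore take care to state explicitly which jumps vanish under $v \equiv \text{const}$ and verify that the resulting linear system for the flux components is genuinely determined, checking consistency of the unique solution with the ETEC formulas \eqref{ETECflux}.
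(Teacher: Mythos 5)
Your proposal is correct and takes essentially the same route as the paper: the four properties are verified by construction/substitution, and your uniqueness argument for $v \equiv \text{const}$ is exactly the mechanism of the paper's Lemmas~\ref{lemma57} and~\ref{lemma58} --- the TEC condition~\eqref{TEC} becomes scalar (since $\jump{v} = \jump{v^2} = 0$) and forces $f^\mathrm{num}_{\varrho\theta} = \mean{\varrho\theta}_\gamma v$, the EC condition~\eqref{ECCondition} then forces $f^\mathrm{num}_\varrho = \mean{1/\theta}_{\mathrm{log}} f^\mathrm{num}_{\varrho\theta}$, and KEP fixes $f^\mathrm{num}_{\varrho v}$. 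The only cosmetic differences are that you pin the momentum component via KEP where the paper's Lemma~\ref{lemma58} formally cites PEP, and that the paper covers the degenerate state pairs (where $\jump{\varrho\theta} = 0$ or $\jump{\theta} = 0$ make TEC/EC vacuous) through its consistent-mean ansatz $\varphi_{1,2}$, a point your write-up correctly flags and defers to the final consistency check.
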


\begin{remark}
    Note that the numerical flux of Theorem~\ref{theorem1} has a density flux
    influenced by the pressure, which may cause the density to assume negative values \cite{DERIGS2017624}.
\end{remark}

To prove Theorem~\ref{theorem1}, we derive the necessary conditions for EC, TEC, and PEP.
\begin{lemma}
\label{TECconstantvp}
    For $p = \text{const}$, $v = \text{const}$, any consistent and symmetric numerical flux $f^\mathrm{num} = (f^\mathrm{num}_{\varrho}, f^\mathrm{num}_{\varrho v}, f^\mathrm{num}_{\varrho \theta} )$ is TEC and KEP.
\end{lemma}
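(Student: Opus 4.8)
The plan is to prove the two assertions separately, using the decisive fact that $p \equiv \text{const}$ is equivalent to $\varrho\theta \equiv \text{const}$: since $p = K(\varrho\theta)^\gamma$, a constant pressure forces $\varrho\theta$ (hence $(\varrho\theta)^{\gamma-1}$ and $(\varrho\theta)^\gamma$) to be spatially constant, so that $\jump{\varrho\theta} = \jump{(\varrho\theta)^{\gamma-1}} = \jump{(\varrho\theta)^\gamma} = 0$ across every interface. Together with $v \equiv \text{const}$, which gives $\jump{v} = 0$ and therefore $\jump{m/\varrho} = 0$ and $\jump{m^2/\varrho^2} = \jump{v^2} = 0$, this makes all the jumps that enter the conservation conditions degenerate.

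For TEC I would invoke Tadmor's condition \eqref{eq:TadmorDef} directly rather than the expanded identity \eqref{TEC}. The total-energy entropy variables are $\vec{\omega}^T = \bigl(-\tfrac12 v^2,\, v,\, \tfrac{\gamma}{\gamma-1}K(\varrho\theta)^{\gamma-1}\bigr)$ and the flux potential is $\psi = K(\varrho\theta)^\gamma v$; by the previous paragraph each entry of $\vec{\omega}$ and $\psi$ itself is a spatially constant quantity, so $\jump{\vec{\omega}} = \vec{0}$ and $\jump{\psi} = 0$. Hence $\jump{\vec{\omega}^T}f^\mathrm{num} - \jump{\psi} = 0$ holds term by term, irrespective of the numerical values of $f^\mathrm{num}$, and every consistent and symmetric flux is TEC. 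This step is immediate and uses both hypotheses.

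The KEP assertion is the delicate one, precisely because one must not presuppose the momentum relation $f^\mathrm{num}_{\varrho v} = \mean{v}f^\mathrm{num}_\varrho + \mean{p}$ — that relation is the property at stake, so assuming it (as in the constructed fluxes of Section~\ref{sec:fluxes}) would be circular. I would instead argue through the discrete kinetic-energy balance. Writing $k_i = \tfrac12 m_i^2/\varrho_i$ and applying the chain rule gives $\partial_t k_i = v_i\,\partial_t m_i - \tfrac12 v_i^2\,\partial_t\varrho_i$; inserting the finite-volume updates \eqref{FVmethod1D}, summing over the cells, reindexing the interface contributions, and using $\jump{v^2} = 2\jump{v}\mean{v}$ from \eqref{discrete_chain_rules} yields a total kinetic-energy production of the form
\[
\sum_i \Delta x\,\partial_t k_i = \sum_{\text{interfaces}} \jump{v}\bigl(f^\mathrm{num}_{\varrho v} - \mean{v}\,f^\mathrm{num}_\varrho\bigr).
\]
Under $v \equiv \text{const}$ the prefactor $\jump{v}$ vanishes at every interface, so the production is zero for any consistent and symmetric flux — no structural assumption on $f^\mathrm{num}_{\varrho v}$ is needed. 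Equivalently, the quantity $f^\mathrm{num}_k := v\,f^\mathrm{num}_{\varrho v} - \tfrac12 v^2 f^\mathrm{num}_\varrho$ is a consistent, symmetric kinetic-energy flux (on the diagonal it equals $\tfrac12\varrho v^3 + pv$) through which the local kinetic energy telescopes, which is exactly the content of kinetic energy preservation.

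The main obstacle is thus conceptual rather than computational: establishing KEP without invoking the very momentum ansatz that defines it. The resolution is the observation above that the obstruction to kinetic-energy conservation is proportional to $\jump{v}$ times the residual $f^\mathrm{num}_{\varrho v} - \mean{v}f^\mathrm{num}_\varrho - \mean{p}$; global constancy of the velocity annihilates the factor $\jump{v}$ and hence the obstruction, regardless of how the momentum flux is chosen. This, together with the term-by-term vanishing of the Tadmor condition, gives both properties for an arbitrary consistent and symmetric flux.
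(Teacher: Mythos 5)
Your proposal is correct, and for the TEC part it coincides with the paper's proof: the paper inserts $p=\text{const}$, $v=\text{const}$ into the expanded TEC condition \eqref{TEC} and observes that every jump vanishes, which is exactly your term-by-term vanishing of Tadmor's condition \eqref{eq:TadmorDef} (jumping the entropy variables rather than the pre-expanded form is a cosmetic difference). Where you genuinely add something is the KEP part: the paper's proof is silent on it, treating it as immediate, whereas you supply an actual argument via the discrete kinetic-energy balance, showing the production term is $\sum_{\text{interfaces}} \jump{v}\bigl(f^\mathrm{num}_{\varrho v} - \mean{v} f^\mathrm{num}_{\varrho}\bigr)$ and hence annihilated by $\jump{v}=0$. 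Your conceptual point is also well taken and worth making explicit: read literally as the structural identity \eqref{kep}, the lemma would be false even under constant $v$ and $p$ --- e.g.\ $f^\mathrm{num}_{\varrho v} = \mean{\varrho}\mean{v}^2 + \mean{p}$ paired with $f^\mathrm{num}_{\varrho} = \mean{\varrho}_{\mathrm{log}}\mean{v}$ gives $f^\mathrm{num}_{\varrho v} - \mean{v}f^\mathrm{num}_{\varrho} - \mean{p} = (\mean{\varrho}-\mean{\varrho}_{\mathrm{log}})v^2 \neq 0$ for $\varrho_i \neq \varrho_{i+1}$ --- so the functional reading (zero spurious kinetic-energy production, with the telescoping flux $f^\mathrm{num}_k = v\,f^\mathrm{num}_{\varrho v} - \tfrac12 v^2 f^\mathrm{num}_{\varrho}$, consistent with $\tfrac12\varrho v^3 + pv$) is the right one, and your proof establishes it cleanly where the paper merely asserts it. The only cosmetic caveat is that your global sum implicitly assumes periodic boundaries or discards boundary terms, but your local telescoping formulation already covers that.
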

\begin{proof}
If $p = \text{const}$, then $\varrho \theta = \text{const}$. Thus, the TEC condition becomes
\begin{equation}
    -\frac{1}{2}\jump{v^2}f^\mathrm{num}_\varrho + \jump{v}f^\mathrm{num}_{\varrho v} + \frac{\gamma K }{\gamma - 1} \jump{(\varrho \theta)^{\gamma -1}} f^\mathrm{num}_{\varrho \theta} - K\jump{(\varrho \theta)^\gamma v} = 0,
\end{equation}
and it is always satisfied for any numerical flux, given $p = \text{const}$ and $v = \text{const}$, since all the jumps vanish.
\end{proof}

\begin{lemma}
\label{ECconstantvp}
For $p = \text{const}$, $v = \text{const}$, an EC and PEP numerical flux $f^\mathrm{num}$ satisfies
\begin{equation}
    f^\mathrm{num}_{\varrho \theta} = f^\mathrm{num}_{\varrho} \mean{\varrho}_{\mathrm{log}}^{-1} \varrho \theta.
\end{equation}
\end{lemma}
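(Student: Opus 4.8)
The plan is to obtain the stated identity directly from the semi-discrete EC condition~\eqref{ECCondition}, specialized to the pressure-equilibrium regime. First I would invoke the equation of state $p = K(\varrho\theta)^\gamma$ to note that $p \equiv \text{const}$ forces the product $\varrho\theta$ to be constant throughout the domain. The crucial observation is that this does \emph{not} force $\varrho$ (nor $\theta$) to be constant, so the jumps appearing in~\eqref{ECCondition} need not vanish. Writing $\varrho\theta$ for this common constant value, the two quantities entering the EC condition simplify: since $\log(\varrho/\varrho\theta) = \log\varrho - \log(\varrho\theta)$ with the second term constant, one has $\jump{\log(\varrho/\varrho\theta)} = \jump{\log\varrho}$; and since $\varrho/\varrho\theta = \varrho/(\varrho\theta)$ with $\varrho\theta$ constant, one has $\jump{\varrho/\varrho\theta} = \jump{\varrho}/(\varrho\theta)$.

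Substituting these two simplifications into~\eqref{ECCondition} reduces the EC condition to $\jump{\log\varrho}\, f^\mathrm{num}_\varrho - (\jump{\varrho}/\varrho\theta)\, f^\mathrm{num}_{\varrho\theta} = 0$. Solving for the potential-temperature flux and invoking the definition of the logarithmic mean, $\mean{\varrho}_{\mathrm{log}} = \jump{\varrho}/\jump{\log\varrho}$, so that $\jump{\log\varrho}/\jump{\varrho} = \mean{\varrho}_{\mathrm{log}}^{-1}$, then yields precisely $f^\mathrm{num}_{\varrho\theta} = f^\mathrm{num}_\varrho\, \mean{\varrho}_{\mathrm{log}}^{-1}\, \varrho\theta$, as claimed.

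I expect the computation to be routine; the only point demanding care is the observation highlighted above, namely that constancy of $p$ (equivalently of $\varrho\theta$) does not trivialize the jumps, because $\varrho$ remains free to vary and hence $1/\theta = \varrho/\varrho\theta$ carries genuine spatial variation. This is exactly what keeps~\eqref{ECCondition} an effective constraint rather than a vacuous $0 = 0$ identity, and it is the step I would be most careful to state explicitly. Concerning the hypotheses, the derivation uses only entropy conservation together with $p \equiv \text{const}$; the PEP assumption is inherited from the ambient setting of Theorem~\ref{theorem1}, where it will be combined with this identity to constrain the admissible density flux (forcing $f^\mathrm{num}_\varrho\,\mean{\varrho}_{\mathrm{log}}^{-1}$ to be constant across interfaces). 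I would make this logical dependence transparent so that it is clear the identity itself rests on the EC condition.
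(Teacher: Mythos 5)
Your proof is correct and takes essentially the same route as the paper, whose entire proof is the remark that the identity ``follows directly from the EC condition~\eqref{ECCondition}, when $p = \text{const}$'': you simply spell out the details, namely that $p \equiv \text{const}$ forces $\varrho\theta \equiv \text{const}$, that the jumps reduce to $\jump{\log\varrho}$ and $\jump{\varrho}/(\varrho\theta)$, and that the definition of the logarithmic mean then gives the stated formula. Your side observations are also accurate, in particular that the PEP (and $v \equiv \text{const}$) hypotheses are not actually used in deriving this identity and are only carried along for the later combination in Theorem~\ref{theorem1}.
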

\begin{proof}
    It follows directly from the EC condition \eqref{ECCondition}, when $p = \text{const}$.
\end{proof}

\begin{lemma}
\label{Lemma55}
    For $p = const$, $v = const$, an EC numerical flux that is also PEP or KEP must be of the form
\begin{equation}
 \left\{
\begin{aligned}
    &f^\mathrm{num}_{\varrho} = \mean{\varrho}_\mathrm{log} v\\
    &f^\mathrm{num}_{\varrho v} = f^\mathrm{num}_{\varrho} v + p\\
    &f^\mathrm{num}_{\varrho \theta} = \varrho \theta v
\end{aligned}
\right.
\label{formecpepkep}
\end{equation}
\end{lemma}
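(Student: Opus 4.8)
The plan is to turn the two hypotheses into explicit algebraic constraints on the three flux components $(f^\mathrm{num}_\varrho, f^\mathrm{num}_{\varrho v}, f^\mathrm{num}_{\varrho\theta})$ and then solve them, the crucial reduction being that $p\equiv\text{const}$ together with the state law $p = K(\varrho\theta)^\gamma$ forces $\varrho\theta\equiv\text{const}$. Writing $c$ for this common value, I would first substitute $\varrho\theta = c$ into the EC condition \eqref{ECCondition}: the jumps collapse to $\jump{\log(\varrho/(\varrho\theta))} = \jump{\log\varrho}$ and $\jump{\varrho/(\varrho\theta)} = \jump{\varrho}/c$, so \eqref{ECCondition} becomes $\jump{\log\varrho}\,f^\mathrm{num}_\varrho = (\jump{\varrho}/c)\,f^\mathrm{num}_{\varrho\theta}$, i.e. $f^\mathrm{num}_{\varrho\theta} = f^\mathrm{num}_\varrho\,\mean{\varrho}_{\mathrm{log}}^{-1}\varrho\theta$ by the definition of the logarithmic mean. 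This is exactly Lemma~\ref{ECconstantvp}, and it serves as the backbone of the argument: it rigidly couples $f^\mathrm{num}_{\varrho\theta}$ to $f^\mathrm{num}_\varrho$, so the whole flux is determined once $f^\mathrm{num}_\varrho$ is known.

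Next I would read off the momentum component, which is common to both hypotheses. For KEP, \eqref{kep} with $\mean{v}=v$ and $\mean{p}=p$ gives $f^\mathrm{num}_{\varrho v} = v\,f^\mathrm{num}_\varrho + p$ at once; for PEP, the additive term $\text{const}(pv)$ in \eqref{PEPproperty} is fixed by consistency---evaluating at coincident states where $f^\mathrm{num}_{\varrho v} = \varrho v^2 + p$ and $f^\mathrm{num}_\varrho = \varrho v$ pins it to $p$. Either way the middle line of \eqref{formecpepkep}, $f^\mathrm{num}_{\varrho v} = f^\mathrm{num}_\varrho v + p$, holds.

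The decisive step is pinning down $f^\mathrm{num}_\varrho$ itself, and here the PEP case is clean. PEP requires $f^\mathrm{num}_{\varrho\theta}$ to depend only on $\varrho\theta$ and $v$; since both are constant, the $\varrho\theta$-flux is constant across the interface, so consistency gives $f^\mathrm{num}_{\varrho\theta} = \varrho\theta v = cv$. Inserting this into the backbone relation $f^\mathrm{num}_{\varrho\theta} = (c/\mean{\varrho}_{\mathrm{log}})\,f^\mathrm{num}_\varrho$ forces $f^\mathrm{num}_\varrho = \mean{\varrho}_{\mathrm{log}}\,v$, and $f^\mathrm{num}_{\varrho\theta} = \varrho\theta v$ is then recovered; combined with the momentum line this is precisely \eqref{formecpepkep}.

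I expect the KEP branch to be the main obstacle. On its own, KEP constrains only the momentum flux, while the backbone relation merely expresses $f^\mathrm{num}_{\varrho\theta}$ through $f^\mathrm{num}_\varrho$ without fixing the latter, so a literal EC-plus-KEP argument still admits any consistent symmetric density flux (arithmetic, geometric, logarithmic, \dots). To close this case I would have to establish the same constancy of the potential-temperature flux, $f^\mathrm{num}_{\varrho\theta} = \varrho\theta v$, that PEP supplies directly, and only then does the substitution into the backbone relation collapse the density flux to $\mean{\varrho}_{\mathrm{log}}\,v$. Identifying precisely the minimal extra ingredient that the KEP hypothesis must be paired with to force this constancy is the delicate point I would settle before asserting uniqueness of the form \eqref{formecpepkep}.
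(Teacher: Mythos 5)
Your PEP branch is correct and is essentially the paper's own route, made rigorous. The paper's proof is a single sentence --- ``comparing the EC condition with the PEP or KEP, the density flux is independent of the potential temperature'' --- and what it compresses is exactly your chain: $p\equiv\text{const}$ forces $\varrho\theta\equiv\text{const}$, the EC condition \eqref{ECCondition} collapses to the backbone relation
\begin{equation}
  f^\mathrm{num}_{\varrho\theta} \;=\; \frac{\varrho\theta}{\mean{\varrho}_\mathrm{log}}\, f^\mathrm{num}_{\varrho}
\end{equation}
(Lemma~\ref{ECconstantvp}), PEP makes $f^\mathrm{num}_{\varrho\theta}$ spatially constant and consistency pins it to $\varrho\theta v$, which then forces $f^\mathrm{num}_{\varrho} = \mean{\varrho}_\mathrm{log} v$; the momentum line and the identification $\text{const}(pv)=p$ follow from consistency as you argue. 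Your write-up is in fact more complete than the paper's.

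Your refusal to assert the KEP branch is also justified, and you should not look for a ``minimal extra ingredient'' to close it: as literally stated, ``EC and KEP implies \eqref{formecpepkep}'' is false. KEP \eqref{kep} constrains only the momentum component, so an EC and KEP flux may carry \emph{any} consistent symmetric density flux. The paper itself supplies the counterexample: the EC flux \eqref{ECflux} with the arithmetic density mean $f^\mathrm{num}_{\varrho} = \mean{\varrho}\mean{v}$ is EC and KEP by construction (cf.\ Table~\ref{summary_fluxes}), yet restricted to $p,v\equiv\text{const}$ it gives $f^\mathrm{num}_{\varrho} = \mean{\varrho}\, v \neq \mean{\varrho}_\mathrm{log}\, v$ and $f^\mathrm{num}_{\varrho\theta} = \bigl(\mean{\varrho}/\mean{\varrho}_\mathrm{log}\bigr)\,\varrho\theta\, v \neq \varrho\theta\, v$, violating \eqref{formecpepkep}. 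This is consistent with the paper's own later use of the lemma, which invokes only the PEP branch (``an EC numerical flux with the arithmetic density mean is not PEP'' in the density-wave discussion); the ``or KEP'' in the statement can only be read as applying to the momentum line $f^\mathrm{num}_{\varrho v} = f^\mathrm{num}_{\varrho} v + p$, which, as you show, follows from either hypothesis. In short: the gap you flagged is real, but it sits in the paper's statement and one-line proof, not in your argument.
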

\begin{proof}
    Comparing the EC condition \eqref{ECCondition} with the PEP \eqref{PEPproperty} or KEP \eqref{kep}, the density flux $f^\mathrm{num}_{\varrho}$ is independent of the potential temperature, hence
\begin{equation}
    f^\mathrm{num}_{\varrho} = \mean{\varrho}_\mathrm{log} v,
\end{equation}
and inserting the EC and KEP property the numerical flux has to be of the form \eqref{formecpepkep}.
\end{proof}
Note that since for $p = \text{const}$ and $v = \text{const}$, TEC requires only a consistent and symmetric numerical flux, these numerical fluxes also satisfy the TEC condition \eqref{TEC}.

\begin{lemma}
     For $v = \text{const}$, an EC and PEP numerical flux for which the density flux does not depend on the pressure must be of the form
\begin{equation}
 \left\{
\begin{aligned}
    &f^\mathrm{num}_{\varrho} = \mean{\varrho}_\mathrm{log} v\\
    &f^\mathrm{num}_{\varrho v} = f^\mathrm{num}_{\varrho} v + \mean{p}\\
    &f^\mathrm{num}_{\varrho \theta} = \frac{\mean{\varrho}_\mathrm{log}}{\mean{1 / \theta}_\mathrm{log}} v
\end{aligned}
\right.
\end{equation}
\end{lemma}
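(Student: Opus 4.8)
The plan is to fix the three flux components in turn, using that the entropy condition~\eqref{ECCondition} ties $f^\mathrm{num}_{\varrho\theta}$ to $f^\mathrm{num}_\varrho$, that the hypothesis ``density flux independent of pressure'' removes all $\varrho\theta$-dependence from $f^\mathrm{num}_\varrho$, and that PEP pins down the flux on the pressure-equilibrium states already treated in Lemma~\ref{Lemma55}. Since $v\equiv\mathrm{const}$, a consistent symmetric flux depends on the left and right states only through $\varrho_L,\varrho_R,(\varrho\theta)_L,(\varrho\theta)_R$ and the common value $v$; invoking pressure-independence, equivalently independence of $\varrho\theta$ because $p=K(\varrho\theta)^\gamma$, reduces the density flux to $f^\mathrm{num}_\varrho=f^\mathrm{num}_\varrho(\varrho_L,\varrho_R,v)$.

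The key step is to show $f^\mathrm{num}_\varrho=\mean{\varrho}_\mathrm{log}\,v$. First I would restrict attention to the sub-family of states on which $p$ is \emph{also} constant. There the flux is simultaneously EC and PEP, so Lemma~\ref{Lemma55} forces $f^\mathrm{num}_\varrho=\mean{\varrho}_\mathrm{log}\,v$. The crucial observation is that on this slice the densities $\varrho_L,\varrho_R$ are still free over $(0,\infty)^2$: fixing $p$ only fixes the common value of $\varrho\theta$, whence $\theta_{L/R}=\varrho\theta/\varrho_{L/R}$. Hence the identity holds for every admissible pair $(\varrho_L,\varrho_R)$, and because $f^\mathrm{num}_\varrho$ does not depend on $\varrho\theta$ by hypothesis, the same identity persists for all states with velocity $v$, not only the equilibrium ones.

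With $f^\mathrm{num}_\varrho=\mean{\varrho}_\mathrm{log}\,v$ fixed, the remaining components follow. Solving~\eqref{ECCondition} for $f^\mathrm{num}_{\varrho\theta}$ and using $\varrho/(\varrho\theta)=1/\theta$ together with the definition of the logarithmic mean, $\mean{1/\theta}_\mathrm{log}=\jump{1/\theta}/\jump{\log(1/\theta)}$, yields $f^\mathrm{num}_{\varrho\theta}=f^\mathrm{num}_\varrho/\mean{1/\theta}_\mathrm{log}=(\mean{\varrho}_\mathrm{log}/\mean{1/\theta}_\mathrm{log})\,v$, exactly the claimed expression, and one checks it is consistent, tending to $\varrho\theta v$. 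For the momentum flux I would note that the second entropy variable vanishes, so~\eqref{ECCondition} places no constraint on $f^\mathrm{num}_{\varrho v}$; the only requirement is PEP, whose first relation in~\eqref{PEPproperty} with $v$ constant gives $f^\mathrm{num}_{\varrho v}=v\,f^\mathrm{num}_\varrho+\mathrm{const}(pv)$, and consistency identifies the additive term with a symmetric, consistent pressure discretization, namely $\mean{p}$.

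The main obstacle is the propagation argument in the density-flux step: one must justify that a relation established only on the pressure-equilibrium slice extends to \emph{all} admissible states. This works precisely because pressure-independence collapses the dependence of $f^\mathrm{num}_\varrho$ onto $(\varrho_L,\varrho_R,v)$, and these arguments already range freely on that slice, so no genuine extrapolation is needed. I would also flag that the momentum flux is determined only up to the choice of a symmetric consistent pressure term; writing $\mean{p}$ is the canonical, KEP-compatible representative, so the uniqueness asserted in the statement is really carried by $f^\mathrm{num}_\varrho$ and $f^\mathrm{num}_{\varrho\theta}$.
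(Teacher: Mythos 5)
Your proof is correct and follows essentially the same route as the paper's: Lemma~\ref{Lemma55} pins down $f^\mathrm{num}_\varrho$ on the constant-pressure slice, pressure-independence of the density flux extends this to all states with $v$ constant, the EC condition~\eqref{ECCondition} then yields $f^\mathrm{num}_{\varrho\theta} = f^\mathrm{num}_\varrho / \mean{1/\theta}_\mathrm{log}$, and a consistent symmetric pressure term closes the momentum flux. Your two refinements --- spelling out why the identity established on the constant-$p$ slice propagates to all states (the densities already range freely there, and pressure-independence collapses $f^\mathrm{num}_\varrho$ to a function of $(\varrho_\pm, v)$ alone), and flagging that $\mean{p}$ is only the canonical consistent choice rather than forced by EC and PEP (the paper pins it by invoking KEP, which is not among the lemma's stated hypotheses) --- are gains in rigor within the same argument, not a different approach.
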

\begin{proof}
   Due to Lemma~\ref{Lemma55}, the general form of the dependencies on $\varrho$ for $p \equiv \text{const}$ is already determined.  The remaining degree of freedom for non-constant pressure $p$ can be described by two functions $\varphi_{1,2}$, resulting in the numerical fluxes
\begin{equation}
 \left\{
\begin{aligned}
    &f^\mathrm{num}_{\varrho} = \mean{\varrho}_\mathrm{log} v\\
    &f^\mathrm{num}_{\varrho v} = f^\mathrm{num}_{\varrho} v + \varphi_1(\varrho_\pm, p_\pm)\\
    &f^\mathrm{num}_{\varrho \theta} = \varphi_2(\varrho_\pm, \varrho \theta_\pm) v
\end{aligned}
\right.
\label{fluxesphi}
\end{equation}
where $\varphi_1(\varrho_\pm, p_\pm)$ and $\varphi_2(\varrho_\pm, \varrho \theta_\pm)$ are some kind of mean values depending on $\varrho_\pm$, $p_\pm$, $\varrho \theta_\pm$ such that
\begin{equation}
\begin{aligned}
    \forall \varrho_\pm, p > 0&\colon&  \varphi_{1}(\varrho_+, \varrho_-, p, p) &= p, \\
    \forall \varrho_\pm, \varrho \theta > 0&\colon&  \varphi_{2}(\varrho_+, \varrho_-, \varrho \theta, \varrho \theta) &= \varrho \theta.
\end{aligned}
\label{conditionphi}
\end{equation}
\revB{The function $\varphi_1$ is not constrained by the EC condition~\eqref{ECCondition}. Thus, we adopt the form given by the} KEP condition, $ \varphi_1(\varrho_\pm, p)= \mean{p}$. Substituting the fluxes \eqref{fluxesphi} into the EC condition \eqref{ECCondition},
\begin{equation}
    \varphi_2(\varrho_\pm, \varrho \theta_\pm) = \frac{\mean{\varrho}_\mathrm{log}}{\mean{1 / \theta}_\mathrm{log}},
\end{equation}
which satisfies the condition \eqref{conditionphi}.
\end{proof}
\begin{lemma}
\label{lemma57}
    For $v = \text{const}$, a TEC flux for the potential temperature must be of the form
\begin{equation}
    f_{\varrho \theta}^{\mathrm{num}} = \mean{\varrho \theta}_\gamma v.
    \label{formTECv}
\end{equation}
\end{lemma}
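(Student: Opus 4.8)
The plan is to derive the claim directly from the TEC condition~\eqref{TEC} by specializing it to a constant velocity field, and the crucial structural observation is that this specialization collapses a single scalar equation in three unknown flux components into a single equation in the one unknown $f^\mathrm{num}_{\varrho\theta}$. Concretely, for $v \equiv \text{const}$ the ratio $m/\varrho = v$ is spatially constant, so that $\jump{m/\varrho} = \jump{v} = 0$ and $\jump{m^2/\varrho^2} = \jump{v^2} = 0$. Substituting these into~\eqref{TEC} annihilates both the density-flux term and the momentum-flux term, regardless of the (consistent, symmetric) choices made for $f^\mathrm{num}_\varrho$ and $f^\mathrm{num}_{\varrho v}$. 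This is precisely why one obtains a genuine necessity (``must be of the form'') statement here, whereas in the general case the single TEC condition underdetermines the fluxes and only yields a sufficient family via coefficient matching.

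After this reduction, what remains of~\eqref{TEC} is the scalar identity
\begin{equation}
    \frac{\gamma}{\gamma-1} K \jump{(\varrho\theta)^{\gamma-1}}\, f^\mathrm{num}_{\varrho\theta} = K \jump{(\varrho\theta)^\gamma \frac{m}{\varrho}}.
\end{equation}
Since $m/\varrho = v$ is constant it factors out of the jump on the right-hand side, giving $\jump{(\varrho\theta)^\gamma v} = v\,\jump{(\varrho\theta)^\gamma}$. Cancelling $K$ and solving for the potential-temperature flux then yields
\begin{equation}
    f^\mathrm{num}_{\varrho\theta} = \frac{\gamma-1}{\gamma}\, \frac{\jump{(\varrho\theta)^\gamma}}{\jump{(\varrho\theta)^{\gamma-1}}}\, v .
\end{equation}
I would close the argument by recognizing the fraction on the right as the Stolarsky mean $\mean{\varrho\theta}_\gamma$ defined in~\eqref{properties_discrete_mean} (with $a = \varrho\theta$), which immediately gives the claimed form $f^\mathrm{num}_{\varrho\theta} = \mean{\varrho\theta}_\gamma v$ stated in~\eqref{formTECv}.

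The only point requiring care --- and the nearest thing to an obstacle --- is the division by $\jump{(\varrho\theta)^{\gamma-1}}$, which is legitimate only when $(\varrho\theta)_i \neq (\varrho\theta)_{i+1}$. In the degenerate case $\jump{\varrho\theta} = 0$ every jump in~\eqref{TEC} vanishes identically, so the TEC condition places no constraint on $f^\mathrm{num}_{\varrho\theta}$ at that interface; here one instead invokes consistency of the numerical flux, $f^\mathrm{num}_{\varrho\theta}(\vec{u},\vec{u}) = \varrho\theta\, v$, together with the consistency of the Stolarsky mean, $\lim_{b \to a}\mean{a}_\gamma = a$, so that the stated formula extends continuously and holds in all cases. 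My plan would be to carry out the nondegenerate computation in the main body and dispatch the limiting case with a single remark on consistency.
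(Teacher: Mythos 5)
Your proof is correct and follows essentially the same route as the paper: specialize the TEC condition~\eqref{TEC} to $v \equiv \text{const}$ (so the terms multiplying $f^\mathrm{num}_{\varrho}$ and $f^\mathrm{num}_{\varrho v}$ drop out), solve the remaining scalar equation for $f^\mathrm{num}_{\varrho\theta}$, and recognize the Stolarsky mean $\mean{\varrho\theta}_\gamma$. The only difference is presentational: the paper posits the consistent-mean ansatz $f_{\varrho\theta} = \varphi_2(\varrho_\pm,\varrho\theta_\pm)\,v$ up front, which implicitly covers the degenerate interface $\jump{\varrho\theta}=0$, whereas you derive the formula directly and dispatch that degenerate case explicitly via consistency and the limit of the Stolarsky mean --- a slightly more careful rendering of the same argument.
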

\begin{proof}
The potential temperature numerical flux can be described for constant velocities as
\begin{equation}
    f_{\varrho \theta}^{\mathrm{num}} = \varphi_2(\varrho_\pm, \varrho \theta_\pm) v,
\end{equation}
where $\varphi_2$ is some kind of mean values depending on $\varrho_\pm$, $\varrho \theta_\pm$ such that
\begin{equation}
  \forall \varrho_\pm, \varrho \theta > 0\colon \qquad  \varphi_{2}(\varrho_+, \varrho_-, \varrho \theta, \varrho \theta) = \varrho \theta.
\end{equation}
    The TEC condition \eqref{TEC} for $v = \text{const}$ becomes
\begin{equation}
    \frac{\gamma K }{\gamma - 1} \jump{(\varrho \theta)^{\gamma -1}} f_{\varrho \theta} - K\jump{(\varrho \theta)^\gamma} v = 0,
\end{equation}
which results in the final form \eqref{formTECv}.
\end{proof}

Thus, for $v = \text{const}$ a numerical flux that is PEP, EC, TEC, and has a density flux that does not depend on the pressure cannot be constructed, since in the relation
\begin{equation}
    \mean{\varrho \theta}_\gamma = \frac{\mean{\varrho}_\mathrm{log}}{\mean{1/\theta}_\mathrm{log}},
\end{equation}
for a fixed $\gamma$, there exists at least one tuple $(\varrho_\pm, \theta_\pm)$ such that the relation above is not satisfied.
We showed that a numerical flux that is EC, PEP, TEC, and does not have the influence of the pressure in the density flux cannot exist for $v = \text{const}$. Thus, we are ready to introduce the dependence of the pressure in the density flux.

\begin{lemma}
\label{lemma58}
    For $v = \text{const}$, a numerical flux that is PEP, EC, and TEC must be of the form
\begin{equation}
 \left\{
\begin{aligned}
    &f^\mathrm{num}_{\varrho} =  \mean{\varrho}_\gamma \mean{1 / \theta}_{\mathrm{log}}v\\
    &f^\mathrm{num}_{\varrho v} = f^{\mathrm{num}}_{\varrho} v + \mean{p}\\
    & f^\mathrm{num}_{\varrho \theta} = \mean{\varrho}_\gamma v
\end{aligned}
\right.
\label{tecformv}
\end{equation}
with the influence of the pressure in the numerical density flux.
\end{lemma}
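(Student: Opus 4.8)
The plan is to pin down the three components of $f^\mathrm{num}$ one at a time, using a different one of the three assumed properties for each, and then to recognise the result as the ETEC flux \eqref{ETECflux} specialised to $v = \text{const}$. So the novelty of the argument is not a single computation but the bookkeeping: which property controls which component, and where the earlier impossibility result forces us to accept pressure in the density flux.

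First I would fix the potential-temperature flux. Since $f^\mathrm{num}$ is TEC and $v = \text{const}$, Lemma~\ref{lemma57} applies verbatim and forces
\[
    f^\mathrm{num}_{\varrho\theta} = \mean{\varrho\theta}_\gamma\, v,
\]
with no remaining freedom. Feeding this into the EC condition \eqref{ECCondition} then yields the density flux. Writing $\varrho/(\varrho\theta) = 1/\theta$, that condition reads $\jump{\log(1/\theta)}\,f^\mathrm{num}_\varrho = \jump{1/\theta}\,f^\mathrm{num}_{\varrho\theta}$, so that, away from the degenerate case $\theta_+ = \theta_-$ (covered by the limiting definition of the logarithmic mean),
\[
    f^\mathrm{num}_\varrho = \frac{\jump{1/\theta}}{\jump{\log(1/\theta)}}\,f^\mathrm{num}_{\varrho\theta} = \mean{1/\theta}_{\mathrm{log}}\,f^\mathrm{num}_{\varrho\theta} = \mean{\varrho\theta}_\gamma \mean{1/\theta}_{\mathrm{log}}\, v .
\]
Because $\theta$ is tied to the pressure through $p = K(\varrho\theta)^\gamma$, the factor $\mean{1/\theta}_{\mathrm{log}}$ shows the density flux must depend on $p$; this is exactly the effect that the preceding non-existence argument identified as unavoidable once EC, TEC, and PEP are imposed together, so here it appears as a consequence rather than an obstruction.

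The only component still open is the momentum flux, and this is the step I expect to require the most care. Neither the EC condition \eqref{ECCondition} nor the TEC condition \eqref{TEC} constrains $f^\mathrm{num}_{\varrho v}$ when $v = \text{const}$, since the coefficient $\jump{v}$ multiplying $f^\mathrm{num}_{\varrho v}$ vanishes; and the PEP property \eqref{PEPproperty} controls the momentum flux only in the fully balanced slice $p = \text{const}$, where consistency merely forces the pressure term to reduce to the common value $p$. Pinning that term to $\mean{p}$ across the entire $v = \text{const}$ family therefore relies on the KEP-form momentum flux carried through the earlier lemmas (cf.\ the choice $\varphi_1 = \mean{p}$ made there under KEP), giving $f^\mathrm{num}_{\varrho v} = f^\mathrm{num}_\varrho\, v + \mean{p}$. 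Collecting the three components then reproduces the asserted form, i.e.\ the ETEC flux \eqref{ETECflux} evaluated at $v = \text{const}$, and a closing consistency check (each flux collapses to the physical flux when the two states coincide) finishes the argument.
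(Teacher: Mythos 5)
Your proof is correct and follows essentially the same route as the paper: Lemma~\ref{lemma57} fixes $f^\mathrm{num}_{\varrho\theta} = \mean{\varrho\theta}_\gamma v$, the EC condition \eqref{ECCondition} then yields $f^\mathrm{num}_{\varrho} = \mean{1/\theta}_{\mathrm{log}} f^\mathrm{num}_{\varrho\theta}$, and the momentum flux is handled last.

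The one place you genuinely diverge is the momentum flux, and your treatment is the more careful one. The paper's proof parametrizes the momentum flux in KEP form, $f^\mathrm{num}_{\varrho v} = f^\mathrm{num}_{\varrho} v + \varphi_1(\varrho_\pm, p_\pm)$ as in \eqref{fluxesphi_tec}, and asserts that substitution into EC \eqref{ECCondition} and PEP \eqref{PEPproperty} produces \eqref{tecformv}; but, as you observe, PEP only constrains the flux on the slice $p \equiv \text{const}$, where \emph{any} consistent symmetric mean $\varphi_1$ passes, so PEP, EC, and TEC alone cannot single out the arithmetic mean $\mean{p}$. The choice $\varphi_1 = \mean{p}$ is in fact carried over from the KEP condition \eqref{kep} used in the preceding lemmas, exactly as you say; this reading is also consistent with how Lemma~\ref{lemma58} is invoked in the proof of Theorem~\ref{theorem1}, whose hypothesis list does include KEP. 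Finally, your $\mean{\varrho\theta}_\gamma$ (rather than the $\mean{\varrho}_\gamma$ appearing in \eqref{tecformv}) is the form consistent with Lemma~\ref{lemma57}, with flux consistency, and with the ETEC flux \eqref{ETECflux}, so your version corrects what appears to be a notational slip in the statement rather than introducing an error.
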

\begin{proof}
   The form of the potential temperature flux has already been determined by Lemma~\ref{lemma57}. The remaining dependencies in the density and momentum flux can be again described by the functions $\varphi_{1,2}$  resulting in the numerical fluxes
\begin{equation}
 \left\{
\begin{aligned}
    &f^\mathrm{num}_{\varrho} = \varphi_{2}(\varrho_\pm, \varrho \theta_\pm) v\\
    &f^\mathrm{num}_{\varrho v} = f^\mathrm{num}_{\varrho} v + \varphi_1(\varrho_\pm, p_\pm)\\
    &f^\mathrm{num}_{\varrho \theta} = \frac{\gamma}{\gamma-1} \frac{\jump{(\varrho \theta)^\gamma}}{\jump{(\varrho \theta)^{\gamma-1}}}v
\end{aligned}
\right.
\label{fluxesphi_tec}
\end{equation}
where $\varphi_1(\varrho_\pm, p_\pm)$ and $\varphi_2(\varrho_\pm, \varrho \theta_\pm)$ are some kind of mean values depending on $\varrho_\pm$, $p_\pm$, $\varrho \theta_\pm$ such that
\begin{equation}
\begin{aligned}
    \forall \varrho_\pm, p > 0&\colon& \varphi_{1}(\varrho_+, \varrho_-, p, p) &= p, \\
    \forall \varrho_\pm, \varrho \theta > 0&\colon&  \varphi_{2}(\varrho_+, \varrho_-, \varrho \theta, \varrho \theta) &= \varrho.
\end{aligned}
\label{conditionphi2}
\end{equation}
The final form \eqref{tecformv} is given by the substituting the numerical fluxes \eqref{fluxesphi_tec} into the conditions EC \eqref{ECCondition} and PEP \eqref{PEPproperty}.
\end{proof}

We can now prove Theorem~\ref{theorem1}.
\begin{proof}[Proof of Theorem~\ref{theorem1}]
The EC \eqref{ECCondition} and TEC \eqref{TEC} conditions are verified by substituting the numerical fluxes into \eqref{ECCondition} and \eqref{TEC}. The KEP \eqref{kep} property is verified by construction and the characterization for $v = \text{const}$ has been proven in Lemma~\ref{lemma58}.
\end{proof}

The characterization of the numerical fluxes for the compressible Euler equations with potential temperature as first invariant has been completed. The numerical fluxes are characterized by the influence of the pressure in the density flux, which is necessary to satisfy the EC, TEC, and PEP properties. In the following lemma we show the characterization for $ p = \text{const}$.
\begin{lemma}
    For fixed $p = \text{const}$, a (symmetric) EC, TEC, KEP, and PEP numerical flux must be of the form
    \begin{equation}
 \left\{
\begin{aligned}
    &f^\mathrm{num}_{\varrho} =  \mean{\varrho}_\gamma \mean{v} + \chi (\varrho_\pm, v_\pm) \\
    &f^\mathrm{num}_{\varrho v} = f^\mathrm{num}_{\varrho} \mean{v} + \mean{p} + \mean{v}\chi (\varrho_\pm, v_\pm)\\
    &f^\mathrm{num}_{\varrho \theta} = \varrho \theta \mean{v} + \frac{\varrho \theta}{\mean{\varrho}_\mathrm{log}}\chi (\varrho_\pm, v_\pm)
\end{aligned}
\right.
\end{equation}
where $\chi$ is a function depending on $\varrho_\pm, v_\pm$ such that $\forall \varrho_\pm, v:$ $\chi(\varrho_+, \varrho_-, v, v) = 0$.
\end{lemma}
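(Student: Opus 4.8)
The plan is to mimic the proofs of Lemmas~\ref{TECconstantvp}--\ref{lemma58}, but now freezing $p$ (equivalently $\varrho\theta$) while letting $v$ and $\varrho$ vary, and to let the already-derived reductions of the EC and TEC conditions carry most of the work. First I would record that $p = K(\varrho\theta)^\gamma \equiv \text{const}$ is equivalent to $\varrho\theta \equiv \text{const}$, so every jump of a power of $\varrho\theta$ vanishes and $\jump{(\varrho\theta)^\gamma v} = p\,\jump{v}$. Substituting this into the TEC condition~\eqref{TEC} and using $\jump{v^2} = 2\mean{v}\jump{v}$ from~\eqref{discrete_chain_rules} collapses~\eqref{TEC}, after cancelling $\jump{v}$, to $f^\mathrm{num}_{\varrho v} = \mean{v} f^\mathrm{num}_{\varrho} + \mean{p}$, which is exactly the KEP identity~\eqref{kep}. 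Thus, with $p$ fixed, TEC and KEP impose the very same constraint on the momentum flux (consistent with Lemma~\ref{TECconstantvp}), so the momentum line of the asserted form follows immediately once the density flux is known.

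Next I would reduce the EC condition. With $\varrho\theta$ constant, \eqref{ECCondition} is precisely the relation of Lemma~\ref{ECconstantvp}, i.e.\ $f^\mathrm{num}_{\varrho\theta} = \mean{\varrho}_{\mathrm{log}}^{-1}\,\varrho\theta\, f^\mathrm{num}_{\varrho}$, so the potential-temperature flux is entirely slaved to the density flux. Consequently the whole numerical flux is determined by $f^\mathrm{num}_\varrho$ together with the KEP relation, and the $\varrho\theta$-line of the claimed form will appear automatically once $f^\mathrm{num}_\varrho = (\text{base}) + \chi$ is inserted into this EC relation, which is exactly why the $\chi$-coefficient in $f^\mathrm{num}_{\varrho\theta}$ carries the factor $\varrho\theta/\mean{\varrho}_{\mathrm{log}}$.

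To fix the base density flux I would invoke PEP~\eqref{PEPproperty}, which only bites in the sub-case $v \equiv \text{const}$. There PEP forces $f^\mathrm{num}_{\varrho\theta}$ to be independent of the still-varying $\varrho_\pm$, hence $f^\mathrm{num}_{\varrho\theta} = \varrho\theta\, v$ by consistency; pushing this back through the EC relation yields $f^\mathrm{num}_\varrho = \mean{\varrho}_{\mathrm{log}}\, v$ whenever $v$ is constant. Writing the general density flux as $f^\mathrm{num}_\varrho = \mean{\varrho}_{\mathrm{log}}\mean{v} + \chi(\varrho_\pm, v_\pm)$, all remaining freedom for non-constant velocity is carried by $\chi$, and the requirement that the $v$-constant value collapse to $\mean{\varrho}_{\mathrm{log}}\, v$ forces precisely $\chi(\varrho_+,\varrho_-,v,v) = 0$. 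Inserting this density flux into the KEP relation and into the slaved EC relation then reproduces the momentum and potential-temperature lines of the stated form, completing the characterization.

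I expect the main obstacle to be the bookkeeping of which conditions are active for constant versus non-constant $v$: PEP genuinely constrains only $v \equiv \text{const}$, so the delicate point is arguing that this single constraint, together with EC and consistency, suffices to pin the $\mean{v}$-behaviour of the base flux and to isolate the one-parameter family parametrized by $\chi$, leaving no hidden freedom in the momentum or $\varrho\theta$ fluxes. A secondary technical care is verifying that the reductions obtained by dividing~\eqref{TEC} by $\jump{v}$ and~\eqref{ECCondition} by $\jump{\varrho}$ extend by continuity to coincident states, which is what legitimises the stated consistency condition $\chi(\varrho_+,\varrho_-,v,v)=0$.
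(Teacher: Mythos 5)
Your proof is correct and follows essentially the same strategy as the paper's: write the flux as the constant-$p$ specialization of the ETEC flux plus a perturbation $\chi$ consistent with zero, let KEP fix the momentum component, observe that TEC imposes nothing further (you make this explicit by showing that for $p=\text{const}$ the TEC condition~\eqref{TEC} collapses, after cancelling $\jump{v}$, to the KEP identity~\eqref{kep}, whereas the paper only verifies it by substitution), let EC slave $f^\mathrm{num}_{\varrho\theta}$ to $f^\mathrm{num}_\varrho$, and let PEP force $\chi$ to vanish at coincident velocities. Your explicit attention to the continuity extension when dividing by $\jump{v}$ or $\jump{\varrho}$ is a point the paper passes over silently.

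One substantive discrepancy deserves emphasis: your base density flux is $\mean{\varrho}_{\mathrm{log}}\mean{v}$, whereas the lemma as printed has $\mean{\varrho}_\gamma\mean{v}$. Your version is the internally consistent one, and the printed Stolarsky mean appears to be a slip. Indeed, at constant $p$ the ETEC density flux~\eqref{ETECflux} reduces to
\begin{equation}
    f^\mathrm{num}_\varrho
    = \mean{\varrho\theta}_\gamma \mean{v}\, \mean{1/\theta}_{\mathrm{log}}
    = \varrho\theta\,\mean{v}\,\frac{\mean{\varrho}_{\mathrm{log}}}{\varrho\theta}
    = \mean{\varrho}_{\mathrm{log}}\mean{v},
\end{equation}
and restricting the lemma's claimed form to $v_+ = v_-$ with the condition $\chi(\varrho_+,\varrho_-,v,v)=0$ would give $f^\mathrm{num}_\varrho = \mean{\varrho}_\gamma v$, contradicting Lemma~\ref{Lemma55}, which forces $f^\mathrm{num}_\varrho = \mean{\varrho}_\mathrm{log} v$ in that regime. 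Moreover, only the logarithmic-mean base is compatible with the lemma's own third line via the EC slaving $f^\mathrm{num}_{\varrho\theta} = \bigl(\varrho\theta/\mean{\varrho}_{\mathrm{log}}\bigr) f^\mathrm{num}_\varrho$, and the paper's own proof implicitly uses it when concluding $\chi_{\varrho\theta} = \bigl(\varrho\theta/\mean{\varrho}_{\mathrm{log}}\bigr)\chi_\varrho$. So your blind derivation does not miss anything; it reproduces the paper's argument and in effect corrects the statement.
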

\begin{proof}
    The proof is analogous to the one in \cite{Ranocha2022}. Indeed, due to consistency, a numerical flux can always be written as the sum of a given numerical flux and a perturbation $\chi$ that is consistent with zero. Therefore, we add the perturbation $\chi$ to the numerical fluxes from Theorem \ref{theorem1}, considering that $p = \text{const}$, so that
\begin{equation} \left\{
\begin{aligned}
    &f^\mathrm{num}_{\varrho} =  \mean{\varrho}_\gamma \mean{v} + \chi_\varrho (\varrho_\pm, v_\pm)\\
    &f^\mathrm{num}_{\varrho v} = f^\mathrm{num}_{\varrho} \mean{v} + \mean{p} + \chi_{\varrho v} (\varrho_\pm, v_\pm)\\
    &f^\mathrm{num}_{\varrho \theta} = \varrho \theta \mean{v} + \chi_{\varrho \theta} (\varrho_\pm, v_\pm)
\end{aligned}
\right.
\label{ETECperturbed}
\end{equation}
where $\forall \varrho, v\colon \chi_\varrho(\varrho_+, \varrho_-, v, v) = 0$, $\chi_{\varrho v}(\varrho_+, \varrho_-, v, v) = 0$ and $\chi_{\varrho \theta}(\varrho_+, \varrho_-, v, v) = 0$. The perturbation $\chi$ is then chosen such that the EC, TEC, KEP, and PEP conditions are satisfied. The KEP property leads to the condition $\chi_{\varrho v} = \mean{v} \chi_\varrho$. The TEC condition~\eqref{TEC} for $p = \text{const}$ yields
\begin{multline}
        -\frac{1}{2}\jump{\frac{m^2}{\varrho^2}} \left( \frac{\gamma - 1}{\gamma}\frac{\jump{\varrho^\gamma}}{\jump{\varrho^{\gamma - 1}}}  \mean{v} + \chi_\varrho (\varrho_\pm, v_\pm) \right) \\
        + \jump{\frac{m}{\varrho}} \left ( \frac{\gamma - 1}{\gamma}\frac{\jump{\varrho^\gamma}}{\jump{\varrho^{\gamma - 1}}}  \mean{v}^2 + p + \chi_\varrho (\varrho_\pm, v_\pm) \mean{v}  \right ) - p \jump{v}= 0,
\end{multline}
    which is always satisfied, for any perturbation $\chi_\varrho$. The EC condition~\eqref{ECCondition} leads to
\begin{equation}
   \chi_{\varrho \theta} = \frac{\varrho \theta}{\mean{\varrho}_\mathrm{log}} \chi_{\varrho},
\end{equation}
which is satisfied for any perturbation $\chi_\varrho$ that satisfies the condition $\forall \varrho, v\colon \chi_\varrho(\varrho_+, \varrho_-, v, v) = 0$. The PEP is satisfied if and only if the perturbation term is such that $\forall \varrho, v: \chi_\varrho(\varrho_+, \varrho_-, v, v) = 0$.
\end{proof}
The extension of the numerical fluxes~\eqref{ETECperturbed} to an arbitrary non-constant pressure is straightforward and due to the form of the fluxes in Theorem~\ref{theorem1}, the numerical flux is unique also for general velocities.

\subsection{On the pressure positivity for $(\varrho, \varrho v, \varrho \theta)$}
Density and pressure must be positive to avoid nonphysical numerical solutions and eventually instabilities. The numerical fluxes should preserve this property when an explicit Euler time marching method is applied to a first-order FV method. This is a well-known and important property, since SSP Runge-Kutta methods can be written as a convex combination of Euler steps method \cite{Gottliebs2011}. Therefore, given the convex set of admissible states
\begin{equation}
    G = \left \{ \vec{u} = \left (\varrho, \varrho v, \varrho \theta \right ) | \varrho > 0, p = K(\varrho \theta)^\gamma > 0 \right \},
\end{equation}
positivity of density and pressure is inherited by any convex combination of Euler steps, as soon as it is preserved by the internal Euler steps.
Note that the density positivity remains unchanged with respect to the results already described in \cite{Ranocha2018}. However, the advantage of the potential temperature formulation results in a straightforward way to preserve the pressure positivity.
\begin{corollary}
    If the numerical potential temperature flux $f^\mathrm{num}_{\varrho \theta} = \mean{\varrho \theta}_\gamma \mean{v} - \frac{\lambda}{2}\jump{\varrho \theta}$ is used with $\lambda \geq \max \{|v_i|, |v_{i+1}| \}$, the first-order FV scheme preserves the non-negativity of the pressure \revB{$p$} under the CFL condition
\begin{equation}
    \Delta t \leq \frac{\Delta x}{2 \lambda}.
\end{equation}
\end{corollary}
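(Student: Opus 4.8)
The plan is to reduce pressure positivity to the non-negativity of the conserved quantity $\varrho\theta$. Since $p = K(\varrho\theta)^\gamma$ with $K,\gamma>0$, one has $p_i^{n+1}\ge 0$ if and only if $(\varrho\theta)_i^{n+1}\ge 0$, so it suffices to track the third component of the explicit-Euler FV update. Writing $q=\varrho\theta$ and $\mu=\Delta t/\Delta x$, and treating the nodal velocities at time level $n$ as frozen data, the update is
\begin{equation*}
  q_i^{n+1} = q_i - \mu\bigl( f^{\mathrm{num}}_{\varrho\theta}(\vec{u}_i,\vec{u}_{i+1}) - f^{\mathrm{num}}_{\varrho\theta}(\vec{u}_{i-1},\vec{u}_i)\bigr),
  \qquad
  f^{\mathrm{num}}_{\varrho\theta} = \mean{\varrho\theta}_\gamma\mean{v} - \tfrac{\lambda}{2}\jump{\varrho\theta}.
\end{equation*}
First I would split the Rusanov dissipation into a discrete Laplacian, so that the update reads $q_i^{n+1} = (1-\lambda\mu)q_i + \tfrac{\lambda\mu}{2}(q_{i-1}+q_{i+1})$ plus the central contribution $-\mu\bigl(\mean{q}_\gamma^{\,i+1/2}\mean{v}_{i+1/2} - \mean{q}_\gamma^{\,i-1/2}\mean{v}_{i-1/2}\bigr)$, and then aim to write the right-hand side as a combination of $q_{i-1},q_i,q_{i+1}$ with non-negative coefficients.

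The estimate hinges on two properties of the Stolarsky mean. It is a genuine mean of non-negative arguments, hence $\mean{\varrho\theta}_\gamma\ge 0$; and, for the physically relevant range $\gamma\in(1,2]$, it is dominated by the arithmetic mean, $\mean{\varrho\theta}_\gamma\le\mean{\varrho\theta}$, because the Stolarsky mean is monotone in its parameter and coincides with the arithmetic mean at $\gamma=2$. Using $|\mean{v}_{i\pm1/2}|\le\lambda$ (guaranteed by the hypothesis $\lambda\ge\max\{|v_i|,|v_{i\pm1}|\}$ on the interface nodes) together with $\mean{q}_\gamma\ge 0$, I would bound each central term from below,
\begin{equation*}
  -\mu\,\mean{q}_\gamma^{\,i+1/2}\mean{v}_{i+1/2} \ge -\mu\lambda\,\mean{q}_\gamma^{\,i+1/2} \ge -\tfrac{\mu\lambda}{2}(q_i+q_{i+1}),
\end{equation*}
and analogously $+\mu\,\mean{q}_\gamma^{\,i-1/2}\mean{v}_{i-1/2}\ge -\tfrac{\mu\lambda}{2}(q_{i-1}+q_i)$.

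Substituting these bounds, the neighbour contributions $q_{i-1}$ and $q_{i+1}$ produced by the arithmetic-mean bound cancel exactly against those of the discrete Laplacian, and one is left with
\begin{equation*}
  q_i^{n+1} \ge (1-\lambda\mu)q_i - \lambda\mu\,q_i = (1-2\lambda\mu)\,q_i \ge 0,
\end{equation*}
where the last inequality uses $q_i\ge 0$ together with the CFL bound $\lambda\mu\le\tfrac12$, i.e.\ $\Delta t\le\Delta x/(2\lambda)$. This establishes $(\varrho\theta)_i^{n+1}\ge 0$ and hence $p_i^{n+1}\ge 0$; the property then extends to any SSP Runge--Kutta method because, by convexity of the admissible set $G$, it is inherited by convex combinations of the internal Euler steps.

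The step I expect to require the most care is the treatment of the anti-diffusive central flux $\mean{\varrho\theta}_\gamma\mean{v}$. A naive bound of the Stolarsky mean by $\max\{(\varrho\theta)_i,(\varrho\theta)_{i+1}\}$ is too lossy and leaves a stray neighbour term that cannot be absorbed; the clean cancellation above works only through the sharper inequality $\mean{\varrho\theta}_\gamma\le\mean{\varrho\theta}$, which is why the argument is tied to $\gamma\le 2$. It is also worth emphasising that the factor of two in the CFL condition --- stronger than the usual $\Delta t\le\Delta x/\lambda$ of a scalar Lax--Friedrichs scheme --- is precisely the margin $1-\lambda\mu\ge\tfrac12$ needed to absorb this central contribution into the diagonal coefficient.
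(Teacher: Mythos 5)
Your proof is correct and follows essentially the same route as the paper: both hinge on reducing pressure positivity to positivity of $\varrho\theta$ and on the key inequality $\mean{\varrho \theta}_\gamma \leq \mean{\varrho \theta}$, which you justify by monotonicity of the Stolarsky mean in its parameter (valid for $\gamma \le 2$) and the paper justifies by the equivalent monotonicity of the integral-quotient family $f(t)$. The only difference is that where the paper invokes Theorem~6.1 of Ranocha (2018) to conclude positivity for a flux of the form $\overline{q}\mean{v} - \tfrac{\lambda}{2}\jump{q}$ with $\overline{q} \le \mean{q}$, you unfold that theorem's proof explicitly via the discrete-Laplacian splitting and exact cancellation of the neighbour terms, making the argument self-contained.
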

\begin{proof}
    The mean appearing in the flux is given by the integral for $t = 2- \gamma$ (see \cite{MeansGenerated})
 \begin{equation}
            f(t) = \frac{\int_{\revB{\varrho \theta_-}}^{\revB{\varrho \theta_+}} x^{t+1} dx}{\int_{\revB{\varrho \theta_-}}^{\revB{\varrho \theta_+}}x^t dx}.
\end{equation}
 Thus,
\begin{equation}
   f(2-\gamma)  =  \mean{\varrho \theta}_\gamma,
        \end{equation}
and due to the monotonicity of $f$, we have
 \begin{equation}
    f(2-\gamma)  = \mean{\varrho \theta}_\gamma \leq f(0) = \mean{\varrho \theta}.
\end{equation}
Hence, this mean satisfies the conditions of Theorem~6.1 in \cite{Ranocha2018} and the flux preserves the positivity of the potential temperature. Pressure positivity follows, since
\begin{equation}
    p_i^{n+1} = K (\varrho \theta)_i^{n+1} \geq 0 \hspace{1cm} \forall n \in \mathbb{N}.
    \qedhere
\end{equation}
\end{proof}

In Table~\ref{summary_fluxes} we provide a summary overview of the properties of the derived fluxes. It is important to emphasize that the PEP property can only be achieved for an EC flux if the density flux is constructed with logarithmic mean, thereby reducing the degree of freedom in its derivation. On the other hand, a TEC flux does not present this limitation.
\begin{table}[h]
    \centering
    \renewcommand{\arraystretch}{1.3}
    \setlength{\tabcolsep}{10pt}
    \caption{Properties and conserved variables for the different derived numerical fluxes. $\checkmark$: variable conserved or property satisfied. $\times$: variable not conserved or property is not satisfied. $\overline{\varrho} = \mean{\varrho}_\mathrm{log}$: property satisfied only if the density in the $f_{\varrho}$ flux is discretized with a logarithmic mean.}
    \begin{tabular}{l|ccc}
        \toprule
        \textbf{Conserved Variables and Properties} & \multicolumn{3}{c}{\textbf{Numerical Fluxes}} \\
        \cmidrule(lr){2-4}
        & \textbf{EC} \eqref{ECflux} & \textbf{TEC} \eqref{TECflux} & \textbf{ETEC} \eqref{ETECflux} \\
        \midrule
        $\varrho E$  & $\times$ & $\checkmark$ & $\checkmark$ \\
        $\varrho s$  & $\checkmark$  & $\times$ & $\checkmark$ \\
        $\varrho \theta$  & $\checkmark$  & $\checkmark$ & $\checkmark$ \\
        KEP & $\checkmark$  & $\checkmark$ & $\checkmark$ \\
        PEP & $ \overline{\varrho} = \mean{\varrho}_\mathrm{log}$  & $\checkmark$ & $\checkmark$ \\
        \textbf{No Pressure Term in the Density Flux} $f_\revB{\varrho}$ & $\checkmark$ & $\checkmark$ & $\times$ \\
        \bottomrule
    \end{tabular}
    \label{summary_fluxes}
\end{table}
\section{On the conservation properties in presence of a geopotential term}
\label{sec:additional_conservation_properties}
To include gravity effects, typically a source term with a geopotential term $\vec{\phi} = \vec{\phi}(\vec{x})$ is added to the right-hand side of the compressible Euler equations, as in Eqs~\eqref{EulerTotalEnergy} and~\eqref{EulerPotentialTemperature}.
The momentum and the total energy are not conserved anymore and other invariants and properties play a significant role in geophysical flows.
Souza et al.\ \cite{Souza} have developed a kinetic and potential energy preserving (KPEP) numerical flux with the total energy as conserved variable using the Kennedy-Gruber flux~\cite{KENNEDY20081676} for the conservative part of the Euler equations.
In this section, we extend the KPEP property introduced in~\cite{Souza}. We first present a general condition for KPEP fluxes within the FV framework. Furthermore, we introduce the condition for total energy conservation in the presence of gravity, considering two sets of conserved variables: $(\varrho, \varrho v, \varrho E)$ and $(\varrho, \varrho v, \varrho \theta)$. To do that, we employ a non-conservative discretization of the source terms.

\subsection{On the kinetic and total energy}
In presence of gravity source terms given by a generic geopotential $\phi$, the compressible Euler equations in 1D read
\begin{equation}
    \partial_t \begin{pmatrix}
           \varrho \\
           \varrho v \\
           \varrho E
         \end{pmatrix} + \partial_x \begin{pmatrix}
           \varrho v \\
           \varrho v^2 + p \\
           (\varrho E + p ) v
         \end{pmatrix}  =  \begin{pmatrix}
           0 \\
           - \varrho \partial_x \phi \\
           - \varrho v \partial_x \phi \\

         \end{pmatrix}
         \label{eq2}
\end{equation}
for $(\varrho, \varrho v, \varrho E)$ as conserved variables; for the potential temperature formulation, we have
\begin{equation}
    \partial_t \begin{pmatrix}
           \varrho \\
           \varrho v \\
           \varrho \theta
         \end{pmatrix} + \partial_x \begin{pmatrix}
           \varrho v \\
           \varrho v^2 + p \\
           \varrho \theta v
         \end{pmatrix}  =  \begin{pmatrix}
           0 \\
            -\varrho \partial_x \phi \\
           0 \\

         \end{pmatrix}.
         \label{eq3}
\end{equation}
The FV method in presence of a source term is typically written as
\begin{equation}
 	\partial_t \vec{u}_i + \frac{1}{\Delta x} \Bigg ( \vec{f}^\mathrm{num}(\vec{u}_i,\vec{u}_{i+1}) - \vec{f}^\mathrm{num}(\vec{u}_{i-1},\vec{u}_{i}) \Bigg ) = \revA{\vec{\mathcal{S}}}_i.
    \label{FVmethod1Dsource}
\end{equation}
We seek for a particular discretization of the source term $\vec{S}_i$ and conditions such that for both semi-discretization the KPEP and TEC property are satisfied, where the total energy for the systems~\eqref{eq2} and~\eqref{eq3} is $U = \varrho E + \varrho \phi$, which is a conserved quantity. In particular, following the work of \cite{FJORDHOLM20115587,Ranocha2017}, we introduce the general form for the discretization of source terms
\begin{equation}
    \revA{\mathcal{S}}_i^{\varrho v} =\frac{S_{i+1/2}^{\varrho v} + S_{i-1/2}^{\varrho v}}{2 \Delta x} = \frac{\overline{\varrho}_{i+1/2}\jump{\phi}_{i+1/2} + \overline{\varrho}_{i-1/2} \jump{\phi}_{i-1/2}}{2 \Delta x},
\end{equation}
\begin{equation}
    \revA{\mathcal{S}}_i^{\varrho E} =\frac{S_{i+1/2}^{\varrho E} + S_{i-1/2}^{\varrho E}}{2 \Delta x}  = \frac{\overline{v}_{i+1/2}\overline{\varrho}_{i+1/2}\jump{\phi}_{i+1/2} + \overline{v}_{i-1/2}\overline{\varrho}_{i-1/2} \jump{\phi}_{i-1/2}}{2 \Delta x},
\end{equation}
where $\overline{\varrho}_{i+1/2}$ and $\overline{v}_{i+1/2}$ are consistent mean values, such that
\begin{equation}
\begin{aligned}
    \forall \varrho_{i+1} = \varrho = \varrho_{i} > 0&\colon&  \overline{\varrho}_{i+1/2} &= \varrho, \\
    \forall v_{i+1} = v = v_{i}&\colon&  \overline{v}_{i+1/2} &= v.
\end{aligned}
\label{conditionsource}
\end{equation}
We therefore investigate the necessary conditions for ensuring KPEP and TEC property.
\begin{lemma}
\label{lemmathetatpec}
Given the set of variables $(\varrho, \varrho v, \varrho \theta)$, a numerical flux $(f^\mathrm{num}_\varrho, f^\mathrm{num}_{\varrho v}, f^\mathrm{num}_{\varrho \theta})$, the source term $S^{\varrho v}$ and the corresponding FV method \eqref{FVmethod1D} are TEC if
\begin{enumerate}[label=(\roman*)]
    \item the density flux and the source term are of the form
    \begin{equation}
  f^\mathrm{num}_\varrho = \overline{\varrho} \mean{v}, \quad S^{\varrho v}   = \overline{\varrho} \jump{\phi}
\end{equation}
    \item and the numerical flux $(f^\mathrm{num}_\varrho, f^\mathrm{num}_{\varrho v}, f^\mathrm{num}_{\varrho \theta})$ is of the form~\eqref{TECflux}.
\end{enumerate}
\end{lemma}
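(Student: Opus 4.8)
The plan is to reproduce, at the semi-discrete level, the continuous cancellation that makes $U = \varrho E + \varrho \phi$ a conserved quantity: namely the product-rule identity $-\varrho v \,\partial_x \phi - \phi\, \partial_x(\varrho v) = -\partial_x(\varrho v \phi)$, which shows that the momentum-source-induced energy term combines with the transport of the geopotential into a pure flux divergence. Accordingly, I would track $\partial_t U_i = \partial_t(\varrho E)_i + \phi_i\, \partial_t \varrho_i$ (using that $\phi$ is time-independent) and show that it equals a telescoping difference $-\frac{1}{\Delta x}(\tilde F_{i+1/2} - \tilde F_{i-1/2})$ of a consistent numerical total-energy flux $\tilde F$, which is exactly the TEC statement.

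First I would handle $\partial_t(\varrho E)_i$ by the chain rule with the entropy variables of $U_{\varrho E}$, writing $\partial_t(\varrho E)_i = \vec{\omega}_i^{T} \partial_t \vec{u}_i$ and inserting the FV method with source~\eqref{FVmethod1Dsource}. This splits into a flux part $-\frac{1}{\Delta x}\vec{\omega}_i^{T}(\vec{f}_{i+1/2} - \vec{f}_{i-1/2})$ and a source part $\omega_{2,i} S_i^{\varrho v} = v_i S_i^{\varrho v}$, since only the second entropy variable $\omega_2 = m/\varrho = v$ pairs with the momentum source. For the flux part I would invoke that, by hypothesis (ii), the numerical flux has the TEC form~\eqref{TECflux} and therefore satisfies the two-point condition~\eqref{TEC}, i.e.\ $\jump{\vec{\omega}}^{T}\vec{f}^{\mathrm{num}} = \jump{\psi}$ with $\psi = p v$. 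The standard Tadmor telescoping then applies: writing $\vec{\omega}_i = \mean{\vec{\omega}} \mp \tfrac{1}{2}\jump{\vec{\omega}}$ at the two adjacent interfaces and using the two-point condition yields $\vec{\omega}_i^{T}(\vec{f}_{i+1/2}-\vec{f}_{i-1/2}) = F^{\mathrm{num}}_{i+1/2} - F^{\mathrm{num}}_{i-1/2}$ with $F^{\mathrm{num}} = \mean{\vec{\omega}}^{T}\vec{f}^{\mathrm{num}} - \mean{\psi}$, so this contribution is already conservative.

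It then remains to treat the combination $v_i S_i^{\varrho v} + \phi_i \partial_t \varrho_i = v_i S_i^{\varrho v} - \frac{\phi_i}{\Delta x}(f_{\varrho,i+1/2} - f_{\varrho,i-1/2})$, and this is the crux of the argument. Here I would insert the prescribed forms $f^{\mathrm{num}}_\varrho = \overline{\varrho}\,\mean{v}$ and $S^{\varrho v} = \overline{\varrho}\jump{\phi}$ sharing the \emph{same} consistent mean $\overline{\varrho}$, exploit the arithmetic-mean split $S_i^{\varrho v} = \frac{S_{i+1/2}^{\varrho v} + S_{i-1/2}^{\varrho v}}{2\Delta x}$, and perform a discrete summation-by-parts manipulation. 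Averaging the two half-interface source contributions produces the factor $\mean{v}$, while summing the $\phi_i$-weighted density flux by parts produces a $\jump{\phi}$ structure; because the density-flux mean and the source mean coincide, the two terms assemble into a single telescoping flux proportional to $\overline{\varrho}\,\mean{v}\,\phi$, the discrete analogue of $-\partial_x(\varrho v \phi)$. I expect the main obstacle to be verifying this cancellation cleanly: one must confirm that the $\mean{v}\jump{\phi}$ structure emerging from the source exactly matches what the summation by parts generates, so that the pieces combine into a consistent numerical flux with no spurious residual (the matching of $\overline{\varrho}$ and the mean-split structure of the source are precisely what make this work).

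Finally, collecting the conservative flux part $F^{\mathrm{num}}$ and the combined source/geopotential flux, I would set $\tilde F = F^{\mathrm{num}} + (\text{geopotential flux})$ and conclude $\partial_t U_i + \frac{1}{\Delta x}(\tilde F_{i+1/2} - \tilde F_{i-1/2}) = 0$, with $\tilde F$ consistent with the continuous total-energy flux $(\varrho E + p + \varrho \phi)v$; summation over cells then gives conservation up to boundary terms, establishing the claim.
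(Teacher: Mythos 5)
Your proposal is correct and is essentially the paper's proof: the paper likewise splits the energy as $U = \varrho E + \varrho\phi$ by contracting the scheme with the entropy variables $\vec{\omega}_{\varrho E} + \phi\,\vec{e}_1$ (algebraically the same as your $\partial_t U_i = \partial_t(\varrho E)_i + \phi_i\,\partial_t\varrho_i$), telescopes the conservative part via the TEC condition, and cancels the source-energy term against the $\phi$-weighted density-flux difference by expanding point values into means and jumps, relying on $f^\mathrm{num}_{\varrho} = \overline{\varrho}\mean{v}$ sharing the same $\overline{\varrho}$ as the source. The cancellation you anticipate does go through, leaving only a telescoping residual proportional to $\overline{\varrho}\jump{\phi}\jump{v}$ that is absorbed into the final numerical total-energy flux $H_{i+1/2} = F_{i+1/2} + f^{\varrho}_{i+1/2}\mean{\phi}_{i+1/2} - \tfrac{1}{4}\overline{\varrho}_{i+1/2}\jump{\phi}_{i+1/2}\jump{v}_{i+1/2}$, exactly as in the paper.
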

\begin{proof}
    The proof requires the contraction of the right-hand side in entropy space. Therefore, we consider the total energy
\begin{equation}
   U =  \varrho E + \varrho \phi = \frac{p}{\gamma-1} + \frac{1}{2} \frac{m^2}{\varrho} + \varrho \phi.
\label{TECP}
\end{equation}
The entropy variables for the new entropy functional $U$ can be written as
\begin{equation}
    \vec{\omega} = \vec{\omega}_{\varrho E} + \vec{\omega}_{\phi}= \begin{pmatrix}
        -\frac{1}{2}\frac{m^2}{\varrho^2}\\
        \frac{m}{\varrho}\\
        \frac{\gamma}{\gamma-1}K (\varrho \theta)^{\gamma-1}
    \end{pmatrix} + \begin{pmatrix}
        \phi\\
        0\\
        0\\
    \end{pmatrix}.
    \label{entropytecphivariables}
\end{equation}
Hence, we can use this linear relation between the total energy including the potential.
    Therefore, performing a left multiplication of \eqref{FVmethod1Dsource} by the entropy variables \eqref{entropytecphivariables} yields
    \begin{equation}
    \frac{d U_i}{dt} = -\frac{1}{\Delta x}\Bigg ( \langle \vec{\omega}_i, \vec{f}_{i+1/2} \rangle - \langle \vec{\omega}_i, \vec{f}_{i-1/2} \rangle \Bigg) - \langle \vec{\omega}_i , \revA{\vec{\mathcal{S}}}_i \rangle,
\end{equation}
where we introduced the abbreviation $\vec{f}_{i+1/2} = f^\mathrm{num}(\vec{u}_i, \vec{u}_{i+1})$.
By expressing the first two terms as $\vec{\omega}_i = \mean{\vec{\omega}}_{i \pm 1/2} \mp \frac{1}{2}\jump{\vec{\omega}}_{i \pm 1/2}$, we obtain
\begin{align}
   \frac{d U_i}{dt}  = & -\frac{1}{\Delta x} \left ( \langle \mean{\vec{\omega}}_{i+1/2},\vec{f}_{i+1/2} \rangle - \frac{1}{2}\langle \jump{\vec{\omega}}_{i+1/2},\vec{f}_{i+1/2}\rangle \right  )  \nonumber \\ &-\frac{1}{\Delta x} \left ( \langle \mean{\vec{\omega}}_{i-1/2}, \vec{f}_{i-1/2}\rangle + \frac{1}{2} \langle \jump{\vec{\omega}}_{i-1/2}, \vec{f}_{i-1/2} \rangle\right ) - \langle \vec{\omega}_i, \revA{\vec{\mathcal{S}}}_i \rangle.
\end{align}
Therefore, we can split the different contributions and employing the TEC definition and the definition of the source term, we are left with
\begin{align}
    &\frac{d U_i}{dt} =-\frac{1}{\Delta x}(F_{i+1/2} - F_{i-1/2}) -\frac{\phi_i}{\Delta x} (f_{i+1/2}^{\varrho} - f_{i-1/2}^{\varrho} ) - v_i \frac{\overline{\varrho}_{i + 1/2} \jump{\phi}_{i+1/2} + \overline{\varrho}_{i - 1/2} \jump{\phi}_{i-1/2}}{2 \Delta x},
\end{align}
where $F_{i+1/2}$ is the conservative and symmetric total energy flux. Following a similar argument as before,
\begin{align}
    \frac{d U_i}{dt} = &- \frac{1}{\Delta x} (F_{i+1/2} - F_{i-1/2}) -\frac{1}{\Delta x}(f_{i+1/2}^{\varrho} \mean{\phi}_{i+1/2}- f_{i-1/2}^{\varrho}\mean{\phi}_{i-1/2}) + \nonumber \\
    &+ \frac{\overline{\varrho}_{i + 1/2} \jump{\phi}_{i+1/2} \jump{v}_{i+1/2}- \overline{\varrho}_{i - 1/2} \jump{\phi}_{i-1/2} \jump{\revA{v}}_{i-1/2}}{4 \Delta x} + \nonumber \\
    & +\frac{\jump{\phi}_{i+1/2}f^{\varrho}_{i+1/2} + \jump{\phi}_{i-1/2}f^{\varrho}_{i-1/2}}{2\Delta x} - \frac{\overline{\varrho}_{i+1/2}\jump{\phi}_{i+1/2}\mean{v}_{i+1/2} + \overline{\varrho}_{i-1/2}\jump{\phi}_{i-1/2}\mean{v}_{i-1/2}}{2 \Delta x}.
\end{align}
If $f_{i+1/2}^\varrho = \overline{\varrho}_{i+1/2} \mean{v}_{i+1/2}$ then the last two terms vanish
\begin{align}
    \frac{d U_i}{dt} = &- \frac{1}{\Delta x} (F_{i+1/2} - F_{i-1/2}) -\frac{1}{\Delta x}(f_{i+1/2}^{\varrho} \mean{\phi}_{i+1/2}- f_{i-1/2}^{\varrho}\mean{\phi}_{i-1/2}) \nonumber \\
    &+ \frac{\overline{\varrho}_{i + 1/2} \jump{\phi}_{i+1/2} \jump{v}_{i+1/2}- \overline{\varrho}_{i - 1/2} \jump{\phi}_{i-1/2} \jump{v}_{i-1/2}}{4 \Delta x},
\end{align}
which can be written in a conservative form as
 \begin{equation}
     \frac{d U_i}{ dt} = - \frac{1}{\Delta x} (H_{i+1/2} - H_{i-1/2}),
 \end{equation}
 where
 \begin{equation}
     H_{i+1/2} = F_{i+1/2} + f_{i+1/2}^\varrho \mean{\phi}_{i+1/2} - \frac{\overline{\varrho}_{i+1/2}}{4} \jump{\phi}_{i+1/2} \jump{v}_{i+1/2}.
    \qedhere
 \end{equation}
\end{proof}

Note that for the set of conserved variables with the potential temperature we had to impose the TEC flux derived in the previous section. That is not required when working with $(\varrho, \varrho v, \varrho E)$ as primary variables, since $\varrho E$ is a first invariant when $\phi = 0$.
\begin{lemma}

    Given the set of conserved variables $(\varrho, \varrho v, \varrho E)$, a numerical flux $(f^\mathrm{num}_\varrho, f^\mathrm{num}_{\varrho v}, f^\mathrm{num}_{\varrho E})$, the source terms $S^{\varrho v}$, $S^{\varrho E}$ and the corresponding FV method \eqref{FVmethod1D} are TEC if the density flux and the source terms are of the form
    \begin{equation}
        f^{\mathrm{num}}_\varrho = \overline{\varrho} \mean{v}, \quad
        S^{\varrho v} = \overline{\varrho} \jump{\phi},
        \quad
        S^{\varrho E} = f^\mathrm{num}_{\varrho} \jump{\phi}.
    \end{equation}
\end{lemma}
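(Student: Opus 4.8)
The plan is to repeat the entropy-space contraction used in the proof of Lemma~\ref{lemmathetatpec}, which becomes considerably simpler here because $\varrho E$ is itself a conserved variable. First I would take the conserved total energy $U = \varrho E + \varrho \phi$ and compute its entropy variables with respect to $\vec{u} = (\varrho, \varrho v, \varrho E)$. Since $\varrho E$ is now a component of $\vec{u}$ and $\phi$ is a fixed spatial function, the $\varrho E$ part contributes $(0,0,1)^T$ and the geopotential part contributes $(\phi, 0, 0)^T$, so that
\begin{equation}
  \vec{\omega} = \vec{\omega}_{\varrho E} + \vec{\omega}_{\phi} = (0,0,1)^T + (\phi,0,0)^T = (\phi, 0, 1)^T .
\end{equation}
The structurally important observation is that the momentum entropy variable vanishes, $\omega_2 = 0$; consequently the momentum source $S^{\varrho v}$ does not enter the total energy balance at all, in contrast to the potential temperature case.

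Next I would left-multiply the FV scheme~\eqref{FVmethod1Dsource} by $\vec{\omega}_i$. The contribution of the energy flux is $-\frac{1}{\Delta x}(f^{\varrho E}_{i+1/2} - f^{\varrho E}_{i-1/2})$, which is already in conservative (telescoping) form; this is precisely why, unlike in Lemma~\ref{lemmathetatpec}, no specific form of the energy flux needs to be imposed (it reflects that $\varrho E$ is a first invariant when $\phi = 0$). The only remaining terms are the density-flux contribution $-\frac{\phi_i}{\Delta x}(f^{\varrho}_{i+1/2} - f^{\varrho}_{i-1/2})$ and the source contribution $\langle \vec{\omega}_i, \vec{S}_i \rangle = S^{\varrho E}_i$.

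Then I would decompose $\phi_i = \mean{\phi}_{i\pm 1/2} \mp \tfrac{1}{2}\jump{\phi}_{i\pm 1/2}$ at each interface. The mean parts telescope and are absorbed into a combined flux, whereas the jump parts produce $\tfrac{1}{2\Delta x}\big(\jump{\phi}_{i+1/2} f^{\varrho}_{i+1/2} + \jump{\phi}_{i-1/2} f^{\varrho}_{i-1/2}\big)$. Inserting the energy source $S^{\varrho E}_{i\pm 1/2} = f^{\varrho}_{i\pm 1/2}\jump{\phi}_{i\pm 1/2}$ cancels this jump contribution exactly, leaving the conservative form $\frac{d U_i}{dt} = -\frac{1}{\Delta x}(H_{i+1/2} - H_{i-1/2})$ with $H_{i+1/2} = f^{\varrho E}_{i+1/2} + \mean{\phi}_{i+1/2} f^{\varrho}_{i+1/2}$. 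The role of the density flux $f^{\varrho} = \overline{\varrho}\mean{v}$ is to make the source consistent with $-\varrho v\,\partial_x \phi$; notably, no $\jump{\phi}\jump{v}$ correction term appears, because $\omega_2 = 0$ removes the dependence on the nodal velocity $v_i$ that generated such a term in Lemma~\ref{lemmathetatpec}.

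Since the computation is a direct simplification of the preceding proof, I expect no serious obstacle. The hard part is purely bookkeeping: matching the sign and normalization conventions of the source vector $\vec{S}_i$ so that the jump-potential contribution and $S^{\varrho E}$ cancel rather than reinforce, together with the clean verification that the momentum source genuinely decouples from the total energy balance.
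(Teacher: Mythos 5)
Your proposal is correct and takes essentially the same route as the paper: contracting the scheme with $\vec{\omega}_i = (\phi_i, 0, 1)^T$ is precisely the paper's opening step $\partial_t(\varrho_i E_i + \phi_i \varrho_i) = \partial_t(\varrho_i E_i) + \phi_i \partial_t \varrho_i$, and both arguments then split $\phi_i = \mean{\phi}_{i\pm 1/2} \mp \tfrac{1}{2}\jump{\phi}_{i\pm 1/2}$, cancel the resulting jump terms against $S^{\varrho E}_{i\pm 1/2} = f^{\varrho}_{i\pm 1/2}\jump{\phi}_{i\pm 1/2}$, and obtain the same telescoping flux $H_{i+1/2} = f^{\varrho E}_{i+1/2} + \mean{\phi}_{i+1/2} f^{\varrho}_{i+1/2}$. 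Your observation that the momentum source decouples because $\omega_2 = 0$ is also implicit in the paper's proof, where $S^{\varrho v}$ never enters.
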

\begin{proof}
    A semi-discretization of the total energy starting with the given set of conserved variables can be written as
    \begin{equation}
        \partial_t (\varrho_i E_i + \phi_i \varrho_i) = \partial_t ( \varrho_i E_i) + \phi_i \partial_t (\varrho_i),
    \end{equation}
    since the geopotential depends only on the space coordinates. Following the same steps as shown before, and considering $f^\varrho_{i+1/2} = \overline{v}_{i+1/2} \overline{\varrho}_{i+1/2}$ we obtain
   \begin{equation}
    \begin{aligned}
        \frac{d ( \varrho_i E_i + \phi_i \varrho_i)}{dt} = & - \frac{1}{\Delta x} (f_{i+1/2}^{\varrho E} - f_{i-1/2}^{\varrho E}) - \frac{\phi_i}{\Delta x} (f_{i+1/2}^\varrho - f_{i-1/2}^{\varrho}) \\
        & - \frac{ \overline{v}_{i+1/2}\overline{\varrho}_{i + 1/2} \jump{\phi}_{i+1/2} + \overline{v}_{i-1/2}\overline{\varrho}_{i - 1/2} \jump{\phi}_{i-1/2}}{2 \Delta x},
    \end{aligned}
\end{equation}
    which can be further developed into
    \begin{multline}
        \frac{d ( \varrho_i E_i + \phi_i \varrho_i)}{dt}
        =
        - \frac{1}{\Delta x} (f_{i+1/2}^{\varrho E} - f_{i-1/2}^{\varrho E}) -\frac{1}{\Delta x}(f_{i+1/2}^{\varrho} \mean{\phi}_{i+1/2}- f_{i-1/2}^{\varrho}\mean{\phi}_{i-1/2})
        \\
        +\frac{\jump{\phi}_{i+1/2}f^{\varrho}_{i+1/2} + \jump{\phi}_{i-1/2}f^{\varrho}_{i-1/2}}{2\Delta x}  - \frac{ \overline{v}_{i+1/2}\overline{\varrho}_{i + 1/2} \jump{\phi}_{i+1/2} + \overline{v}_{i-1/2}\overline{\varrho}_{i - 1/2} \jump{\phi}_{i-1/2}}{2 \Delta x}.
    \end{multline}
    The last two terms vanishes if $ f_{i+1/2}^\varrho= \overline{v}_{i+1/2} \overline{\varrho}_{i+1/2} $; its conservative form is
    \begin{equation}
        \frac{d}{dt} ( \varrho_i E_i + \phi_i \varrho_i) = -\frac{1}{\Delta x} (H_{i+1/2} - H_{i-1/2}),
    \end{equation}
    where $H$ is the numerical flux of the total energy
    \begin{equation}
        H_{i+1/2} = f_{i+1/2}^{\varrho E} + f_{i+1/2}^{\varrho} \mean{\phi}_{i+1/2}.
        \qedhere
    \end{equation}
\end{proof}

Following the same approach as above, we investigate the condition for the numerical fluxes to achieve kinetic and potential energy preserving (KPEP) property. Since the momentum equation and the KEP property is not influenced by the set of variables, we can formulate a single lemma, that accounts for both.
\begin{lemma}
   Given the set of conserved variables $(\varrho, \varrho v, \varrho E)$ or $(\varrho, \varrho v, \varrho \theta)$, a numerical flux $(f^\mathrm{num}_{\varrho}, f^\mathrm{num}_{\varrho v}, f^\mathrm{num}_{\varrho E})$ or $(f^\mathrm{num}_{\varrho}, f^\mathrm{num}_{\varrho v}, f^\mathrm{num}_{\varrho \theta})$, a source term $S^{\varrho v}$ and the corresponding FV method \eqref{FVmethod1D} are KPEP if
\begin{enumerate}[label=(\roman*)]
    \item the density flux and the source term are of the form
    \begin{equation}
  f^\mathrm{num}_\varrho = \overline{\varrho} \mean{v}, \quad S^{\varrho v}   = \overline{\varrho} \jump{\phi}
\end{equation}
    \item and the numerical flux is KEP.
\end{enumerate}
\end{lemma}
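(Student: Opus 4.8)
The plan is to follow the same energy-contraction strategy as in the proof of Lemma~\ref{lemmathetatpec}, but now contracting the momentum update against the velocity $v_i$ rather than against the full entropy variables. Since neither the momentum flux, the KEP condition~\eqref{kep}, nor the source term $S^{\varrho v}$ involves the third conserved variable, the argument is literally the same for $(\varrho,\varrho v,\varrho E)$ and $(\varrho,\varrho v,\varrho\theta)$, which is why a single lemma covers both. First I would write the semidiscrete updates for $\varrho_i$ and $m_i=\varrho_i v_i$ from~\eqref{FVmethod1Dsource} and derive the balance for the kinetic energy $k_i=\tfrac12 m_i^2/\varrho_i$ via the product rule $\partial_t k_i = v_i\partial_t m_i - \tfrac12 v_i^2\partial_t\varrho_i$, together with the potential-energy balance $\partial_t(\varrho_i\phi_i)=\phi_i\partial_t\varrho_i$, which holds because $\phi$ is time-independent.

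Next I would insert the KEP condition $f^{\mathrm{num}}_{\varrho v}=\mean{v}f^{\mathrm{num}}_{\varrho}+\mean{p}$ and split the contributions into three groups: a transport part proportional to $f^{\mathrm{num}}_\varrho$, a pressure-work part proportional to $\mean{p}$, and a gravitational part involving $\phi_i$ and $S^{\varrho v}_i$. For the transport part I would use $\mean{v}_{i\pm 1/2}=\tfrac12(v_i+v_{i\pm1})$ to collapse the combination $\bigl(-v_i\mean{v}_{i+1/2}+\tfrac12 v_i^2\bigr)f^{\mathrm{num}}_{\varrho,i+1/2}$ to $-\tfrac12 v_i v_{i+1}\, f^{\mathrm{num}}_{\varrho,i+1/2}$, so that the transport of kinetic energy telescopes into a conservative flux difference; the $\mean{p}$ part is left as the consistent discrete pressure work $-\tfrac{v_i}{\Delta x}\bigl(\mean{p}_{i+1/2}-\mean{p}_{i-1/2}\bigr)$, exactly the term converting between kinetic and internal energy that KEP keeps free of spurious production.

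The heart of the proof, and the step I expect to be the main obstacle, is the gravitational part. Here I would split $\phi_i=\mean{\phi}_{i\pm1/2}\mp\tfrac12\jump{\phi}_{i\pm1/2}$ in $-\tfrac{\phi_i}{\Delta x}\bigl(f^{\mathrm{num}}_{\varrho,i+1/2}-f^{\mathrm{num}}_{\varrho,i-1/2}\bigr)$ to peel off the telescoping potential-transport flux $\mean{\phi}f^{\mathrm{num}}_\varrho$, leaving a residual that, together with the gravity contribution $-v_i S^{\varrho v}_i$ from the momentum source, must be shown conservative. Substituting the prescribed forms $f^{\mathrm{num}}_\varrho=\overline{\varrho}\mean{v}$ and $S^{\varrho v}=\overline{\varrho}\jump{\phi}$ — which crucially share the same mean density $\overline{\varrho}$ at each interface — and using $\mean{v}_{i\pm1/2}-v_i=\pm\tfrac12\jump{v}_{i\pm1/2}$, this combined residual reduces to $\tfrac{1}{4\Delta x}\bigl(\overline{\varrho}_{i+1/2}\jump{\phi}_{i+1/2}\jump{v}_{i+1/2}-\overline{\varrho}_{i-1/2}\jump{\phi}_{i-1/2}\jump{v}_{i-1/2}\bigr)$, i.e.\ a flux difference of the consistent-with-zero flux $-\tfrac14\overline{\varrho}\jump{\phi}\jump{v}$ — the same correction appearing in $H_{i+1/2}$ in Lemma~\ref{lemmathetatpec}.

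Collecting the three groups then shows that $\partial_t(k_i+\varrho_i\phi_i)$ equals a single conservative flux difference plus the consistent pressure-work term, with no spurious mechanical-energy production, which is precisely the KPEP property. I would close by checking consistency of each constituent flux ($-\tfrac12 v_iv_{i+1}f^{\mathrm{num}}_\varrho$, $\mean{\phi}f^{\mathrm{num}}_\varrho$, and the jump-product correction) so that their sum is a genuine discretization of $kv+\varrho v\phi$. The only delicate bookkeeping is the sign with which the source enters the momentum update, since the cancellation of the cross-term $-v_i S^{\varrho v}_i$ against the potential-transport residual depends on it.
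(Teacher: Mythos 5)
Your proposal is correct and follows essentially the same route as the paper: the product-rule decomposition $\partial_t\bigl(\tfrac12\varrho_i v_i^2+\varrho_i\phi_i\bigr)=v_i\partial_t(\varrho_i v_i)-\tfrac12 v_i^2\partial_t\varrho_i+\phi_i\partial_t\varrho_i$, the KEP condition disposing of the kinetic-transport and pressure terms, and the combination of $-\phi_i$ times the density-flux difference with $-v_iS^{\varrho v}_i$ collapsing to the conservative correction $-\tfrac14\overline{\varrho}\jump{\phi}\jump{v}$. The only difference is that the paper delegates this last cancellation to the preceding TEC lemmas ("this has already been done in the previous lemmas"), whereas you carry out the computation explicitly — and your result matches the flux $H_{i+1/2}$ appearing there.
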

\begin{proof}
    First we note that
    \begin{equation}
    \partial_t \left (\frac{1}{2}k + \varrho \phi \right ) = v \partial_t (\varrho v) - \frac{v^2}{2}\partial_t \varrho + \phi \partial_t \varrho = - \partial_x \left ( v \left( \frac{1}{2}\varrho v^2 + p + \varrho \phi  \right) \right ) + p \partial_x v.
\end{equation}
A semi-discretization leads to
\begin{equation}
    \partial_t \left ( \frac{1}{2}\varrho_i v_i^2 + \varrho_i \phi_i \right ) = - v_i \frac{f^{\varrho v}_{i+1/2} - f^{\varrho v}_{i-1/2}}{\Delta x} + \frac{v_i^2}{2} \frac{f^{\varrho}_{i+1/2} - f^{\varrho}_{i-1/2}}{\Delta x} - v_i S^{\varrho v}_i -\phi_i \frac{f^{\varrho}_{i+1/2} - f^{\varrho}_{i-1/2}}{\Delta x}.
\end{equation}
If $f^\mathrm{num}_{\varrho v}$ is KEP, this proof reduces to finding a condition that results in a conservative form for the last two terms. This has already been done in the previous lemmas, leading to the same condition.
\end{proof}

\subsection{Well-balanced schemes}
Steady states are particular solutions of the compressible Euler equations. In this section, we show how to construct well-balanced schemes by employing the same non-conservative product discretization as in the previous section. In particular, in presence of a potential the scheme should be able to mimic at a discrete level the hydrostatic balance of the Euler equations, i.e.,
\begin{equation}
    \nabla p = -\varrho \nabla \phi.
\end{equation}
The steady background state is typically prescribed either with an isothermal or constant potential temperature state, where then perturbations are added. Waruszewski et al.\ \cite{WARUSZEWSKI2022111507} derived a well-balanced scheme with isothermal background state for DG methods using a generalization of flux differencing for numerical fluxes in fluctuation form.
Based on their results, here we show that this also applies for our FV methods and we present a well-balanced scheme for a constant potential temperature background state.

The only equation which plays a role in the well-balanced scheme is the momentum. Thus, independently of the last closure equation, ($\varrho E$ or $\varrho \theta$, or yet more general), these results are satisfied as long as the momentum $\varrho v$ is employed as primary invariant.
\begin{lemma}
    A source term $S^{\varrho v}$ preserves the hydrostatic balance prescribed by a constant background temperature $T$ of the compressible Euler equations if the source term is of the form
    \begin{equation}
 S^{\varrho v} = \mean{\varrho}_\mathrm{log} \jump{\phi},
 \label{nonconservativeT}
\end{equation}
and the pressure term is discretized as $\mean{p}$.
\end{lemma}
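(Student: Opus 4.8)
The plan is to show that when the steady isothermal state is substituted into the semi-discrete momentum equation, the right-hand side vanishes cell by cell, which is exactly the defining property of a well-balanced scheme. Since the velocity is zero in hydrostatic equilibrium and only the momentum balance is nontrivial, I would first record the two consequences of the background state that I actually need. For an ideal gas at constant temperature $T$ the equation of state reduces to $p = \varrho R T$, so that $\jump{p} = R T \jump{\varrho}$ across every interface. Substituting $p = \varrho R T$ into the continuous balance $\partial_x p = -\varrho\,\partial_x \phi$ and integrating yields the barometric profile $\varrho \propto e^{-\phi/(RT)}$, hence $\jump{\log\varrho} = -\tfrac{1}{RT}\jump{\phi}$, equivalently $\jump{\phi} = -RT\,\jump{\log\varrho}$. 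No other property of the equilibrium enters the argument, which is consistent with the earlier remark that well-balancedness is governed by the momentum equation alone, independently of the closure variable.

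Next I would write out the FV momentum update. Because $v\equiv 0$, the KEP momentum flux $f^{\mathrm{num}}_{\varrho v} = \mean{v} f^{\mathrm{num}}_{\varrho} + \mean{p}$ collapses to the pressure flux $\mean{p}$, so the inter-cell flux difference at cell $i$ equals $\mean{p}_{i+1/2} - \mean{p}_{i-1/2}$. Writing $\mean{p}_{i\pm1/2} = p_i \pm \tfrac12 \jump{p}_{i\pm1/2}$ turns this into $\tfrac12\big(\jump{p}_{i+1/2} + \jump{p}_{i-1/2}\big)$, while the cell source term $S^{\varrho v}_i$ collects the two half-interface contributions $\mean{\varrho}_\mathrm{log}\jump{\phi}$ at $i+1/2$ and $i-1/2$. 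Grouping the update interface by interface, the whole claim reduces to the single per-interface identity
\begin{equation}
\jump{p} + \mean{\varrho}_\mathrm{log}\,\jump{\phi} = 0.
\end{equation}
This reduction is where the arithmetic-mean discretization of the pressure is essential: it is precisely $\mean{p}$ that lets the two pressure jumps pair off with the two source jumps one interface at a time.

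Finally I would close the argument using the two background identities. Substituting $\jump{p} = RT\jump{\varrho}$ and $\jump{\phi} = -RT\jump{\log\varrho}$ gives
\begin{equation}
\jump{p} + \mean{\varrho}_\mathrm{log}\,\jump{\phi} = RT\big(\jump{\varrho} - \mean{\varrho}_\mathrm{log}\,\jump{\log\varrho}\big),
\end{equation}
and by the very definition $\mean{\varrho}_\mathrm{log} = \jump{\varrho}/\jump{\log\varrho}$ the bracket is identically zero. I expect the main conceptual point to be exactly this cancellation, rather than any serious computation: the logarithmic mean is the unique averaging that converts $\jump{\log\varrho}$ back into $\jump{\varrho}$, which is what the \emph{exponential} barometric profile of the isothermal hydrostatic balance demands, and it is why an arithmetic or geometric mean of density in the source would fail to be well balanced. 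The only real bookkeeping obstacle is making sure the two half-interface source terms and the two pressure jumps are aligned so that the balance can be verified per interface; once that alignment is set up, the rest is a one-line application of the definition of $\mean{\cdot}_\mathrm{log}$.
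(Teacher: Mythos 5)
Your proof is correct and follows essentially the same route as the paper: both substitute the isothermal profile $p = \varrho R T$, $\jump{\phi} = -RT\jump{\log\varrho}$ into the semi-discrete momentum balance and let the defining identity $\mean{\varrho}_{\mathrm{log}}\jump{\log\varrho} = \jump{\varrho}$ of the logarithmic mean produce the cancellation. The only difference is bookkeeping: you first reduce the cell update to the per-interface identity $\jump{p} + \mean{\varrho}_{\mathrm{log}}\jump{\phi} = 0$ via $\mean{p}_{i\pm1/2} = p_i \pm \tfrac{1}{2}\jump{p}_{i\pm1/2}$, whereas the paper substitutes the steady state into the full cell balance and cancels globally using the same mean/jump identity (applied to $\varrho$ instead of $p$) at the end.
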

\begin{proof}
Presenting the idea in 1D, we consider
\begin{equation}
    \partial_x p = - \varrho \partial_x \phi.
    \label{balanceeq1d}
\end{equation}
A solution of \eqref{balanceeq1d} must be of the form
\begin{align}
    p = p_0 e^{-\frac{\phi}{RT}}
    \quad\text{and}\quad
    \varrho = \varrho_0 e ^ {-\frac{{\phi}}{RT}} = \frac{p_0}{RT} e ^ {-\frac{{\phi}}{RT}},
    \label{IsothermalSolution}
\end{align}
where $p_0 = p(x = 0)$ and $\varrho_0 = \varrho(x = 0)$. The semi-discretization~\eqref{FVmethod1Dsource} reduces to
\begin{equation}
    \frac{\mean{p}^+ - \mean{p}^-}{\Delta x} + \frac{\mean{\varrho}^+_\mathrm{log}\jump{\phi}^+ + \mean{\varrho}^-_\mathrm{log} \jump{\phi}^-}{2 \Delta x} = 0,
\end{equation}
where we introduced the abbreviation $\mean{a}^{\pm} = \mean{a}_{i\pm 1/2}$.
Inserting~\eqref{IsothermalSolution} results in
\begin{equation}
     RT\frac{\mean{\varrho}^+ - \mean{\varrho}^-}{\Delta x} - RT\frac{\jump{\varrho}^+ + \jump{\varrho}^-}{2 \Delta x} = 0,
\end{equation}
which is always satisfied  $\forall \varrho^{\pm}$.
\end{proof}
For constant potential temperature background state, we consider the hydrostatic balance explicitly expressed in terms of the variable $\varrho \theta$, namely in 1D
\begin{equation}
    K \partial_x ( \varrho \theta) ^ \gamma = -\varrho \partial_x \phi.
\end{equation}
If no perturbations in $\theta$ are present, i.e., $\theta = \text{const}$, the relation simplifies to
\begin{equation}
   K \theta^ \gamma \partial_x  \varrho^\gamma = -\varrho \partial_x \phi,
\end{equation}
and a closed form solution is of the form
\begin{equation}
    \frac{1}{\gamma - 1}\left ( \varrho^ {\gamma - 1} - \varrho_0^{\gamma-1}\right ) = - \frac{1}{K \gamma \theta^\gamma} (\phi - \phi_0),
    \label{closedformtheta}
\end{equation}
where $\varrho_0 = \varrho(x = 0)$ and $\phi_0 = \phi (x = 0)$. At this point we can prove the following lemma.
\begin{lemma}
    A source term $S^{\varrho v}$ preserves the hydrostatic balance prescribed by a constant background potential temperature $\theta$ of the compressible Euler equations if the source term is of the form
    \begin{equation}
 S^{\varrho v} = \mean{\varrho}_\gamma \jump{\phi},
 \label{nonconservativetheta}
\end{equation}
and the pressure term is discretized as $\mean{p}$.
\end{lemma}

\begin{proof}
 The semi-discretization~\eqref{FVmethod1Dsource} at the steady state reads
\begin{equation}
    \frac{\mean{p}^+ - \mean{p}^-}{\Delta x} + \frac{\frac{\gamma - 1}{\gamma} \frac{\jump{\varrho^\gamma}}{\jump{\varrho^{\gamma - 1}}}\jump{\phi}^+ + \frac{\gamma - 1}{\gamma} \frac{\jump{\varrho^\gamma}}{\jump{\varrho^{\gamma - 1}}} \jump{\phi}^-}{2 \Delta x} = 0.
\end{equation}
Using the steady state solution~\eqref{closedformtheta}, we can write the source term as
\begin{equation}
    S = \frac{\gamma - 1}{\gamma} \frac{\jump{\varrho^\gamma}}{\jump{\varrho^{\gamma - 1}}} \jump{\phi} = -K \theta^\gamma \jump{\varrho ^\gamma},
\label{eqnonconservativetheta}
\end{equation}
and consequently the semi-discretization reduces to
\begin{equation}
    K\theta^\gamma \frac{\mean{\varrho^\gamma}^+ -\mean{\varrho^\gamma}^-}{\Delta x} - K \theta^\gamma \frac{\jump{\varrho^\gamma}^+ + \jump{\varrho^\gamma}^-}{2\Delta x} = 0,
\end{equation}
which is always identically zero $\forall \varrho^\pm$, for the general property that $a_i = \mean{a}^\pm \mp \frac{\jump{a}^\pm}{2}$.
\end{proof}

In Tables~\ref{summary_log_fluxes} and \ref{summary_stolarsky_fluxes} we summarize the properties of the different numerical fluxes derived in this section. Note that an EC numerical flux for $(\varrho, \varrho v, \varrho E)$ cannot be TEC, KPEP and preserve hydrostatic balance for $\theta = \text{const}$ due to $f^\mathrm{num}_\varrho = \mean{\varrho}_\mathrm{log} \mean{v}$. On the other hand, for $\varrho \theta$ you can always at least satisfy two conditions among TEC, KPEP and well-balancedness for $\theta = \text{const}$ or $T = \text{const}$.

\begin{table}[h!]
    \centering
    \renewcommand{\arraystretch}{1.3} % Increases row height for readability
    \setlength{\tabcolsep}{10pt} % Adjusts column spacing
    \caption{Properties and conserved variables for the different derived numerical fluxes. $\checkmark$: variable conserved or property satisfied. $\times$: variable not conserved or property not satisfied.}
    \label{summary_log_fluxes}
    \begin{tabular}{l|ccc}
        \toprule
         \textbf{$S^{\varrho v} = \mean{\varrho}_\mathrm{log} \jump{\phi}$ Conserved Variables and Properties} & \multicolumn{3}{c}{\textbf{Numerical Fluxes}} \\
        \cmidrule(lr){2-4}
        & \textbf{EC} & \textbf{TEC} & \textbf{ETEC} \\
        \midrule
        KPEP  & $\checkmark$ & $\checkmark$ & $\times$ \\
        TEC (including potential)  & $\times$  & $\checkmark$ & $\times$ \\
        Hydrostatic Balance $T = \text{const}$ & $\checkmark$  & $\checkmark$ & $\checkmark$ \\
        Hydrostatic Balance $\theta = \text{const}$ & $\times $  & $\times$ & $\times$ \\
        \bottomrule
    \end{tabular}
\end{table}
\begin{table}[h!]
    \centering
    \renewcommand{\arraystretch}{1.3} % Increases row height for readability
    \setlength{\tabcolsep}{10pt} % Adjusts column spacing
    \caption{Properties and conserved variables for the different derived numerical fluxes. $\checkmark$: variable conserved or property satisfied. $\times$: variable not conserved or property not satisfied.}
    \label{summary_stolarsky_fluxes}
    \begin{tabular}{l|ccc}
        \toprule
         \textbf{$S^{\varrho v} = \mean{\varrho}_\gamma \jump{\phi}$ Conserved Variables and Properties} & \multicolumn{3}{c}{\textbf{Numerical Fluxes}} \\
        \cmidrule(lr){2-4}
        & \textbf{EC} & \textbf{TEC} & \textbf{ETEC} \\
        \midrule
        KPEP  & $\checkmark$ & $\checkmark$ & $\times$ \\
        TEC  & $\times$  & $\checkmark$ & $\times$ \\
        Hydrostatic Balance $T = \text{const}$ &  $\times $  & $\times$ & $\times$ \\
        Hydrostatic Balance $\theta = \text{const}$ & $\checkmark$  & $\checkmark$ & $\checkmark$ \\
        \bottomrule
    \end{tabular}
\end{table}
\section{Entropy-conservative DGSEM discretization}
\label{sec:dgsem}
In this section we extend all the previous properties to DGSEM.
To obtain an entropy-stable DGSEM discretization of a conservation law\revA{~\eqref{HyperbolicConservationLaw1D}}
%\begin{equation}
%    \partial_t \vec{u} + \partial_x \vec{f}(\vec{u}) = \vec{0},
%    \label{conservationlaw}
%\end{equation}
the computational domain is divided into non-overlapping elements.
Within each element, the solution is approximated by Lagrange polynomials of degree $N$
defined on the Lobatto-Gauss-Legendre (LGL) nodes. These basis functions are continuous inside each element but may
be discontinuous across element boundaries. Such discontinuities are resolved with the introduction of two-point numerical fluxes.

The discrete scheme is obtained by multiplying the governing equation by test functions
of degree $N$, integrating by parts on each element, and evaluating the resulting integrals
using an LGL quadrature rule with $N+1$ nodes on the reference interval
$\xi \in [-1,1]$. This procedure leads to the semi-discrete formulation
\begin{equation}
  J\omega_i \vec{\dot{u}}_i(t) + (\vec{f}^*_{(N,R)} - \vec{f}_N)\delta_{iN} - (\vec{f}^*_{(0,L)} - \vec{f}_0)\delta_{i0} + 2 \sum_{j=0}^N\omega_i D_{ij} \vec{f}^\mathrm{vol}_{(\revB{i,j})} = 0,
  \label{dgsem_conservative}
\end{equation}
where $J$ is the element Jacobian, $\omega_i$ are the quadrature weights,
$D_{ij}=l'_j(\xi_i)$ is the differentiation matrix derived from the
Lagrange basis\revA{, $\delta_{ij}$ is the Kronecker delta function,} $\vec{f}^*_{(i,j)}$ is the numerical two-point surface flux\revA{,} $\vec{f}^\mathrm{vol}_{(j,i)}$ is the volume numerical two-point symmetric flux\revB{, $\vec{f}_i = \vec{f}(\vec{u}_i)$} \revA{ and the indexes $R$ and $L$ refer to the right and left neighboring nodes of the adjacent elements, respectively.}
Moreover, the differentiation matrix satisfies the summation-by-parts (SBP) property
\begin{equation}
    (\mtx{M}\mtx{D}) + (\mtx{M}\mtx{D})^T = \mtx{Q} + \mtx{Q}^T = \vec{B},
\end{equation}
where $\mtx{M}$ is the diagonal mass matrix, \revB{$\mtx{B} = \text{diag}(-1,0,\dots,0,1)$ is the boundary matrix} and $Q_{ij} = \omega_i D_{ij}$ \cite{gassner2013skew}.

\revA{A general nonconservative hyperbolic system can be written as
\begin{equation}
    \partial_t \vec{u} + \partial_x \vec{f}(\vec{u}) + \underbrace{\vec{r}(\vec{u}) \circ \partial_x \vec{b}(\vec{u})}_{\vec{g}(u)} = 0,
    \label{nonconservative_hyperbolic_1D}
\end{equation}
where $\circ$ denotes the Hadamard product, $\vec{r}(\vec{u})$ and $\vec{b}(\vec{u})$ are vectors valued functions of the state vector $\vec{u}$, and $\vec{g}(\vec{u})$ is the non-conservative product.}
A natural extension of the semidiscrete non-conservative products in the FV method is the DGSEM discretization of Eq.~\ref{nonconservative_hyperbolic_1D} (see, e.g., \cite{GASSNER2016291,DERIGS2018420})
\begin{align}
    J \revB{\omega_i} \dot{\vec{u}}_i = - \sum_{j=0}^N 2Q_{ij} \vec{f}^\mathrm{vol}_{(i,j)} \revB{+} \delta_{iN}\vec{f_N} \revB{-} \delta_{i0}\vec{f_0} + \sum_{j=0}^N Q_{ij} \vec{g}_{(i,j)}^\mathrm{vol} -\vec{f}^*_{(N,R)}\delta_{iN} \revB{+} \vec{f}^*_{(0,L)}\delta_{i0} \revB{-} \vec{g}^*_{(N,R)}\frac{\delta_{iN}}{2} \revB{+} \vec{g}^*_{(0,L)}\frac{\delta_{i0}}{2},
    \label{DGSEM1D}
\end{align}
where $\vec{g}^*_{(i,j)}$ is the surface numerical non-conservative flux and $\vec{g}^\mathrm{vol}$ is the volume numerical non-conservative flux.
Rueda-Ramirez et al.~\cite{RUEDARAMIREZ2024112607} showed that the DGSEM semi-discretization~\eqref{DGSEM1D} can be rewritten as flux differencing formula, if the non-conservative terms can be written
as the product of a local and a symmetric contribution, i.e.,
\begin{equation}
    \vec{g}^\mathrm{vol}_{(i,j)} = \revA{\vec{r}^{sym}_{(i,j)} \circ \vec{b}_i}.
    \label{form_g_vol}
\end{equation}
Here, we extend this result to non-conservative terms that are anti-symmetric, i.e., terms that can be written as the product of a symmetric contribution and an anti-symmetric contribution.
\begin{lemma}
It is possible to rewrite \eqref{DGSEM1D} as a flux-difference formula,
\begin{equation}
    J \omega_i \vec{\dot{\vec{u}}}_i = \vec{\Gamma}_{(i,i-1)} - \vec{\Gamma}_{(i,i+1)} \hspace{2cm} i= 0,...,N,
\end{equation}
where the indexes $i = -1$ and $i= N$ refer to the outer states (across the left and right boundaries, respectively) and $\Gamma_{i,k}$ is the so-called staggered (or telescoping) “flux” between node $i$ and the adjacent node $k$, if it is possible
to write the volume numerical non-conservative term as a product of an anti-symmetric and a symmetric contribution, i.e.,
\begin{equation}
    \vec{g}^\mathrm{vol}_{(i,j)} = \vec{\revA{r}}^{sym}_{(i,j)} \circ \jump{\vec{\revA{b}}}_{(i,j)}.
    \label{fluxg}
\end{equation}
\end{lemma}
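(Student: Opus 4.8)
The plan is to handle the conservative and non-conservative parts of \eqref{DGSEM1D} separately and then assemble a single staggered flux $\vec{\Gamma}_{(j,k)} = \vec{\Gamma}^{\mathrm{c}}_{(j,k)} + \vec{\Gamma}^{\mathrm{nc}}_{(j,k)}$. The conservative volume term $-\sum_k 2 Q_{jk}\vec{f}^\mathrm{vol}_{(j,k)}$ together with the surface and consistency terms is covered by the standard Fisher–Carpenter/Gassner telescoping on SBP operators used in \cite{RUEDARAMIREZ2024112607}: since $\vec{f}^\mathrm{vol}$ is symmetric and consistent and $\mtx{Q}+\mtx{Q}^T=\vec{B}$, the volume term equals a difference of partial-sum fluxes up to a boundary term supported only at nodes $0$ and $N$, which is exactly absorbed by the consistency terms $\pm\delta\,\vec{f}$ and the surface fluxes $\vec{f}^*_{(0,L)},\vec{f}^*_{(N,R)}$. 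I would only recall this and concentrate on the non-conservative part.

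For the non-conservative volume term, the first step is to record the two structural consequences of the assumed factorization \eqref{fluxg}: because $\vec{g}^{sym}_{(j,k)}$ is symmetric and $\jump{\vec{g}}_{(j,k)}=\vec{g}_k-\vec{g}_j$ is anti-symmetric, the product $\vec{g}^\mathrm{vol}_{(j,k)}$ is anti-symmetric and, in particular, vanishes on the diagonal, $\vec{g}^\mathrm{vol}_{(j,j)}=\vec{0}$. Hence the volume sum splits cleanly into a left and a right part,
\begin{equation}
  \sum_{k=0}^N Q_{jk}\vec{g}^\mathrm{vol}_{(j,k)}
  = \underbrace{\sum_{k<j} Q_{jk}\vec{g}^\mathrm{vol}_{(j,k)}}_{=:\ \vec{\Gamma}^{\mathrm{nc}}_{(j,j-1)}}
  - \underbrace{\Big(-\sum_{k>j} Q_{jk}\vec{g}^\mathrm{vol}_{(j,k)}\Big)}_{=:\ \vec{\Gamma}^{\mathrm{nc}}_{(j,j+1)}},
\end{equation}
which is already of the desired flux-difference shape at every interior node.

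The substance of the argument is then to show that this $\vec{\Gamma}^{\mathrm{nc}}$ is a bona fide telescoping flux, i.e.\ that it reduces to the prescribed surface non-conservative fluxes at the element interfaces. To this end I would split $\jump{\vec{g}}_{(j,k)}=\vec{g}_k-\vec{g}_j$ and treat the two local factors with the SBP identity $Q_{jk}=B_{jk}-Q_{kj}$. The $-\vec{g}_j$ contribution factors the local value out of the sum and is precisely the symmetric-times-local case of \cite{RUEDARAMIREZ2024112607}, whose telescoping I may invoke directly. The $+\vec{g}_k$ contribution is the genuinely new term: applying $Q_{jk}=B_{jk}-Q_{kj}$ together with the symmetry $\vec{g}^{sym}_{(j,k)}=\vec{g}^{sym}_{(k,j)}$ converts $\sum_k Q_{jk}\vec{g}^{sym}_{(j,k)}\circ\vec{g}_k$ into a transposed local-factor sum $-\sum_k Q_{kj}\vec{g}^{sym}_{(k,j)}\circ\vec{g}_k$, which again telescopes, plus a boundary term $B_{jj}\,\vec{g}^{sym}_{(j,j)}\circ\vec{g}_j$ supported only at $j\in\{0,N\}$.

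The main obstacle I anticipate is exactly this boundary bookkeeping. The $\vec{B}$-generated terms at $j=0$ and $j=N$ must be shown to combine with the surface non-conservative fluxes $\vec{g}^*_{(0,L)}$ and $\vec{g}^*_{(N,R)}$ of \eqref{DGSEM1D}, which carry the characteristic factor $\tfrac12$, so that $\vec{\Gamma}^{\mathrm{nc}}_{(0,-1)}$ and $\vec{\Gamma}^{\mathrm{nc}}_{(N,N+1)}$ become the intended outer-interface fluxes rather than spurious interior residuals. Reconciling the coefficient conventions between the volume term (coefficient $1$ on $\vec{g}^\mathrm{vol}$) and the surface term (coefficient $\tfrac12$ on $\vec{g}^*$) is where the anti-symmetric case departs from the symmetric one; verifying that the assembled $\vec{\Gamma}_{(j,k)}$ is consistent at repeated-argument states then closes the proof.
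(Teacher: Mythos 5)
There is a genuine gap, and it sits at the center of your argument rather than in the boundary bookkeeping you flag as the anticipated obstacle. The split
\begin{equation*}
  \sum_{k=0}^N Q_{jk}\vec{g}^\mathrm{vol}_{(j,k)}
  = \sum_{k<j} Q_{jk}\vec{g}^\mathrm{vol}_{(j,k)}
  - \Bigl(-\sum_{k>j} Q_{jk}\vec{g}^\mathrm{vol}_{(j,k)}\Bigr)
\end{equation*}
is vacuous: any node-local quantity can be written as a difference of two pieces. For this to be a flux-difference (telescoping) form, the flux at the interface between nodes $j$ and $j+1$ must be \emph{single-valued}, i.e.\ your $\vec{\Gamma}^{\mathrm{nc}}_{(j,j+1)}$ (built from row $j$ of $\mtx{Q}$ and the evaluations $\vec{g}^\mathrm{vol}_{(j,k)}$, $k>j$) must equal $\vec{\Gamma}^{\mathrm{nc}}_{(j+1,j)}$ (built from row $j+1$ and the evaluations $\vec{g}^\mathrm{vol}_{(j+1,k)}$, $k\le j$). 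These involve disjoint sets of two-point terms and are not equal in general: already for $N=1$, $j=0$ one gets $-Q_{01}\vec{g}^\mathrm{vol}_{(0,1)}$ versus $Q_{10}\vec{g}^\mathrm{vol}_{(1,0)}=+Q_{01}\vec{g}^\mathrm{vol}_{(0,1)}$, which differ by a sign. The genuine staggered flux in the Fisher--Carpenter framework is a double sum over index pairs straddling the interface, and constructing it is precisely what must be proved; your decomposition assumes it. The second unsupported step is your treatment of $\sum_k Q_{jk}\,\vec{g}^{sym}_{(j,k)}\circ\vec{g}_k$: after the transposition $Q_{jk}=B_{jk}-Q_{kj}$ you are left with a column-type sum in which the local factor sits at the summation index $k$; this structure is covered neither by the symmetric flux-differencing lemma nor by the symmetric-times-local lemma of \cite{RUEDARAMIREZ2024112607}, and the assertion that it ``again telescopes'' is exactly the point that needs proof.

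The paper closes both gaps with a one-line identity that your splitting $\jump{\vec{g}}_{(j,k)}=\vec{g}_k-\vec{g}_j$ narrowly misses: write instead
\begin{equation*}
  \jump{\vec{g}}_{(j,k)} = 2\mean{\vec{g}}_{(j,k)} - 2\vec{g}_j,
  \qquad\text{so that}\qquad
  \vec{g}^\mathrm{vol}_{(j,k)} = 2\,\vec{g}^{sym}_{(j,k)}\circ\mean{\vec{g}}_{(j,k)} - 2\,\vec{g}^{sym}_{(j,k)}\circ\vec{g}_j.
\end{equation*}
The first term is a purely symmetric two-point volume flux (symmetric times symmetric), the second is symmetric times local; both are exactly the cases for which \cite{RUEDARAMIREZ2024112607} already provides the telescoping flux, so no new SBP manipulation or interface bookkeeping is needed. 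Note that if you repair your problematic term by writing $\vec{g}_k=\mean{\vec{g}}_{(j,k)}+\tfrac12\jump{\vec{g}}_{(j,k)}$ and solve for the volume term, you are driven back to precisely this identity --- an indication that the transposition route cannot bypass it.
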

\begin{proof}
The anti-symmetric (jump) term can be rewritten as
\begin{equation}
\jump{\vec{\revA{b}}}_{(i,j)} = 2 \mean{\vec{\revA{b}}}_{(i,j)} - 2 \vec{\revA{b}}_i.
\end{equation}
Thus, the volume term reads
\begin{equation}
    \vec{g}^\mathrm{vol}_{(i,j)} = 2 \vec{\revA{r}}^{sym}_{(i,j)} \circ \mean{\vec{\revA{b}}}_{(i,j)} - 2 \vec{\revA{r}}^{sym}_{(i,j)} \circ \vec{\revA{b}}_i,
\end{equation}
where the first term is purely symmetric, and the second term is the product of a symmetric and a local contribution. Therefore, the rest of proof follows the same steps as the flux-differencing proposition in~\cite[Proposition 1]{RUEDARAMIREZ2024112607}, \revB{with
\begin{multline}
\vec{\Gamma}_{(i,k)} = \sum_{i=0}^{\min(i,k)}\sum_{j=0}^N \tilde{Q}_{ij} \Bigl ( \vec{f}^\mathrm{vol}_{(i,j)} + \vec{r}^{\mathrm{sym}}_{(i,j)} \circ \mean{\vec{b}}_{(i,j)} \Bigr ) \\- \vec{b}_j \circ \sum_{i=0}^{\min(i,k)}\sum_{j=0}^N \tilde{Q}_{ij} \vec{r}^{\mathrm{sym}}_{(i,j)}, \quad i = 0,\dots, N, k \in \{i-1, i+1 \}
\end{multline}
\begin{equation}
    \vec{\Gamma}_{(0,-1)} = \vec{f}^*_{(0,L)} + \frac{1}{2}\vec{g}^*_{(0,L)}, \quad \vec{\Gamma}_{(N,N+1)} = \vec{f}^*_{(N,R)} + \frac{1}{2}\vec{g}^*_{(N,R)},
\end{equation}
where $\vec{\tilde{Q}} = 2 \vec{Q} - \vec{B}$.}
\end{proof}

Note that the anti-symmetric flux \eqref{fluxg} is consistent with zero, i.e. $\vec{g}^*(\vec{u}, \vec{u}) = 0$.
%Given the anti-symmetric property of the non-conservative flux, we can rewrite our discretization as
%\begin{equation}
%\begin{aligned}
%    J\omega_i \vec{\dot{u}}_i = &-\left ( \sum_{j=0}^N \omega_i 2 D_{ij} \vec{f}^\mathrm{vol}_{(i,j)} - \delta_{iN}f_N + \delta_{i0}f_0 + \sum_{j=0}^N \omega_i D_{ij}\vec{g}_{(i,j)}^\mathrm{vol} \right )\\
%    &-\left (\vec{f}^*_{(N,R)}\delta_{iN} - \vec{f}^*_{(0,L)}\delta_{i0} + \vec{g}^*_{(N,R)}\frac{\delta_{iN}}{2} - \vec{g}^*_{(0,L)}\frac{\delta_{i0}}{2}\right ).
%\end{aligned}
%\end{equation}
Note that the non-conservative anti-symmetric flux also reduces computational cost in the volume term, as it avoids the diagonal contributions and requires computing only the off-diagonal terms, either from the upper or the lower triangle of the matrix, similar to the contribution of the conservative symmetric volume flux \cite{ranocha2023efficient}.
\revA{For the compressible Euler equations with gravity the nonconservative product can be written as in~\eqref{nonconservative_hyperbolic_1D}
\begin{equation}
\vec{r}_{\varrho \theta} (\vec{u}) = \varrho \vec{1}, \quad \vec{b}_{\varrho \theta}(\vec{u})^T = (0, \phi, 0)
\end{equation}
and
\begin{equation}
\vec{r}_{\varrho E} (\vec{u})^T = (0, \varrho, \varrho v), \quad \vec{b}_{\varrho E}(\vec{u})^T = (0, \phi, \phi)
\end{equation}
respectively for the two formulations of the compressible Euler equations with gravity.
}

\subsection{Conservation properties of DGSEM}
In this section we extend the FV properties of the numerical fluxes for the non-conservative product of the geopotential term to the DGSEM.
First, we extend the result of Lemma~\ref{lemmathetatpec}.
\begin{lemma}
Given the set of variables $(\varrho, \varrho v, \varrho \theta)$, a numerical non-conservative surface and volume flux $g_{\varrho v}$ and the corresponding DGSEM discretization \eqref{DGSEM1D} are TEC if

\begin{enumerate}[label=(\roman*)]
    \item the density and non conservative fluxes are of the form
    \begin{equation}
     f^{\varrho,*} = f^{\varrho, \text{vol}} = \overline{\varrho} \mean{v}, \quad
    g^{\varrho v,*} = g^{\varrho v, \text{vol}} = \overline{\varrho} \jump{\phi},
\end{equation}
    \item \revB{and the volume and the surface fluxes $\vec{f}^\mathrm{vol}$, $\vec{f}^*$ are of the form~\eqref{TECflux} or~\eqref{ETECflux}.}
\end{enumerate}
\end{lemma}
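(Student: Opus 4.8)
The plan is to mirror the finite-volume proof of Lemma~\ref{lemmathetatpec} node-by-node, exploiting the summation-by-parts (SBP) structure of the DGSEM operator together with the flux-differencing rewriting established earlier in this section. The quantity whose conservation I must verify is the total energy including the geopotential, $U = \varrho E + \varrho \phi$, whose entropy variables decompose linearly as $\vec{\omega} = \vec{\omega}_{\varrho E} + \vec{\omega}_\phi$ exactly as in~\eqref{entropytecphivariables}, with $\vec{\omega}_\phi = (\phi, 0, 0)^T$. First I would contract the DGSEM semi-discretization by the nodal entropy variables $\vec{\omega}_i$ and sum over all nodes $i=0,\dots,N$ weighted appropriately. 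The goal is to show that the total discrete time derivative $\sum_i J\omega_i\, \langle \vec{\omega}_i, \dot{\vec{u}}_i\rangle$ telescopes into pure surface contributions, leaving no volumetric production.

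The key algebraic engine is the SBP property $\mtx{Q} + \mtx{Q}^T = \vec{B}$. When contracting the symmetric volume flux term $2\sum_j Q_{ij}\vec{f}^{\mathrm{vol}}_{(i,j)}$ against $\vec{\omega}_i$ and summing over $i$, the standard entropy-conservation argument applies: using the TEC property of the two-point flux~\eqref{TECflux} or~\eqref{ETECflux} (namely $\jump{\vec{\omega}}^T \vec{f}^{\mathrm{vol}} = \jump{\psi}$), the antisymmetric part of $\mtx{Q}$ converts the bilinear sum into a telescoping jump of the flux potential $\psi$, while the boundary part $\vec{B}$ produces the consistent surface terms. This is precisely where hypothesis (ii) enters — it is what guarantees the Tadmor condition holds node-to-node, so that I may invoke the fact (established for the FV case) that $\varrho E$ is conserved by the conservative part. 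For the non-conservative term I would use the flux-differencing rewriting proved earlier in this section: since $g^{\varrho v}_{(i,j)} = \overline{\varrho}\jump{\phi}_{(i,j)}$ is the product of a symmetric contribution $\overline{\varrho}$ and the antisymmetric jump $\jump{\phi}_{(i,j)}$, it fits the form~\eqref{fluxg}, and the telescoping lemma guarantees the volume non-conservative contribution also collapses to surface data.

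The decisive cancellation, as in the FV proof, hinges on the consistency choice $f^{\varrho,\mathrm{vol}}_{(i,j)} = \overline{\varrho}\mean{v}_{(i,j)}$ from hypothesis (i). Contracting the momentum-component entropy variable $v_i$ against the source and combining with the density flux contribution produces terms structurally identical to the last two terms in the FV computation; with $f^{\varrho,\mathrm{vol}} = \overline{\varrho}\mean{v}$ these cancel pairwise, and what survives reassembles into a single conservative telescoping flux. I would then exhibit the resulting staggered flux $H$ in the form
\begin{equation}
    H_{(i,j)} = F_{(i,j)} + f^{\varrho,\mathrm{vol}}_{(i,j)}\mean{\phi}_{(i,j)} - \tfrac{1}{4}\overline{\varrho}\,\jump{\phi}_{(i,j)}\jump{v}_{(i,j)},
\end{equation}
in direct analogy with the FV result, thereby establishing that $\sum_i J\omega_i\,\dot{U}_i$ equals a pure surface balance and hence the scheme is TEC.

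The main obstacle I anticipate is bookkeeping rather than conceptual: correctly tracking how the boundary matrix $\vec{B}$ interacts with both the conservative surface fluxes $\vec{f}^*$ and the non-conservative surface fluxes $\vec{g}^*$, particularly the factor-of-$\tfrac{1}{2}$ appearing on $\vec{g}^*$ in~\eqref{DGSEM1D}. The antisymmetric flux $\vec{g}^*$ vanishes on the diagonal, so I must verify that its surface evaluation combines consistently with the contracted volume term to yield the correct fluctuation-form surface contribution without a spurious production term at element interfaces. Since hypothesis (i) sets the surface density and non-conservative fluxes equal to their volume counterparts ($f^{\varrho,*}=f^{\varrho,\mathrm{vol}}$ and $g^{\varrho v,*}=g^{\varrho v,\mathrm{vol}}$), this alignment is exactly what makes the surface terms telescope cleanly between adjacent elements; confirming that this alignment suffices is the crux of the argument.
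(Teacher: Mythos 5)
Your proposal is correct and follows essentially the same route as the paper's proof: contraction of the DGSEM scheme with the linearly decomposed entropy variables $\vec{\omega} = \vec{\omega}_{\varrho E} + \phi\,\vec{e}_1$, the SBP splitting $\mtx{Q} = \mtx{B} - \mtx{Q}^T + \mtx{Q}$ with symmetric/anti-symmetric reindexing of the conservative and non-conservative volume terms, the Tadmor-type TEC condition for hypothesis (ii), and the decisive pairwise cancellation between the $\jump{\phi}\,f^{\varrho}$ contribution and the contracted source term enabled by $f^{\varrho,\mathrm{vol}} = \overline{\varrho}\mean{v}$. The only cosmetic difference is that you invoke the earlier flux-differencing (telescoping) lemma for the non-conservative term, whereas the paper handles it directly through the anti-symmetry of $\vec{g}_{(i,j)}$ inside the entropy contraction; since you also carry out that contraction and identify the same cancellation, this changes nothing of substance.
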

\begin{proof}
Since the proof mimics the steps for the FV framework, we only present the main steps.
We introduce the generic anti-symmetric volume and surface numerical flux
\begin{equation}
  g^{\varrho v, *} =  g^{\varrho v, \mathrm{vol}}  = \overline{\varrho} \jump{\phi},
\end{equation}
and we define the entropy variables
\begin{equation}
    \vec{q} = \begin{pmatrix}
        -\frac{1}{2}\frac{m^2}{\varrho^2} + \phi\\
        \frac{m}{\varrho}\\
        \frac{\gamma}{\gamma-1}K (\varrho \theta)^{\gamma-1}
    \end{pmatrix} = \begin{pmatrix}
        -\frac{1}{2}\frac{m^2}{\varrho^2} \\
        \frac{m}{\varrho}\\
        \frac{\gamma}{\gamma-1}K (\varrho \theta)^{\gamma-1}
    \end{pmatrix} + \begin{pmatrix}
        \phi\\
        0\\
        0
    \end{pmatrix}
   = \vec{v} + \phi \vec{e_1}.
\end{equation}
Since the conservative numerical flux is TEC,
\begin{equation}
    \jump{\vec{v}^T} \vec{f}^\mathrm{num} = \jump{\psi}.
\end{equation}
We can contract in entropy space
\begin{equation}
    \sum_{i=0}^N \vec{v}_i^T \vec{M} J \dot{\vec{u}}_i = \langle \vec{v}, J \vec{\dot{u}}\rangle_{\mtx{M}} = J\dot{\eta}.
\end{equation}
In the following proof we drop the term $\mtx{M}$, and divide the volume contribution $\text{VOL}$ and the surface contribution SURF:
\begin{equation}
\begin{aligned}
    J \dot{\eta} = &-\sum_{i = 0}^N \vec{v}_i^T \left ( \sum_{j=0}^N 2 Q_{ij} \vec{f}^\mathrm{vol}_{(i,j)} - \delta_{iN}\vec{f_N} + \delta_{i0}\vec{f_0} + \sum_{j=0}^N Q_{ij} \vec{g}_{(i,j)}^\mathrm{vol} \right )\\
    &-\sum_{i = 0}^N \vec{v}_i^T  \left (\vec{f}^*_{(N,R)}\delta_{iN} - \vec{f}^*_{(0,L)}\delta_{i0} + \vec{g}^*_{(N,R)}\frac{\delta_{iN}}{2} - \vec{g}^*_{(0,L)}\frac{\delta_{i0}}{2}\right )\\
    = & - \text{VOL} - \text{SURF}.
\end{aligned}
\end{equation}
We first consider the volume term:
\begin{equation}
\begin{aligned}
    \text{VOL} &= \sum_{i = 0}^N \vec{q}_i^T \left ( \sum_{j=0}^N 2 \revB{Q}_{ij} \vec{f}^\mathrm{vol}_{(i,j)} - \delta_{iN}\vec{f_N} + \delta_{i0}\vec{f_0} + \sum_{j=0}^N \revB{Q}_{ij} \vec{g}_{(i,j)}^\mathrm{vol} \right )  \\ &= \sum_{i,j=0}^N \vec{q}_i^T \revB{Q}_{ij} \vec{f}^\mathrm{vol}_{(i,j)} - \vec{q}^T_N\vec{f_N} + \vec{q}^T_0 \vec{f_0} + \sum_{i,j=0}^N \vec{q}_i^T  Q_{ij} \vec{g}_{(i,j)}^\mathrm{vol}.
\end{aligned}
\end{equation}
By applying the SBP property $\revB{2Q}_{ij} = B_{ij} - Q_{ji} + Q_{ij}$,
\begin{align}
    \text{VOL} = \sum_{i,j=0}^N \vec{q}_i^T (\revB{B}_{ij} - Q_{ji} + Q_{ij}) \vec{f}^\mathrm{vol}_{(i,j)} - \vec{q}^T_N\vec{f_N} + \vec{q}^T_0 \vec{f_0} + \frac{1}{2}\sum_{i,j=0}^N \vec{q}_i^T  (\revB{B}_{ij} - Q_{ji} + Q_{ij}) \vec{g}_{(i,j)}^\mathrm{vol}.
\end{align}
Because of the flux consistency on the diagonal elements and the definition of $\vec{B}$, we have
\begin{align}
   \text{VOL} = \sum_{i,j=0}^N \vec{q}_i^T (- Q_{ji} + Q_{ij}) \vec{f}^\mathrm{vol}_{(i,j)}+ \frac{1}{2}\sum_{i,j=0}^N \vec{q}_i^T  ( - Q_{ji} + Q_{ij}) \vec{g}_{(i,j)}^\mathrm{vol}.
\end{align}
Since the $\revB{\vec{f}}_{(i,j)} = \revB{\vec{f}}_{(j,i)}$ is symmetric and $\revB{\vec{g}}_{(i,j)} =  -\revB{\vec{g}}_{(j,i)}$ is anti-symmetric, we can rearrange and reindex
\begin{align}
    \text{VOL} =  \sum_{i,j=0}^N (\vec{q}_i^T - \vec{q}_j^T) Q_{ij} \vec{f}^\mathrm{vol}_{(i,j)}+ \frac{1}{2}\sum_{i,j=0}^N (\vec{q}_i^T + \vec{q}_j^T) Q_{ij} \vec{g}_{(i,j)}^\mathrm{vol}.
\end{align}
Applying the definition of TEC flux, we are left with
\begin{align}
    \text{VOL} =  \sum_{i,j=0}^N Q_{ij}(\psi_i - \psi_j) + \sum_{i,j=0}^N (\phi_i - \phi_j) \revB{Q}_{ij} f^{\varrho}_{(i,j)}
  + \frac{1}{2}\sum_{i,j=0}^N (v_i + v_j) \revB{Q}_{ij} \overline{\varrho}_{i,j} (\phi_j - \phi_i).
\end{align}
Because of the form of the density flux, \revB{i.e., $f^{\varrho}_{(i,j)} = \overline{\varrho}_{i,j} \mean{v}_{i,j}$, and} by applying the properties of the differential operator $\vec{Q}$ we have $\text{VOL} = \psi_0 - \psi_N$.
Now that we simplified the volume term, we can sum also the surface term
\begin{align}
    J \dot{\eta}
    = -\text{VOL} - \text{SURF}
    = \psi_N - \psi_0 - \vec{q}_N^T \vec{f}^*_{(N,R)} +
    \vec{q}_0^T \vec{f}^*_{(0,L)} -\frac{1}{2}\vec{q}_N^T  \vec{g}^*_{(N,R)} + \frac{1}{2}\vec{q}_0^T \vec{g}^*_{(0,L)}.
\end{align}
The proof now proceeds as for the FV method.
\end{proof}

The remaining properties are straightforward following the same steps as in the above lemma.

\subsection{Well-balanced DGSEM}
\begin{lemma}
    The DGSEM discretization \eqref{DGSEM1D} with non-conservative surface and volume fluxes $g^*_{(i,j)}$, $g^\mathrm{vol}_{(i,j)}$ preserves the hydrostatic balance prescribed by a constant background temperature $T$ of the compressible Euler equations if
    \begin{enumerate}[label=(\roman*)]
    \item the numerical non-conservative fluxes are of the form
    \begin{equation}
        g^{*} = g^\mathrm{vol} = \mean{\varrho}_{\log} \jump{\phi},
    \end{equation}
    \item and the pressure term is discretized as $\mean{p}$.
\end{enumerate}
\label{lemmaTDGSEM}
\end{lemma}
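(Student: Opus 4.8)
The plan is to mirror the finite-volume proof of the isothermal well-balanced lemma, reducing everything to the momentum component and then exploiting the summation-by-parts structure in place of the telescoping FV differences. First I would restrict to the hydrostatic state, i.e.\ set $v \equiv 0$. Then the density flux $\overline{\varrho}\mean{v}$ and the potential-temperature flux $\mean{\varrho\theta}_\gamma\mean{v}$ vanish at every node, and neither of these components carries a source, so the discrete time derivatives of $\varrho$ and $\varrho\theta$ are identically zero. Hence the only thing to prove is that the right-hand side of the $\varrho v$-component of \eqref{DGSEM1D} vanishes at every node $i$.

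Next I would introduce the discrete isothermal relations. For constant temperature $T$ the ideal gas law gives $p = RT\varrho$ pointwise, so $\mean{p}_{(i,j)} = RT\mean{\varrho}_{(i,j)}$ and the nodal momentum flux reduces to $p_i = RT\varrho_i$. The analytic profile $\varrho = \varrho_0 e^{-\phi/(RT)}$ from \eqref{IsothermalSolution} yields $\phi = -RT\log\varrho + \mathrm{const}$, hence $\jump{\phi}_{(i,j)} = -RT\jump{\log\varrho}_{(i,j)}$. Feeding this into the prescribed non-conservative flux and using the defining identity $\mean{\varrho}_{\mathrm{log}}\jump{\log\varrho} = \jump{\varrho}$ of the logarithmic mean collapses the geopotential contribution to $\mean{\varrho}_{\mathrm{log},(i,j)}\jump{\phi}_{(i,j)} = -RT\jump{\varrho}_{(i,j)}$. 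This is exactly the discrete counterpart of the reduction that made the FV source equal to $-RT\jump{\varrho}$, and it is the step that forces the logarithmic mean in hypothesis (i).

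I would then carry out the SBP cancellation on the volume terms. Using $\mean{p}_{(i,j)} = (p_i+p_j)/2$ together with $\sum_j Q_{ij} = \omega_i(\mtx{D}\vec{1})_i = 0$, the conservative pressure volume term collapses to $\sum_{j} 2 Q_{ij}\mean{p}_{(i,j)} = (\mtx{Q}\vec{p})_i = RT(\mtx{Q}\vec{\varrho})_i$, while the non-conservative term becomes $\sum_j Q_{ij}\mean{\varrho}_{\mathrm{log},(i,j)}\jump{\phi}_{(i,j)} = -RT(\mtx{Q}\vec{\varrho})_i$ by the same argument. In the combination prescribed by \eqref{DGSEM1D} these interior contributions cancel node by node, so at every interior node the momentum right-hand side is zero. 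Equivalently, one can invoke the flux-differencing reformulation established above: the staggered momentum flux reduces at the hydrostatic state to a single spatially constant value, whence $\vec{\Gamma}^{\varrho v}_{(j,j-1)} - \vec{\Gamma}^{\varrho v}_{(j,j+1)} = 0$.

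The main obstacle, as in the FV case, is the bookkeeping at the boundary nodes $i=0$ and $i=N$, where the nodal flux corrections $\delta_{iN}f_N$, $\delta_{i0}f_0$, the conservative surface flux $f^* = \mean{p}$, and the non-conservative surface flux $g^* = \mean{\varrho}_{\mathrm{log}}\jump{\phi}$ all enter simultaneously. Here I would use the boundary matrix $\mtx{B} = \mtx{Q}+\mtx{Q}^T$ to convert the boundary volume contributions into surface terms and show that, provided the neighbouring element trace also satisfies the same hydrostatic profile, the surface pressure average and the surface geopotential flux combine exactly as the interior terms do, so the total right-hand side still vanishes across element interfaces. Tracking these signs and verifying the telescoping at the interfaces is the delicate part; the interior cancellation and the logarithmic-mean identity are otherwise routine.
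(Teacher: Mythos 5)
Your proposal is correct and follows essentially the same route as the paper's proof: restrict to the momentum component at the state of rest, substitute the isothermal relations $p = RT\varrho$ and $\mean{\varrho}_{\log}\jump{\phi} = -RT\jump{\varrho}$ (the logarithmic-mean identity), and conclude that the volume terms vanish via the zero row sums of $\mtx{Q}$ while the surface and nodal correction terms at $i=0,N$ cancel among themselves under the same substitution. The only differences are presentational: the paper collapses each volume summand to the nodal constant $RT\varrho_i$ before invoking $\sum_j D_{ij}=0$, whereas you cancel the two discrete derivatives $(\mtx{Q}\vec{p})_i$ and $RT(\mtx{Q}\vec{\varrho})_i$ against each other — the same computation up to grouping and up to the paper's (admittedly loose) sign and jump conventions, which also absorb your slightly imprecise aside that the staggered momentum flux is "spatially constant" (it equals $RT\varrho_j$, i.e.\ it is only equal on the two sides of each node, which is all that is needed).
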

\begin{proof}
    The semi-discretization \eqref{DGSEM1D} applied to \revB{the momentum of} the Euler equations and considering a background state reduces for $i = 0$ to
\begin{multline}
    -\mean{p} + p_R - \mean{\varrho}_\mathrm{log} \frac{\phi_L - \phi_R}{2} + 2 \sum_{j=0}^N\omega_i D_{ij} \left (\mean{p}_{i,j}  + \mean{\varrho}_{\text{log},i,j}\frac{\jump{\phi}_{j,i}}{2} \right )
    \\=
    -\mean{p} + p_R + \mean{\varrho}_\mathrm{log} \frac{\jump{\phi}}{2}+ 2 \sum_{j=0}^N\omega_i D_{ij} \left (\mean{p}_{i,j}  + \mean{\varrho}_{\text{log},i,j}\frac{\jump{\phi}_{j,i}}{2} \right ).
\end{multline}
Substituting the solution for the isothermal background balance
$\jump{\log (\varrho)} = - \jump{\phi} / (RT)$ \revB{and the ideal gas law $p = \varrho R T$} yields
\begin{multline}
    -RT \mean{\varrho} + RT \varrho_R +RT\frac{\jump{\varrho}}{2}
    + 2 \sum_{j=0}^N\omega_i D_{ij} \left (\frac{p_j}{2}
    -RT \frac{\jump{\varrho}}{2} \right )
    = 2 \sum_{j=0}^N\omega_i D_{ij} \left (\frac{p_j}{2}  + RT\frac{\varrho_j}{2} \right ) = 0.
\end{multline}
For the case $i = N$ and general $i$, the proof follows the same steps.
\end{proof}

\begin{lemma}
    The DGSEM discretization \eqref{DGSEM1D} non-conservative surface and volume flux $g^*_{(i,j)}$, $g^\mathrm{vol}_{(i,j)}$  preserves the hydrostatic balance prescribed by a constant background potential temperature $\theta$ of the compressible Euler equations if
    \begin{enumerate}[label=(\roman*)]
    \item the numerical non-conservative fluxes are of the form
    \begin{equation}
    g^{*}_{(i,j)} = g^\mathrm{vol}_{(i,j)} = \mean{\varrho}_\gamma \jump{\phi},
\end{equation}

    \item and the pressure term is discretized as $\mean{p}$.
\end{enumerate}
\end{lemma}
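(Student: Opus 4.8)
The plan is to transcribe the isothermal DGSEM argument of Lemma~\ref{lemmaTDGSEM}, exploiting the fact that the momentum equation and the non-conservative discretization~\eqref{DGSEM1D} are identical in both cases; only the background equilibrium relation differs. First I would specialize the momentum component of~\eqref{DGSEM1D} to the prescribed constant-$\theta$ steady state (for which $v \equiv 0$), using the pressure surface and volume flux $\mean{p}$ together with the non-conservative flux $g^* = g^\mathrm{vol} = \mean{\varrho}_\gamma \jump{\phi}$, and evaluate the result first at a boundary node (say $i=1$), exactly as in the isothermal case.

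Two substitutions drive the cancellation. Because $\theta \equiv \text{const}$, the equation of state reduces to $p = K\theta^\gamma \varrho^\gamma$, so the pressure mean becomes $\mean{p} = K\theta^\gamma \mean{\varrho^\gamma}$. Second, the closed-form equilibrium~\eqref{closedformtheta} gives $\jump{\varrho^{\gamma-1}} = -\tfrac{\gamma-1}{K\gamma\theta^\gamma}\jump{\phi}$, which, inserted into the definition of the Stolarsky mean in~\eqref{properties_discrete_mean}, reproduces exactly the identity~\eqref{eqnonconservativetheta}, namely $\mean{\varrho}_\gamma \jump{\phi} = -K\theta^\gamma \jump{\varrho^\gamma}$. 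These are the analogues of the isothermal relations $\mean{p} = RT\mean{\varrho}$ and $\mean{\varrho}_\mathrm{log}\jump{\phi} = -RT\jump{\varrho}$ used in Lemma~\ref{lemmaTDGSEM}.

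After inserting both relations and factoring out the constant $K\theta^\gamma$, the discrete momentum balance acquires precisely the algebraic structure of the isothermal computation under the correspondence $\varrho \mapsto \varrho^\gamma$, $RT \mapsto K\theta^\gamma$, and $\mean{\varrho}_\mathrm{log} \mapsto \mean{\varrho}_\gamma$. The boundary contribution $-\mean{p}+p_R$ and the non-conservative surface term then combine with the volume sum through the SBP property $\vec{Q}+\vec{Q}^T=\vec{B}$ and the splitting $a_i = \mean{a}^\pm \mp \tfrac{1}{2}\jump{a}^\pm$; the surviving volume sum collapses because at equilibrium $p_j = K\theta^\gamma \varrho_j^\gamma$ reduces its bracketed argument to a node-wise identity, just as $p_j = RT\varrho_j$ does in the isothermal proof. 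The cases $i=N$ and general interior $i$ follow by the identical manipulation.

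The main obstacle is purely one of bookkeeping: one must confirm that the Stolarsky-mean identity~\eqref{eqnonconservativetheta} enters the telescoping at exactly the position occupied by the logarithmic-mean identity in Lemma~\ref{lemmaTDGSEM}, and that factoring out $K\theta^\gamma$ leaves the SBP cancellation intact. Once the correspondence $(\varrho, RT, \mean{\varrho}_\mathrm{log}) \leftrightarrow (\varrho^\gamma, K\theta^\gamma, \mean{\varrho}_\gamma)$ is made explicit, the statement follows as a direct restatement of the isothermal proof.
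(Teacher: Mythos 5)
Your proposal is correct and takes essentially the same approach as the paper, whose proof of this lemma is literally the statement that it follows the same steps as Lemma~\ref{lemmaTDGSEM}; your write-up fleshes out those steps accurately. In particular, the two substitutions you identify --- $\mean{p} = K\theta^\gamma \mean{\varrho^\gamma}$ from the constant-$\theta$ equation of state, and the Stolarsky-mean identity $\mean{\varrho}_\gamma \jump{\phi} = -K\theta^\gamma \jump{\varrho^\gamma}$ obtained by inserting \eqref{closedformtheta} into \eqref{properties_discrete_mean} (i.e., identity \eqref{eqnonconservativetheta}) --- are exactly what makes the isothermal computation transcribe under the correspondence $(\varrho, RT, \mean{\varrho}_\mathrm{log}) \leftrightarrow (\varrho^\gamma, K\theta^\gamma, \mean{\varrho}_\gamma)$, reducing each volume summand to the node-wise constant $K\theta^\gamma \varrho_i^\gamma$ that the differentiation matrix annihilates.
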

\begin{proof}
    The proof follows the same steps as the one of Lemma~\ref{lemmaTDGSEM}.
\end{proof}

\section{Extension to three-dimensional curvilinear meshes}
\label{sec:dgsem_curved}

In this work, we recall the main ingredients of the DGSEM curvilinear formulation, following closely the notation introduced in \cite{RUEDARAMIREZ2021110580}, see also \cite{GASSNER2016291,DERIGS2018420}.
On a three-dimensional curvilinear element, the semi-discrete DGSEM formulation for the advective and non-conservative contributions can be written in compact form as
\begin{equation}
    J_{ijk}\,\omega_{ijk}\,\dot{\vec{u}}_{ijk}
    = \vec{\mathcal{F}}_{ijk},
    \label{eq:dgsem3d}
\end{equation}
where $J_{ijk}$ denotes the mapping Jacobian, $\omega_{ijk}$ the quadrature weight at node $(i,j,k)$, and $\vec{\mathcal{F}}_{ijk}$ the discrete operator collecting all numerical flux contributions in the three spatial directions.

\medskip
An explicit expression of the operator reads
\begin{align}
    \vec{\mathcal{F}}_{ijk} = &
      \omega_{jk}\Bigg (-2 \sum_{m=0}^N Q_{im} \vec{\tilde{f}}^{1,\text{vol}}_{(i,m)jk}
      - \sum_{m=0}^N Q_{im} \vec{\tilde{g}}^{1}_{(i,m)jk}
      - \delta_{i0} \left [ \vec{\overset{\leftrightarrow}{f}} \cdot J\vec{a}^1 \right ]_{0jk}
      + \delta_{iN} \left [ \vec{\overset{\leftrightarrow}{f}} \cdot J\vec{a}^1 \right ]_{Njk} \nonumber \\
      & \hspace{0.9cm}
      -2 \sum_{m=0}^N Q_{jm} \vec{\tilde{f}}^{2,\text{vol}}_{i(j,m)k}
      - \sum_{m=0}^N Q_{jm}\,\vec{\tilde{g}}^{2}_{i(j,m)k}
      - \delta_{j0} \left [ \vec{\overset{\leftrightarrow}{f}} \cdot J\vec{a}^2 \right ]_{i0k}
      + \delta_{jN}\, \left [ \vec{\overset{\leftrightarrow}{f}} \cdot J\vec{a}^2 \right ]_{iNk} \nonumber \\
      & \hspace{0.9cm}
      -2 \sum_{m=0}^N Q_{km} \vec{\tilde{f}}^{3,\tilde{vol}}_{ij(k,m)}
      - \sum_{m=0}^N Q_{km}\vec{\tilde{g}}^{3}_{ij(k,m)}
      - \delta_{k0} \left [ \vec{\overset{\leftrightarrow}{f}} \cdot J\vec{a}^3\right ]_{ij0}
      + \delta_{kN}\, \left [\vec{\overset{\leftrightarrow}{f}} \cdot J\vec{a}^3\right ]_{ijN}
      \Bigg) \nonumber \\
      &+ \omega_{jk}\left ( \delta_{i0}\left [\vec{\tilde{f}}^*_{(0,L)jk}+\frac{1}{2}\vec{\tilde{g}}^*_{(0,L)jk} \right ]
      - \delta_{iN}\left [\vec{\tilde{f}}^*_{(N,R),jk}+\frac{1}{2}\vec{\tilde{g}}^*_{(N,R)jk} \right ] \right ) \nonumber \\
      &+ \omega_{ik}\left ( \delta_{j0} \left [\vec{\tilde{f}}^*_{i(0,L)k}+\frac{1}{2}\vec{\tilde{g}}^*_{i(0,L)k} \right ]
      - \delta_{jN} \left [\vec{\tilde{f}}^*_{i(N,R)k}+\frac{1}{2}\vec{\tilde{g}}^*_{i(N,R)k} \right ] \right ) \nonumber \\
      &+ \omega_{ij}\left ( \delta_{k0} \left [\vec{\tilde{f}}^*_{ij(0,L)}+\frac{1}{2}\vec{\tilde{g}}^*_{ij(0,L)}\right ]
      - \delta_{kN} \left [\vec{\tilde{f}}^*_{ij(N,R)}+\frac{1}{2}\vec{\tilde{g}}^*_{ij(N,R)} \right ] \right ).
    \label{DGSEM3D}
\end{align}
Here, $Q_{ij}=\omega_i D_{ij}$ is the SBP matrix, and $\vec{a}^m_{ijk}$ are the contravariant basis vectors linking the reference coordinates $(\xi^1,\xi^2,\xi^3)\in[-1,1]^3$ to the physical domain. The symbol $\tilde{\vec{g}}$ accounts for additional non-conservative terms.
The numerical two–point fluxes in the three directions are consistently defined with the metric terms, e.g.,
\begin{align}
    \vec{\tilde{f}}^{1,\text{vol}}_{(i,m)jk} &= \vec{\overset{\leftrightarrow}{f}}^{\text{vol}}(\vec{u}_{ijk}, \vec{u}_{mjk}) \cdot \mean{J\vec{a}^1}_{(i,m)jk}, \\
    \vec{\tilde{f}}^{2,\text{vol}}_{i(j,m)k} &= \vec{\overset{\leftrightarrow}{f}}^{\text{vol}}(\vec{u}_{ijk}, \vec{u}_{imk}) \cdot \mean{J\vec{a}^2}_{i(j,m)k}, \\
    \vec{\tilde{f}}^{3,\text{vol}}_{ij(k,m)} &= \vec{\overset{\leftrightarrow}{f}}^{\text{vol}}(\vec{u}_{ijk}, \vec{u}_{ijm}) \cdot \mean{J\vec{a}^3}_{ij(k,m)},
\end{align}
where we adopted the common notation $\vec{\overset{\leftrightarrow}{f}}$ to denote block vectors, containing a state vector in each spatial component (see, e.g., \cite{BOHM2020108076,winters2021dgsem})
\begin{equation}
\overset{\leftrightarrow}{\mathbf{f}} =
\begin{bmatrix}
\mathbf{f}_1 \\
\mathbf{f}_2 \\
\mathbf{f}_3
\end{bmatrix}, \quad
\overset{\leftrightarrow}{\mathbf{\vec{g}}} =
\begin{bmatrix}
\mathbf{\vec{g}}_1 \\
\mathbf{\vec{g}}_2 \\
\mathbf{\vec{g}}_3
\end{bmatrix},
\end{equation}
with each component defined as follows
\begin{equation}
\mathbf{\vec{f}}_1 =
\begin{pmatrix}
\varrho u \\
\varrho u^2 + p \\
\varrho u v \\
\varrho u w\\
\varrho \theta u
\end{pmatrix}, \quad
\mathbf{\vec{f}}_2 =
\begin{pmatrix}
\varrho v \\
\varrho v u \\
\varrho v^2 + p \\
\varrho v w \\
\varrho \theta v
\end{pmatrix}, \quad
\mathbf{\vec{f}}_3 =
\begin{pmatrix}
\varrho w \\
\varrho w u \\
\varrho w v \\
\varrho w^2 + p\\
\varrho \theta w
\end{pmatrix},
\end{equation}
\revA{
\begin{equation}
\mathbf{\vec{g}}_i =
\varrho \begin{pmatrix}
0\\
\delta_{i1} \\
\delta_{i2} \\
\delta_{i3} \\
0
\end{pmatrix} \circ\ \partial_{x_i} \begin{pmatrix}
0\\
\phi \\
\phi \\
\phi \\
0
\end{pmatrix} = \vec{r} \circ\ \partial_{x_i}\vec{\phi}, \, i = 1,2,3.
\end{equation}}
The non-conservative and anti-symmetric volume fluxes are defined as
\begin{align}
    \vec{\tilde{g}}^{1,\text{vol}}_{(i,m)jk} &= \vec{\overset{\leftrightarrow}{g}}^{\text{vol}}(\vec{u}_{ijk}, \vec{u}_{mjk}) \cdot \mean{J\vec{a}^1}_{(i,m)jk}, \\
    \vec{\tilde{g}}^{2,\text{vol}}_{i(j,m)k} &= \vec{\overset{\leftrightarrow}{g}}^{\text{vol}}(\vec{u}_{ijk}, \vec{u}_{imk}) \cdot \mean{J\vec{a}^2}_{i(j,m)k}, \\
    \vec{\tilde{g}}^{3,\text{vol}}_{ij(k,m)} &= \vec{\overset{\leftrightarrow}{g}}^{\text{vol}}(\vec{u}_{ijk}, \vec{u}_{ijm}) \cdot \mean{J\vec{a}^3}_{ij(k,m)}.
\end{align}
The surface numerical flux is defined similarly as
\begin{align}
    \vec{\tilde{f}}^{1,*}_{(i,m)jk} &= \vec{\overset{\leftrightarrow}{f}}^{*}(\vec{u}_{ijk}, \vec{u}_{mjk}) \cdot \mean{J\vec{a}^1}_{(i,m)jk}, \\
    \vec{\tilde{f}}^{2,*}_{i(j,m)k} &= \vec{\overset{\leftrightarrow}{f}}^{*}(\vec{u}_{ijk}, \vec{u}_{imk}) \cdot \mean{J\vec{a}^2}_{i(j,m)k}, \\
    \vec{\tilde{f}}^{3,*}_{ij(k,m)} &= \vec{\overset{\leftrightarrow}{f}}^{*}(\vec{u}_{ijk}, \vec{u}_{ijm}) \cdot \mean{J\vec{a}^3}_{ij(k,m)}.
\end{align}
\subsection{TEC and KPEP with curvilinear coordinates}
In this section, we consider the compressible Euler equations with the potential temperature and extend the property for TEC. The extension of the other results presented in the previous 1D Cartesian semi-discretization is straightforward, following the same approach, therefore we will only state them.
\begin{theorem}
    Given a semi-discretization of the compressible Euler equations as in~\eqref{DGSEM3D}, for the set of conserved variables $(\varrho, \varrho \vec{V}, \varrho \theta)$, a numerical non-conservative surface and volume fluxes, a numerical conservative surface and volume fluxes are TEC if
    \begin{enumerate}[label=(\roman*)]
    \item the numerical density flux and non-conservative fluxes are of the form
    \begin{equation}
    \vec{f}^{\varrho,*} = \vec{f}^{\varrho,\mathrm{vol}} = \overline{\varrho} \mean{\vec{V}},\quad \vec{g}^{\varrho \vec{V},*} = \vec{g}^{\varrho \vec{V}, \mathrm{vol}} = \overline{\varrho} \jump{\vec{\phi}},
\end{equation}
    \item \revB{and the volume and the surface fluxes $\vec{f}^\mathrm{vol}$, $\vec{f}^*$ are of the form~\eqref{TECflux} or ~\eqref{ETECflux}}.
\end{enumerate}
\end{theorem}
\begin{proof}
    Consider the first volume term in the $\xi_1$ direction and contract it in the entropy variable space, where the entropy variables are here denoted with $\vec{v}$, to avoid confusion with the weights $\omega_{ijk}$.
    \begin{equation}
      \sum_{i=0}^{N}\vec{v}^T_{ijk}\vec{VOL}^1 = \omega_{jk} \sum_{i=0}^{N}\vec{v}^T_{ijk}\left (-2 \sum_{m=0}^N Q_{im} \vec{\tilde{f}}^{1,\text{vol}}_{(i,m)jk}
      - \sum_{m=0}^N Q_{im} \vec{\tilde{g}}^{1}_{(i,m)jk}
      - \delta_{i0} \left [ \vec{\overset{\leftrightarrow}{f}}\cdot J\vec{a}^1 \right ]_{0jk}
      + \delta_{iN} \left [ \vec{\overset{\leftrightarrow}{f}}\cdot J\vec{a}^1 \right ]_{Njk} \right ).
    \end{equation}
    Applying the SBP property $\vec{Q}_{ij} = B_{ij} - Q_{ji} + Q_{ij}$, we can write
    \begin{equation}
        \begin{aligned}
        \sum_{i=0}^{N}\vec{v}^T_{ijk}\vec{VOL}^1 = &\omega_{jk} \sum_{i=0}^{N}\vec{v}^T_{ijk}\Bigg (-2 \sum_{m=0}^N \left ( B_{ij} - Q_{ji} + Q_{ij} \right )  \vec{\tilde{f}}^{1,\text{vol}}_{(i,m)jk}
      - \sum_{m=0}^N \left ( B_{ij} - Q_{ji} + Q_{ij} \right ) \vec{\tilde{g}}^{1}_{(i,m)jk} \\ &
      - \delta_{i0} \left [ \vec{\overset{\leftrightarrow}{f}}\cdot J\vec{a}^1 \right ]_{0jk}
      + \delta_{iN} \left [ \vec{\overset{\leftrightarrow}{f}}\cdot J\vec{a}^1 \right ]_{Njk} \Bigg ).
        \end{aligned}
    \end{equation}
    Because of the flux consistency on the diagonal elements and the definition of $\vec{B}$, we have
    \begin{equation}
        \sum_{i=0}^{N}\vec{v}^T_{ijk}\vec{VOL}^1 = \omega_{jk} \Bigg (- \sum_{i,m =0}^{N}\vec{v}^T_{ijk}\left ( - Q_{ji} + Q_{ij} \right )  \vec{\tilde{f}}^{1,\text{vol}}_{(i,m)jk}
      - \frac{1}{2}\sum_{i,m =0}^{N}\vec{v}^T_{ijk} \left (- Q_{ji} + Q_{ij} \right ) \vec{\tilde{g}}^{1}_{(i,m)jk} \Bigg ).
    \end{equation}
Here the proof closely follows the step for the 1D DGSEM; indeed, due to the symmetry and anti-symmetric properties of the conservative and non-conservative fluxes, after a rearrangement of the index, we are left with
 \begin{equation}
         \sum_{i=0}^{N}\vec{v}^T_{ijk}\vec{VOL}^1 = \omega_{jk} \Bigg (- \sum_{i,m =0}^{N} Q_{im}\left ( \vec{v}^T_{ijk} - \vec{v}^T_{mjk} \right ) \vec{\tilde{f}}^{1,\text{vol}}_{(i,m)jk}
      - \frac{1}{2}\sum_{i,m =0}^{N}Q_{im} \left ( \vec{v}^T_{ijk} + \vec{v}^T_{mjk} \right ) \vec{\tilde{g}}^{1}_{(i,m)jk} \Bigg ).
 \end{equation}
Applying the definition of TEC flux and the non-conservative flux, we can write
\begin{equation}
    \sum_{i=0}^{N}\vec{v}^T_{ijk}\vec{VOL}^1 = \omega_{jk} \Bigg (- \sum_{i,m =0}^{N} Q_{im}\mean{J\vec{a}}_{(i,m)jk} \cdot \left ( \vec{\psi}_{ijk} - \vec{\psi}_{mjk} \right ) \Bigg ).
\end{equation}
The difference now with the 1D Cartesian case is the presence of the metric terms. After some manipulations and re-indexing, the term simplifies to
\begin{equation}
     \sum_{i=0}^{N}\vec{v}^T_{ijk}\vec{VOL}^1 = \omega_{jk} \Bigg (- \sum_{i,m =0}^{N} Q_{im} \left ( J\vec{a}^1 \right )_{mjk} \cdot \vec{\psi}_{ijk} + \left (J\vec{a}^1 \right )_{0jk} \cdot \vec{\psi}_{0jk} -\left (J\vec{a}^1 \right )_{Njk} \cdot \vec{\psi}_{Njk}  \Bigg ).
\end{equation}
The remaining terms can be treated similarly, as well as the surface term, that we do not report here for brevity. We are left with
\begin{equation}
    -\sum_{i,j,k=0}^N\omega_{ijk} \vec{\psi}_{ijk} \cdot \sum_{m = 0}^N \left ( D_{im} \left (J\vec{a}^1 \right )_{mjk} + D_{jm} \left (J\vec{a}^2 \right ) _{imk} + D_{km} \left (J\vec{a}^3 \right )_{ijm} \right )  = 0,
\end{equation}
which is equal to zero if the discrete metric identity \cite{Kopriva2006} is satisfied.
\end{proof}
\begin{theorem}
    Given a semi-discretization of the compressible Euler equations as in~\eqref{DGSEM3D}, for the set of conserved variables $(\varrho, \varrho \vec{V}, \varrho E)$, a numerical non-conservative surface and volume fluxes, a numerical conservative surface and volume fluxes are TEC if the numerical density flux and the non-conservative fluxes are of the form
    \begin{equation}
    \vec{f}^{\varrho, *} = \vec{f}^{\varrho, \mathrm{vol}} = \overline{\varrho} \mean{\vec{V}},\quad \vec{g}^{\varrho \vec{V},*} = \vec{g}^{\varrho \vec{V}, \mathrm{vol}} = \overline{\varrho} \jump{\phi},\quad \vec{g}^{\varrho E,*} = \vec{g}^{\varrho E, \mathrm{vol}} = \vec{f}^\varrho \jump{\vec{\phi}}. 
\end{equation}
\end{theorem}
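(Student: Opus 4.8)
The plan is to follow the structure of the preceding curvilinear TEC theorem for $(\varrho, \varrho\vec{V}, \varrho\theta)$, but to exploit a decisive simplification: for the set $(\varrho, \varrho\vec{V}, \varrho E)$ the conserved total energy $U = \varrho E + \varrho\phi$ is \emph{linear} in the conserved variables, so the role of the entropy variables is played by the node-dependent covector
\begin{equation}
    \vec{c}_{ijk} = \phi_{ijk}\,\vec{e}_1 + \vec{e}_{\varrho E},
\end{equation}
which selects the density slot with weight $\phi$, the energy slot with weight one, and has zeros in the momentum slots. I would contract the semi-discretization~\eqref{DGSEM3D} with $\vec{c}_{ijk}$ and sum over all nodes; since $\sum_{ijk}\vec{c}_{ijk}^{T}\mtx{M}J\dot{\vec{u}}_{ijk} = J\dot{U}$, the left-hand side is exactly the elementwise rate of change of the total energy.

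I would then treat the conservative volume terms one reference direction at a time (say $\xi^1$), apply the SBP splitting $Q_{im} = B_{im} - Q_{mi} + Q_{im}$, discard the diagonal $\vec{B}$ contributions by flux consistency, and reindex using the symmetry of the two-point fluxes so that the interior collapses to $-\sum_{i,m} Q_{im}(\vec{c}_{ijk} - \vec{c}_{mjk})\cdot\vec{\tilde{f}}^{1,\mathrm{vol}}_{(i,m)jk}$. Because $\vec{c}_i - \vec{c}_m = (\phi_i - \phi_m)\vec{e}_1$ — the energy slots cancel and the momentum slots vanish — the conservative \emph{energy} flux $\vec{\tilde f}^{\varrho E}$ drops out of the interior entirely and telescopes straight to the surface through $\vec{B}$; only the density flux survives, weighted by $\phi_i - \phi_m$. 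Splitting $\phi_i = \mean{\phi}_{(i,m)} - \tfrac12\jump{\phi}_{(i,m)}$ separates this into a symmetric part $\mean{\phi}\,\vec{\tilde f}^{\varrho,\mathrm{vol}}$, which telescopes to the boundary via $\mtx{Q} + \mtx{Q}^{T} = \vec{B}$, and an anti-symmetric residual proportional to $\jump{\phi}\,\vec{\tilde f}^{\varrho,\mathrm{vol}}$.

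For the non-conservative volume terms, anti-symmetry of $\vec{g}$ turns the contraction into $\tfrac12\sum_{i,m} Q_{im}(\vec{c}_{ijk} + \vec{c}_{mjk})\cdot\vec{\tilde{g}}^{1}_{(i,m)jk}$. The momentum slots of $\vec{c}$ vanish, so the momentum non-conservative flux $\vec{g}^{\varrho\vec{V}} = \overline{\varrho}\jump{\vec{\phi}}$ drops out completely, and only the energy slot survives as $\sum_{i,m} Q_{im}\,\vec{\tilde{g}}^{\varrho E}_{(i,m)jk}$ with $\vec{g}^{\varrho E} = \vec{f}^{\varrho}\jump{\vec{\phi}}$. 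The decisive step — and the one I expect to be the main obstacle — is to verify that this term cancels the anti-symmetric $\jump{\phi}$-residual of the density term node-pairwise. Since $\vec{\tilde g}^{\varrho E}_{(i,m)} = \jump{\phi}\,\vec{\tilde f}^{\varrho,\mathrm{vol}}_{(i,m)}$ carries exactly the same contravariant metric mean $\mean{J\vec{a}^1}$ as the density flux, and since the prescribed choice $\vec{f}^{\varrho} = \overline{\varrho}\mean{\vec{V}}$ makes the two coincide, the cancellation is exact; this is precisely the curvilinear counterpart of the FV identity $S^{\varrho E} = \jump{\phi}\,f^{\varrho}$, and the only genuinely new work is tracking the metric weights through it.

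Collecting the surviving symmetric pieces with the surface contributions then yields a single telescoping conservative numerical total-energy flux, $H = \vec{\tilde f}^{\varrho E} + \mean{\phi}\,\vec{\tilde f}^{\varrho}$ in each direction, so that $J\dot{U}$ reduces to a pure flux difference across the element faces. A final remark I would make explicit is the contrast with the $(\varrho, \varrho\vec{V}, \varrho\theta)$ case: there the nonlinear entropy leaves a remainder of the form $\sum_{ijk}\omega_{ijk}\,\vec{\psi}_{ijk}\cdot\sum_m(D_{im}(J\vec{a}^1)_{mjk} + D_{jm}(J\vec{a}^2)_{imk} + D_{km}(J\vec{a}^3)_{ijm})$ that must be killed by the discrete metric identity, whereas here the flux potential of the linear total energy vanishes identically, $\vec{\psi}\equiv\vec{0}$, so that remainder is absent and no appeal to the discrete metric identity (free-stream preservation) is needed. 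This establishes the TEC property.
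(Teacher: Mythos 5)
Your proof is correct and follows essentially the route the paper intends: the paper states this theorem without an explicit proof (deferring to the ``same steps'' as the curvilinear $(\varrho,\varrho\vec{V},\varrho\theta)$ theorem and the corresponding 1D FV lemma), and your contraction with the linear covector $\phi\,\vec{e}_1+\vec{e}_{\varrho E}$, the SBP splitting, the symmetry/anti-symmetry reindexing, and the node-pairwise cancellation of the $\jump{\phi}$-weighted density residual against $\vec{g}^{\varrho E}=\vec{f}^{\varrho}\jump{\phi}$ (which carries the same metric mean $\mean{J\vec{a}^m}$) is exactly that argument carried out on \eqref{DGSEM3D}. Your closing observation---that linearity of $\varrho E+\varrho\phi$ makes the flux potential vanish identically, so that, unlike the potential temperature case, no appeal to the discrete metric identity is needed---is correct and is a sharpening the paper leaves implicit.
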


\begin{theorem}
    Given a semi-discretization of the compressible Euler equations as in~\eqref{DGSEM3D}, for the set of conserved variables $(\varrho, \varrho \vec{V}, \varrho E)$ or $(\varrho, \varrho \vec{V}, \varrho \theta)$, a numerical non-conservative surface and volume fluxes, a numerical conservative surface and volume fluxes are KPEP if \begin{enumerate}[label=(\roman*)]
    \item the numerical density flux and the non-conservative fluxes are of the form
    \begin{equation}
    \vec{f}^{\varrho, *} = \vec{f}^{\varrho, \mathrm{vol}} = \overline{\varrho}\mean{\vec{V}}, \quad\vec{g}^{\varrho \vec{V},*} = \vec{g}^{\varrho \vec{V}, \mathrm{vol}} = \overline{\varrho} \jump{\vec{\phi}},
\end{equation}
    \item and the numerical \revB{volume and surface fluxes are} KEP.
\end{enumerate}

\end{theorem}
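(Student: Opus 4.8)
The plan is to mirror the proof of the preceding 3D TEC theorem, but to contract the semi-discretization~\eqref{DGSEM3D} against the kinetic-plus-potential-energy variables rather than the total-energy entropy variables. The kinetic plus potential energy density is $\frac{1}{2}\varrho\norm{\vec{V}}^2 + \varrho\phi$, and its gradient with respect to the conserved variables $(\varrho, \varrho\vec{V}, \cdot)$ is $\vec{w} = (-\frac{1}{2}\norm{\vec{V}}^2 + \phi,\, \vec{V},\, 0)$. Contracting the semi-discretization against $\vec{w}$ therefore yields the discrete rate of change of the total kinetic plus potential energy. As in the TEC proof, I would split $\vec{w} = \vec{w}_{\mathrm{kin}} + \phi\,\vec{e}_1$ with $\vec{w}_{\mathrm{kin}} = (-\frac{1}{2}\norm{\vec{V}}^2,\, \vec{V},\, 0)$, so that the potential-energy piece is linear in $\phi$ and couples only to the density flux and to the $\varrho\vec{V}$-component of the non-conservative flux. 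This is precisely why the closure variable ($\varrho E$ or $\varrho\theta$) never enters the kinetic-plus-potential-energy balance, so that a single statement covers both cases.

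First I would carry out the curvilinear contraction of the kinetic part exactly as in the TEC theorem: apply the SBP decomposition $Q_{im} = B_{im} - Q_{mi} + Q_{im}$, cancel the diagonal (consistent) contributions using the definition of $\vec{B}$, and use the symmetry of $\vec{f}^{\mathrm{vol}}$ together with the anti-symmetry of $\vec{g}^{\mathrm{vol}}$ to rewrite the volume terms as jumps $(\vec{w}_{\mathrm{kin},i} - \vec{w}_{\mathrm{kin},m})$ paired with the conservative flux and sums $(\vec{w}_{\mathrm{kin},i} + \vec{w}_{\mathrm{kin},m})$ paired with the non-conservative flux. Next I would evaluate the kinetic-energy contraction of the momentum flux, and this is where the KEP hypothesis enters: using $\vec{f}^{\mathrm{num}}_{\varrho\vec{V}} = \mean{\vec{V}}\, f^{\mathrm{num}}_\varrho + \mean{p}$, the $\vec{V}$-component pairing produces a kinetic-energy flux potential plus a pressure-work contribution, while the $-\frac{1}{2}\norm{\vec{V}}^2$ pairing with the prescribed density flux $\overline{\varrho}\mean{\vec{V}}$ supplies exactly the complementary term.

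I expect the main obstacle to be the bookkeeping in this kinetic-energy evaluation: unlike the strict entropy case, there is no exact flux-potential cancellation, and one must verify that the KEP form leaves only the physically correct pressure work $\mean{p}$ term, with no leftover spurious generation of kinetic energy in the volume after the metric-weighted differentiation. The potential-energy contributions are easier to handle: the density flux of the prescribed form $\overline{\varrho}\mean{\vec{V}}$ and the non-conservative flux $\overline{\varrho}\jump{\vec{\phi}}$ share the common factor $\overline{\varrho}$, which is exactly what makes their spurious volume terms cancel and collapse into telescoping boundary contributions, just as in the 1D FV and 1D DGSEM KPEP lemmas.

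Finally, as in the TEC theorem, the remaining volume term vanishes by the discrete metric identity~\cite{Kopriva2006}, namely $\sum_{m} \big( D_{im}(J\vec{a}^1)_{mjk} + D_{jm}(J\vec{a}^2)_{imk} + D_{km}(J\vec{a}^3)_{ijm}\big) = 0$, leaving only surface (boundary) fluxes together with the correct pressure work. Since the argument is symmetric in the three coordinate directions and the surface terms are treated analogously, the KPEP property follows for both sets of conserved variables, and I would omit the routine surface bookkeeping for brevity, noting that it reduces to the 1D case already established.
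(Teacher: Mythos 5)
Your proposal is correct and takes essentially the same route the paper intends: the paper states this theorem without a written proof, remarking that the extension of the 1D KPEP lemma is straightforward "following the same approach" as the curvilinear TEC theorem, and your argument is exactly that combination --- contracting \eqref{DGSEM3D} with the kinetic-plus-potential-energy gradient $\left(-\tfrac{1}{2}\norm{\vec{V}}^2 + \phi,\ \vec{V},\ 0\right)$ (whose vanishing closure component is why one statement covers both $\varrho E$ and $\varrho\theta$), then reusing the SBP decomposition, the symmetric/anti-symmetric pairings, the cancellation driven by the common factor $\overline{\varrho}$ in $\vec{f}^{\varrho}=\overline{\varrho}\mean{\vec{V}}$ and $\vec{g}^{\varrho\vec{V}}=\overline{\varrho}\jump{\vec{\phi}}$, and the discrete metric identity. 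Your remark that the KEP hypothesis leaves only the physical pressure-work contribution $\mean{p}\,\jump{\vec{V}}$ in the volume (rather than an exact flux-potential cancellation) is the correct discrete analogue of the $p\,\partial_x v$ term appearing in the paper's 1D KPEP lemma, so no gap remains.
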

\subsection{Well-balancedness with curvilinear coordinates}
\begin{theorem}
    Given a semi-discretization of the compressible Euler equations as in \eqref{DGSEM3D}, for the set of conserved variables $(\varrho, \varrho \vec{V}, X)$, where $X$ represents any conserved variable that closes the system, a non-conservative numerical surface and volume flux $\vec{g}$ preserve the hydrostatic balance prescribed by a constant background temperature $T$ under any coordinate mapping if
    \begin{enumerate}[label=(\roman*)]
        \item the non-conservative volume flux is of the form
        \begin{equation}
    \vec{g}^{\varrho \vec{V},\text{vol}} = \mean{\varrho}_{\log} \jump{\vec{\phi}},
\end{equation}
\item the pressure term is discretized as $\mean{p}$,
        \item and the free-stream preservation property is satisfied at the discrete level.
    \end{enumerate}
\end{theorem}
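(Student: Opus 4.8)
The plan is to reuse the observation, already exploited in the one-dimensional lemmas, that the hydrostatic balance is governed solely by the momentum equation. At the steady state the velocity vanishes, $\vec{V} = \vec{0}$, so every advective contribution in the conservative volume and surface fluxes $\vec{\tilde{f}}^{m}$ as well as in the non-conservative fluxes $\vec{\tilde{g}}^{m}$ drops out, and the momentum rows of the operator $\vec{\mathcal{F}}_{ijk}$ in~\eqref{DGSEM3D} retain only the pressure flux (discretized as $\mean{p}$) and the geopotential source (discretized as $\mean{\varrho}_{\log}\jump{\phi}$). I would therefore insert the isothermal data~\eqref{IsothermalSolution} into these surviving terms and show that the result vanishes node by node.

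The first step is to translate the continuous balance into two exact discrete identities. Since $p = RT\varrho$ holds pointwise, linearity gives $\mean{p} = RT\mean{\varrho}$ and $\jump{p} = RT\jump{\varrho}$; and from $\log\varrho = \mathrm{const} - \phi/(RT)$ we obtain $\jump{\log\varrho} = -\jump{\phi}/(RT)$. Combining the latter with the defining property of the logarithmic mean, $\mean{\varrho}_{\log}\jump{\log\varrho} = \jump{\varrho}$, yields the key relation
\begin{equation}
    \mean{\varrho}_{\log}\jump{\phi} = -RT\,\jump{\varrho} = -\jump{p}.
\end{equation}
Thus the non-conservative source is exactly the negative jump of the pressure, while the conservative flux supplies its arithmetic mean.

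Next I would combine the two contributions using the nodal splitting $p_i = \mean{p}_{(i,m)} - \tfrac{1}{2}\jump{p}_{(i,m)}$, which is the scalar form of the anti-symmetric decomposition $\jump{\vec{g}}_{(j,k)} = 2\mean{\vec{g}}_{(j,k)} - 2\vec{g}_j$ introduced for the flux-differencing reformulation. The symmetric $\mean{p}$-parts cancel against the source, exactly as in the one-dimensional Lemma~\ref{lemmaTDGSEM}, so that each two-point momentum contribution collapses to a single nodal pressure weighted by the averaged contravariant metric $\mean{J\vec{a}^m}$. Applying the SBP relation $Q_{ij}=\omega_i D_{ij}$, collecting the volume and surface boundary terms in the three reference directions, and re-indexing as in the curvilinear TEC theorem, the entire momentum residual reduces to the local pressure multiplying the discrete divergence of the contravariant basis vectors,
\begin{equation}
    p_{ijk}\sum_{m=0}^N \left( D_{im}\left(J\vec{a}^1\right)_{mjk} + D_{jm}\left(J\vec{a}^2\right)_{imk} + D_{km}\left(J\vec{a}^3\right)_{ijm}\right).
\end{equation}

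Finally, the bracketed sum is precisely the quantity controlled by the free-stream preservation property, i.e.\ the discrete metric identities of~\cite{Kopriva2006}, which by hypothesis~(iii) vanishes at every node; hence the momentum residual is identically zero and the hydrostatic state is preserved under any admissible mapping. I expect the main obstacle to be exactly this bookkeeping of the metric terms across the three reference directions together with the surface contributions: in the Cartesian setting of Lemma~\ref{lemmaTDGSEM} the metrics are trivial and consistency of the reduced state is automatic, whereas in the curvilinear case the genuinely new point is to show that, after the isothermal substitution, all volume and surface terms reorganize into the single free-stream metric sum, so that well-balancedness holds if and only if free-stream preservation does.
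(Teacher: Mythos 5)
Your proposal is correct and follows essentially the same route as the paper's proof: insert the isothermal hydrostatic state, use the logarithmic-mean identity to rewrite $\mean{\varrho}_{\log}\jump{\phi}=-RT\jump{\varrho}=-\jump{p}$ so that each two-point momentum contribution collapses to the local nodal pressure times the averaged metric, and conclude via the discrete metric identities guaranteed by free-stream preservation. The only cosmetic difference is that the paper first eliminates the interface terms by invoking continuity of $\phi$ (hence of $p$ and $\varrho$) across element boundaries, whereas your unified collapse $\mean{p}-\tfrac{1}{2}\jump{p}=p_i$ absorbs the surface contributions automatically.
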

\begin{proof}
For sake of simplicity and brevity, here we present the proof for 2D case only in the $x$ momentum.

We start by observing that the surface non-conservative fluxes are zeros, because of the continuity of the potential along the interfaces, i.e.,
 \begin{equation}
     \phi_{0j}^R = \phi_{Nj}, \quad \phi_{0j} = \phi_{Nj}^L, \quad \phi_{i0}^R = \phi_{iN}, \quad \phi_{i0} = \phi_{iN}^L.
 \end{equation}
Therefore, the semidiscrete formulation can be rewritten at the hydrostatic balance for the horizontal momentum equation as
  \begin{equation}
     \begin{aligned}
    J_{ij}\dot{\left (\varrho u\right )}_{ij} &+ \frac{1}{\omega_i}\left ( Ja^1_{1,Nj} \left ( \mean{p}_{(N,R)j}-  p_{Nj}\right ) \delta_{iN} - Ja^1_{1,0j}\left ( \mean{p}_{(0,L)} - p_{0j} \right ) \delta_{i0} \right ) \\ &+ \frac{1}{\omega_j} \left ( Ja^2_{1,iN} \left ( \mean{p}_{i,(N,R)} - p_{iN}\right )  \delta_{jN} - Ja^2_{1,i0} \left( \mean{p}_{i,(0,L)} - p_{i0} \right) \delta_{j0} \right )\\
    & + \sum_{k=0}^N 2 D_{ik} \mean{Ja^1_1}_{(i,k)j} \left (\mean{p}_{(i,k)j} + \frac{1}{2} \mean{\varrho}_{\log,(i,k)j} (\phi_{kj} - \phi_{ij}) \right )  \\
    &+\sum_{k=0}^N  2D_{jk} \mean{Ja^2_1}_{i(j,k)} \left (\mean{p}_{(i(j,k))} + \frac{1}{2}\mean{\varrho}_{\log,i(j,k)}(\phi_{ik} - \phi_{ij}) \right ) = 0.
    \end{aligned}
 \end{equation}
 The analytical solution of an isothermal hydrostatic flow are
 \begin{equation}
     p = p_0 e^{-\phi/RT} \quad \varrho = \frac{p_0}{RT}e^{-\phi/RT}.
 \end{equation}
Considering the first surface conservative term, we have
\begin{equation}
    \frac{p_{Nj}}{2} - \frac{p_{0j}^R}{2} = \frac{p_0}{2}\left ( e^{-\phi_{Nj}/RT} - e^{-\phi_{0j}^R/RT} \right ) = 0,
\end{equation}
which is again zero for the continuity of the potential along the interfaces. The same applies to all the remaining surface terms.

For the volume terms we just consider the first one
\begin{equation}
\text{VOL}^1 = \sum_{k=0}^N 2 D_{ik} \mean{Ja^1_1}_{(i,k)j} \left (\frac{p_{kj} + p_{ij}}{2} + \frac{1}{2} \frac{\varrho_{kj} - \varrho_{ij}}{\mathrm{log}(\varrho_{kj}) - \mathrm{log}(\varrho_{ij})}(\phi_{kj} - \phi_{ij}) \right ).
\end{equation}
From the hydrostatic balance, we derive $\jump{\mathrm{log}(\varrho)} = -\frac{\jump{\phi}}{RT}$
thus
\begin{equation}
\text{VOL}^1 = \sum_{k=0}^N 2 D_{ik} \mean{Ja^1_1}_{(i,k)j} \left (RT \frac{\varrho_{kj} + \varrho_{ij}}{2} - RT \frac{1}{2} (\varrho_{kj} - \varrho_{ij}) \right ) = \sum_{k=0}^N 2 D_{ik} \mean{Ja^1_1}_{(i,k)j} RT \varrho_{ij}.
\end{equation}
The second volume term is analogous
\begin{multline}
\text{VOL}^2 = \sum_{k=0}^N  2D_{jk} \mean{Ja^2_1}_{i(j,k)} \left (\frac{p_{ik} + p_{ij}}{2} + \frac{1}{2}\frac{\varrho_{ik} - \varrho_{ij}}{\mathrm{log}(\varrho_{ik}) - \mathrm{log}(\varrho_{ij})}(\phi_{ik} - \phi_{ij}) \right )
\\
=    \sum_{k=0}^N  2D_{jk} \mean{Ja^2_1}_{i(j,k)} RT \varrho_{ij}.
\end{multline}
Summing up all the volume terms
\begin{equation}
    \text{VOL} = \text{VOL}^1 + \text{VOL}^2 = 2RT \varrho_{ij} \sum_{k=0}^N \left ( D_{ik} \mean{Ja_1^1}_{(i,k),j} + D_{jk} \mean{J a_1^2}_{i(j,k)} \right )  =0,
\end{equation}
which is equal to zero, due to the free-stream preservation property.
\end{proof}
\begin{theorem}
    Given a semi-discretization of the compressible Euler equations as in \eqref{DGSEM3D}, for the set of conserved variables $(\varrho, \varrho \vec{V}, X)$, where $X$ represents any conserved variable that closes the system, a non-conservative numerical volume and surface flux $\vec{g}$ preserve the hydrostatic balance prescribed by a constant background potential temperature $\theta$ under any coordinate mapping if
    \begin{enumerate}[label=(\roman*)]
        \item the non-conservative volume flux is of the form
        \begin{equation}
     \vec{g}^{\varrho \vec{V},\mathrm{vol}} = \mean{\varrho}_\gamma \jump{\vec{\phi}},
\end{equation}
\item the pressure term is discretized as $\mean{p}$,
        \item and the free-stream preservation property is satisfied at the discrete level.
    \end{enumerate}
\end{theorem}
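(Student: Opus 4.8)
The plan is to mirror the proof of the preceding theorem for the isothermal background state, substituting the Stolarsky mean $\mean{\varrho}_\gamma$ for the logarithmic mean $\mean{\varrho}_{\log}$ and the constant-$\theta$ hydrostatic relation for the isothermal one. As in that proof, I would restrict to the 2D case and to the horizontal momentum equation; the remaining components and the full 3D case follow by the same manipulations.

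First I would dispose of the surface contributions. The surface non-conservative fluxes vanish because the geopotential $\phi$ is continuous across element interfaces, so every interface jump $\jump{\phi}$ is zero, exactly as in the isothermal argument. For the surface conservative pressure terms, I would invoke the closed-form steady-state solution \eqref{closedformtheta}, which expresses $\varrho$, and hence $p = K(\varrho\theta)^\gamma$, as a continuous function of $\phi$; continuity of $\phi$ then forces the pressure differences $\mean{p} - p$ at each interface to cancel.

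The heart of the argument is the volume term. Writing the first volume contribution as in the isothermal proof, the bracket paired with $2D_{ik}\mean{Ja^1_1}_{(i,k)j}$ is $\mean{p}_{(i,k)j} + \tfrac{1}{2}\mean{\varrho}_{\gamma}(\phi_{kj} - \phi_{ij})$. I would use the algebraic identity already established for the finite-volume scheme in \eqref{eqnonconservativetheta}, namely $\mean{\varrho}_\gamma \jump{\phi} = -K\theta^\gamma \jump{\varrho^\gamma}$ at the constant-$\theta$ steady state, together with $\mean{p} = K\theta^\gamma \mean{\varrho^\gamma}$. Substituting these and using $\mean{\varrho^\gamma} = (\varrho_{kj}^\gamma + \varrho_{ij}^\gamma)/2$ collapses the bracket to the node-local value $K\theta^\gamma \varrho_{ij}^\gamma = p_{ij}$, independent of the summation index $k$. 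The identical simplification applies to the second volume term.

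Finally, factoring out this constant leaves $\text{VOL} = 2K\theta^\gamma \varrho_{ij}^\gamma \sum_{k}(D_{ik}\mean{Ja_1^1}_{(i,k)j} + D_{jk}\mean{Ja_1^2}_{i(j,k)})$, which vanishes by the free-stream (discrete metric identity) property. The main obstacle is purely bookkeeping: tracking the metric factors $\mean{Ja^m}$ through the Stolarsky-mean substitution and confirming that the free-stream cancellation is not disturbed by the curvilinear weights. Once the key nodal identity \eqref{eqnonconservativetheta} is recognized to hold verbatim between any pair of nodes, the remainder is a direct transcription of the isothermal argument.
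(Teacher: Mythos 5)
Your proposal is correct and is exactly the argument the paper intends: the paper's own proof consists of the single remark that it is ``conceptually analogous to the previous theorem,'' and you have carried out that analogy faithfully --- surface terms vanish by continuity of $\phi$ (and of $p$ as a function of $\phi$ along the steady state), the volume bracket collapses to the nodal value $p_{ij} = K\theta^\gamma\varrho_{ij}^\gamma$ via the identity $\mean{\varrho}_\gamma\jump{\phi} = -K\theta^\gamma\jump{\varrho^\gamma}$ from \eqref{eqnonconservativetheta} together with $\mean{p} = K\theta^\gamma\mean{\varrho^\gamma}$, and the remaining metric sum vanishes by discrete free-stream preservation. No gaps.
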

\begin{proof}
The proof is conceptually analogous to the previous theorem.
\end{proof}

\section{Numerical results}
\label{sec:numerical_results}
Next, we present numerical results for both set of equations~\eqref{EulerTotalEnergy},~\eqref{EulerPotentialTemperature}, assessing the theoretical findings described in the previous sections.
We have implemented our novel methods in Julia \cite{bezanson2017julia} using Trixi.jl~\cite{ranocha2022adaptive,schlottkelakemper2021purely}.
The time integration method is SSPRK43~\cite{kraaijevanger1991contractivity} implemented in OrdinaryDiffEq.jl~\cite{rackauckas2017differentialequations} and, unless stated otherwise, the fixed Courant number (CFL) is taken to be 1. \revB{The time step size is computed as
\begin{equation}
\Delta t = \text{CFL} \frac{2}{p + 1} \min_{i} \Bigl ( \frac{\Delta x_i}{\lambda_{\max} ( \vec{u}_i)} \Bigr ),
\label{time_step_dt}
\end{equation}
where the largest-magnitude eigenvalue of the flux Jacobian $\lambda_{\max} ( \vec{u}_i)$ and the local grid spacing $\Delta x_i$ are computed using the local curvilinear grid information, see~\cite{ranocha2025error,ranocha2021optimized,jahdali2021optimized} for further details.}
The conservative surface flux, except for the test cases to assess the conservation of entropy and total energy, is computed with the low Mach number approximate Riemann solver (LMARS) numerical flux~\cite{AControlVolumeModeloftheCompressibleEulerEquationswithaVerticalLagrangianCoordinate}. \revB{The volume conservative fluxes are computed with the EC numerical flux of Ranocha \cite{ranocha2018thesis} for the conserved variables $(\varrho,\varrho v, \varrho E)$ and with the TEC numerical flux for the conserved variables $(\varrho, \varrho v, \varrho \theta)$, with the logarithmic mean for the density in the density flux, if not stated otherwise.}
We used Makie.jl~\cite{danisch2021makie} and matplotlib~\cite{hunter2007matplotlib} for visualization.
All source code required to reproduce the numerical results presented here is available online in our reproducibility repository \cite{artiano2025structureRepro}.

The results are presented as follows: first, we assess the conservation of entropy and total energy for the potential temperature formulation. We then show numerical results for well-balanced schemes on curvilinear meshes, followed by a convergence analysis on Cartesian and curvilinear meshes for the inertia–gravity waves test case. A comparison between the total energy and potential temperature formulations is then carried out on mountain-wave test cases, including the linear hydrostatic, linear non-hydrostatic, and Schär mountain. Finally, the new formulation is tested on the baroclinic instability test case on a cubed-sphere mesh.

\subsection{Conservation of entropy and total energy}
First, we analyze the conservation properties of the newly derived numerical fluxes in the absence of a geopotential term. The semi-discretization is performed via FV method, hence the polynomial degree of DGSEM discretization is zero.
A well-known and famous test case to assess PEP property is the density wave, as in~\cite{Ranocha2022, Shima2021}. To avoid particular symmetries of the $\sin$ function as described in \cite{DEMICHELE2023112439}, we employ the initial condition
\begin{equation}
	(\varrho(x,0), v(x,0), p(x,0)) = (1 + \exp{\sin(2 \pi x)}, 1, 1).
\end{equation}
The solution is computed on the domain $[0, 1]$, over a time interval $[0, 40]$, with a $\text{CFL} = 0.01$. The domain is discretized with 64 cells and periodic boundary conditions are prescribed on the left and right boundaries.
\begin{figure}[h!]
    \centering
    \includegraphics[width=1.0\linewidth]{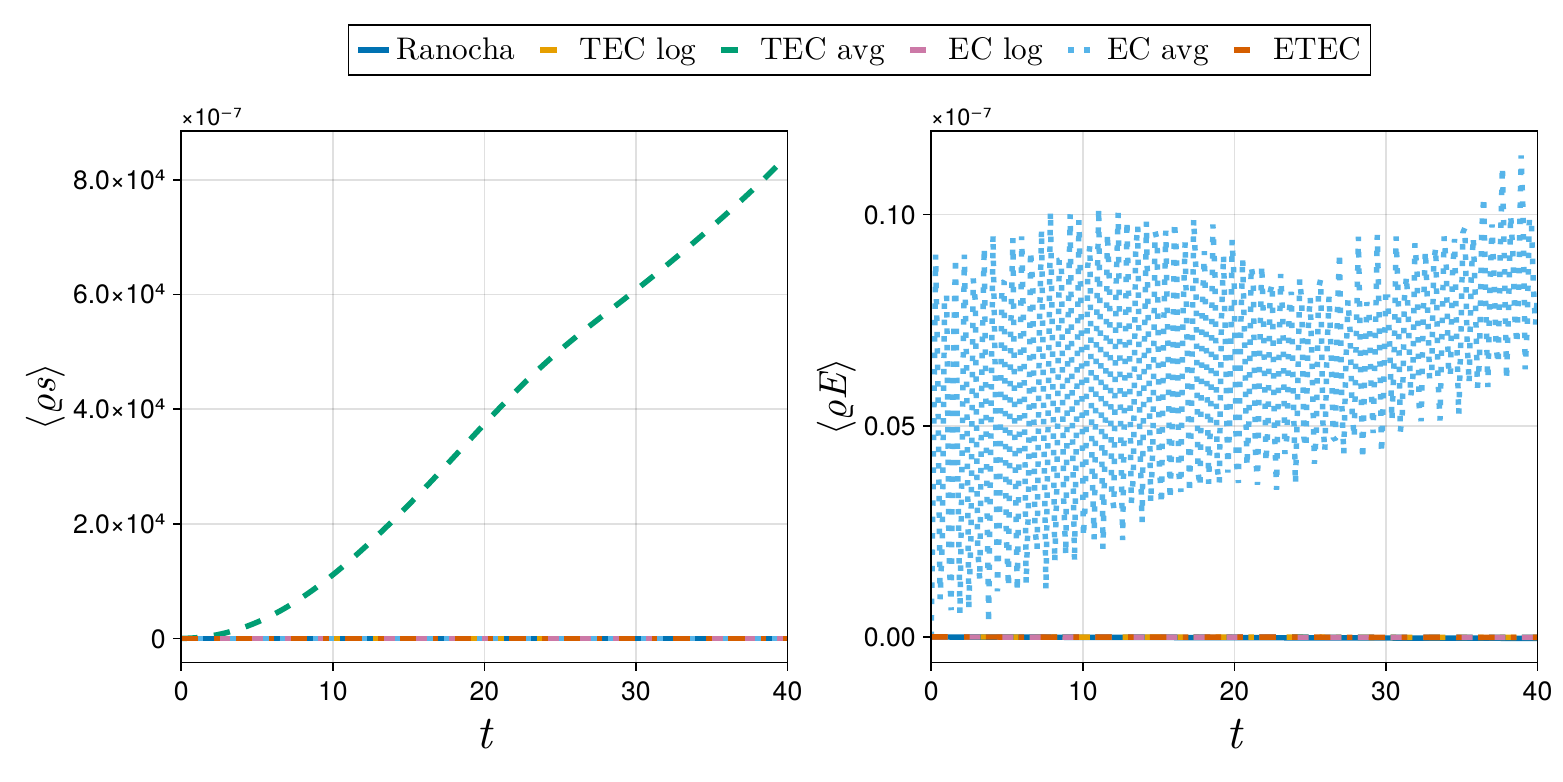}
    \caption{Evolution of the entropy (left) and total energy (right) integrals, normalized with respect to the initial value for the density wave test case using different numerical fluxes and the $\text{CFL} = 0.01$.}
    \label{dw_comparison}
\end{figure}
The EC, TEC and ETEC numerical fluxes for the set of conserved variables $(\varrho, \varrho v, \varrho \theta)$ are compared with different choice of the density mean in the density flux $f^{\varrho}$ with the EC numerical flux of Ranocha \cite{ranocha2018thesis} for the conserved variables $(\varrho,\varrho v, \varrho E)$.
The evolution of the entropy and total energy integrals can be seen in Figure~\ref{dw_comparison}.
The entropy conservation results show that the TEC numerical flux is not EC, when the density mean is discretized with the central mean.
However, when the logarithmic mean is used, the TEC is able to conserve the entropy up to round-off errors.
This can be explained considering that in this test case the velocity $v$ and the pressure $p$ are constants.
In fact, the numerical flux for the potential temperature reduces to $f^{\varrho \theta} = \varrho \theta v$ for $p = \text{const}$ and $v = \text{const}$. Thus, recalling Lemma~\ref{ECconstantvp}, the EC condition \eqref{ECCondition} is satisfied
\begin{equation}
    f^{\varrho} = \mean{\varrho}_{\log} \frac{f_{\varrho \theta}}{\varrho \theta} = \mean{\varrho}_{\log} v.
\end{equation}
For the conservation of the total energy $\varrho E$, as stated by the Lemma~\ref{TECconstantvp}, all the presented numerical fluxes are TEC for constants $p$ and $v$.
The divergent behaviour of the EC flux with the arithmetic mean for the density in the density flux is due to the lack of PEP property.
Indeed, Lemma~\ref{Lemma55} implies that an EC numerical flux with the arithmetic density mean is not PEP. For this reason, in the next sections we will omit the mean value used in the degree freedom of the EC and TEC flux, as it always will be logarithmic mean.

A more involved test case is the three-dimensional inviscid Taylor-Green vortex, which is a canonical benchmark for the transition to turbulence and therefore also the creation of small underesolved scales.
The initial conditions are given by~\cite{gassner2016split}
\begin{equation}
\centering
    \begin{aligned}
            \varrho(\vec{x},0) &= 1,\\
            u(\vec{x},0) &= \sin(x) \cos(y) \cos(z),\\
            v(\vec{x},0) &= -\cos(x) \sin(y) \cos(z),\\
            w(\vec{x},0) &= 0,\\
            p(\vec{x},0) &= 10 + \frac{(\cos(2x) + \cos(2y))(\cos(2x) + 2)-2}{16}.
    \end{aligned}
\end{equation}
The solution is computed on the domain $[0, 2 \pi]^3$, over a time interval $[0, 50]$, with a $\text{CFL} = 0.01$. The domain is discretized with 32 cells in each direction and periodic boundary conditions are prescribed in each direction.
\begin{figure}[h!]
    \centering
    \includegraphics[width=1.0\linewidth]{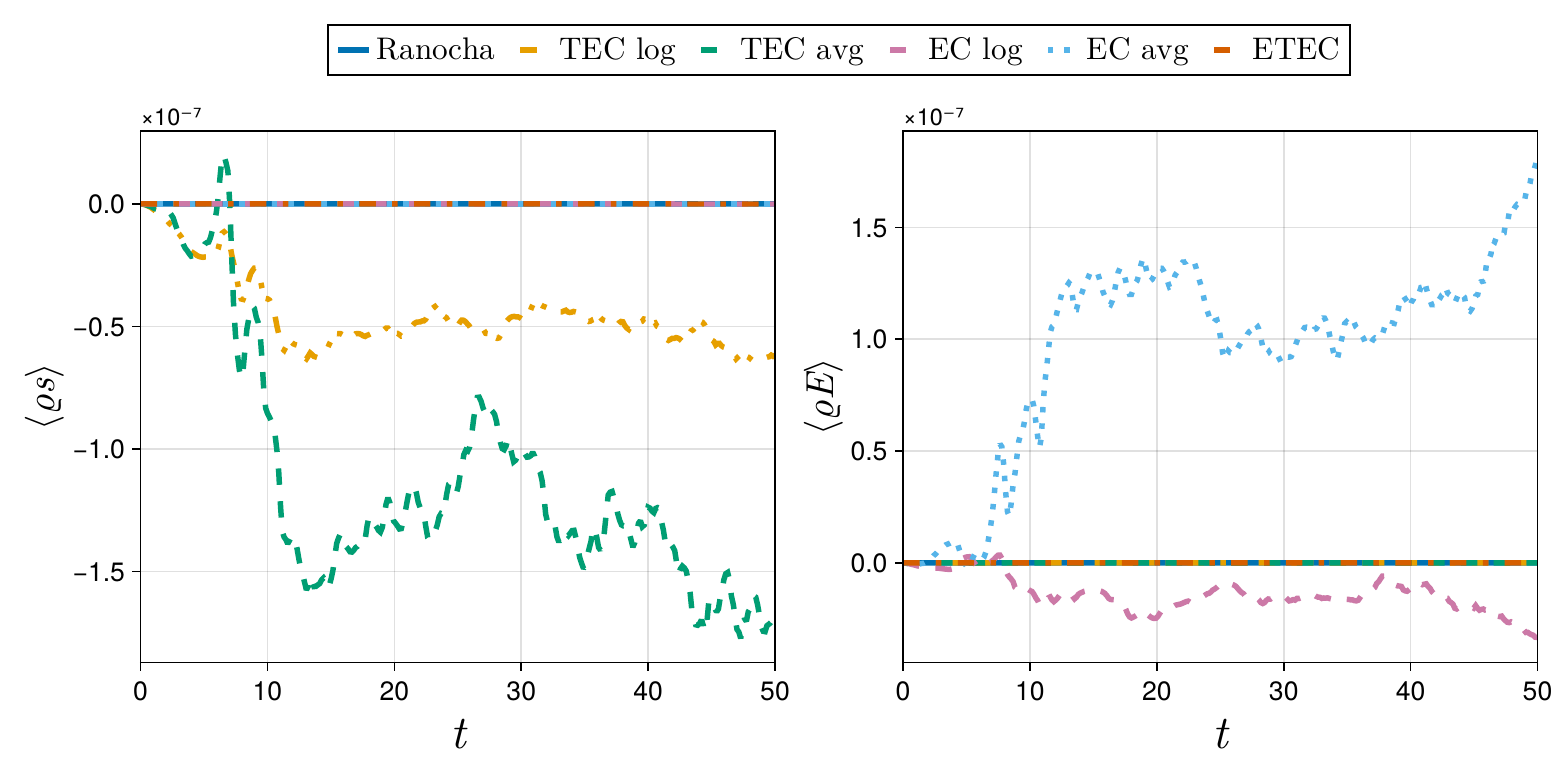}
    \caption{Evolution of the entropy (left) and total energy (right) integrals, normalized with respect to the initial value for the Taylor-Green Vortex test case using different numerical fluxes and the $\text{CFL} = 0.01$. }
    \label{tgv_comparison}
\end{figure}
The numerical fluxes ETEC and EC with arithmetic mean and logarithmic mean for the density show numerical conservation of the entropy up to round-off errors for long-time simulations, as shown in Figure~\ref{tgv_comparison}. 
The TEC fluxes clearly are not able to conserve the entropy and the use of a mean over the other does not present any particular benefits.
For the conservation of the total energy $\varrho E$, the numerical TEC fluxes with different density mean averages and the ETEC flux are able to conserve the total energy over long simulations.

\subsection{Well-balancedness test case}
In this section, we assess the well-balancing properties of the proposed numerical fluxes on curvilinear mesh. To this end, we consider two canonical hydrostatic background states: an isothermal atmosphere and a constant potential temperature (adiabatic) atmosphere.

The computational domain is discretized into $16 \times 16$ elements, with a polynomial degree of $2$. Time integration is carried out with a fixed time step of $\Delta t = 0.01$.

The initial conditions for the isothermal and adiabatic background states are defined as follows. For the isothermal case, the density and pressure fields satisfy the hydrostatic balance under a constant temperature $T_0 = \SI{250}{K}$, while the velocity field is initially at rest. For the adiabatic atmosphere, the potential temperature $\theta = \SI{300}{K}$ is constant, and the pressure and density are computed to satisfy hydrostatic equilibrium.
We employ a smooth curvilinear mapping of the reference coordinates $(\xi, \eta)$ to physical space $(x, y)$:
\begin{equation}
x = 500(1 + \xi + 0.1 \sin(\pi \xi) \sin(\pi \eta)), \quad y = 500(1 + \eta + 0.1 \sin(\pi \xi) \sin(\pi \eta)).
\end{equation}
The warped mesh is shown in Figure~\ref{warped_mesh}.
The simulations were run up to the final time $T = \SI{5000}{s}$.
\begin{figure}[h!]
    \centering
    \includegraphics[width = \textwidth]{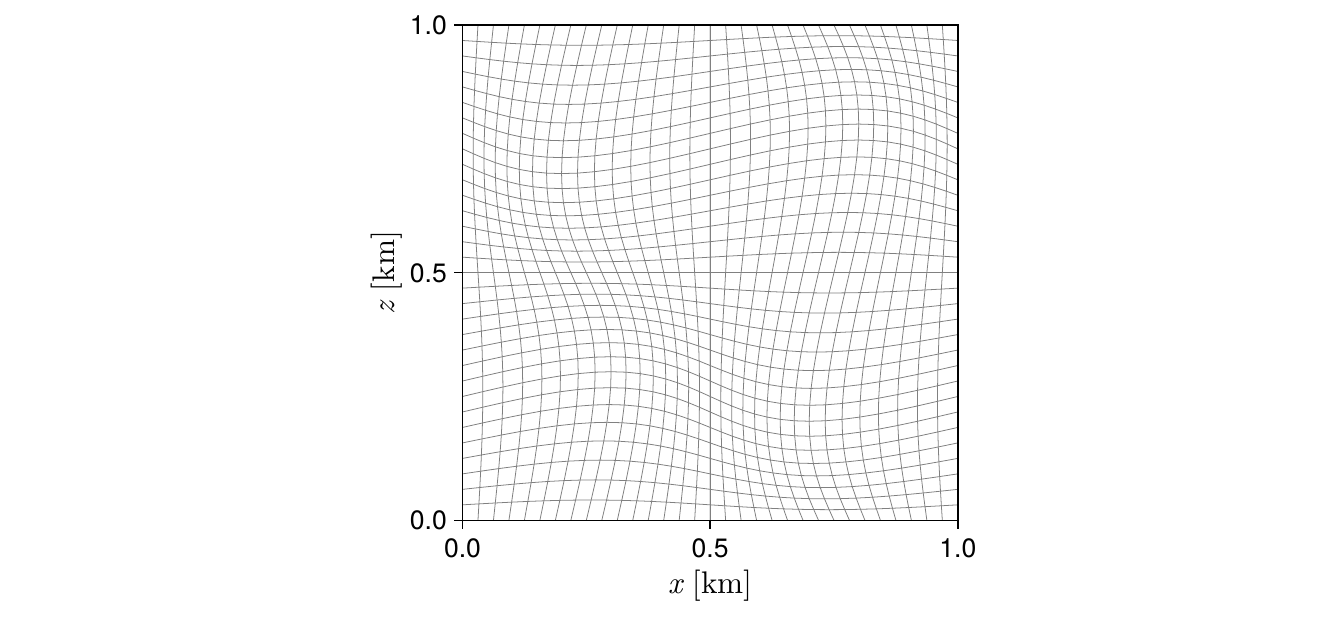}
    \caption{Warped mesh used for well-balancedness test cases.}
    \label{warped_mesh}
\end{figure}
In Figures~\ref{balance_rhoE} and~\ref{balance_rhotheta}, the $L_2$ error of the velocity components is shown over the timesteps. Due to the singularity of the logarithmic and Stolarsky mean, a numerically stable evaluation through Taylor expansion is required~\cite{ismail2009affordable,Winters2020}.
These approximations may introduce floating-point errors that accumulate and cause an increase in the error over long integration intervals. Moreover, for the constant potential temperature background state, the growth may occur because floating-point errors trigger a physical instability, as this hydrostatic balance state is metastable. To verify that the growth was solely due to floating-point inaccuracies, we repeated the simulation using 64-bit double precision. The error in the well-balancedness is preserved up to machine precision, even after $500,000$ timesteps.
\begin{figure}[h!]
    \centering
    \includegraphics[width = \textwidth]{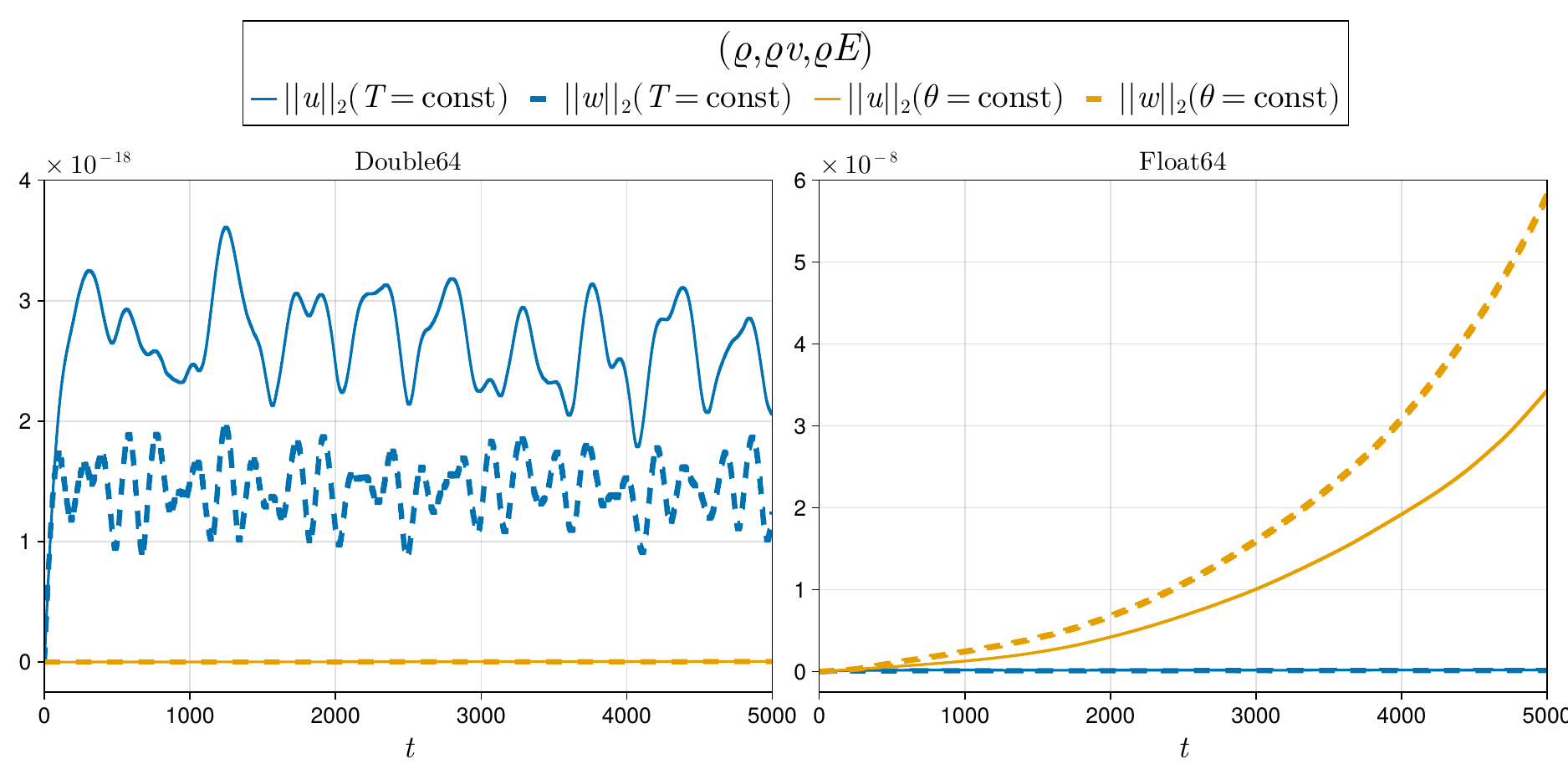}
    \caption{$L_2$ error of the horizontal and vertical velocity for the two different constant background states for the compressible Euler equation with total energy as prognostic variable.}
    \label{balance_rhoE}
\end{figure}

\begin{figure}[h!]
    \centering
    \includegraphics[width = \textwidth]{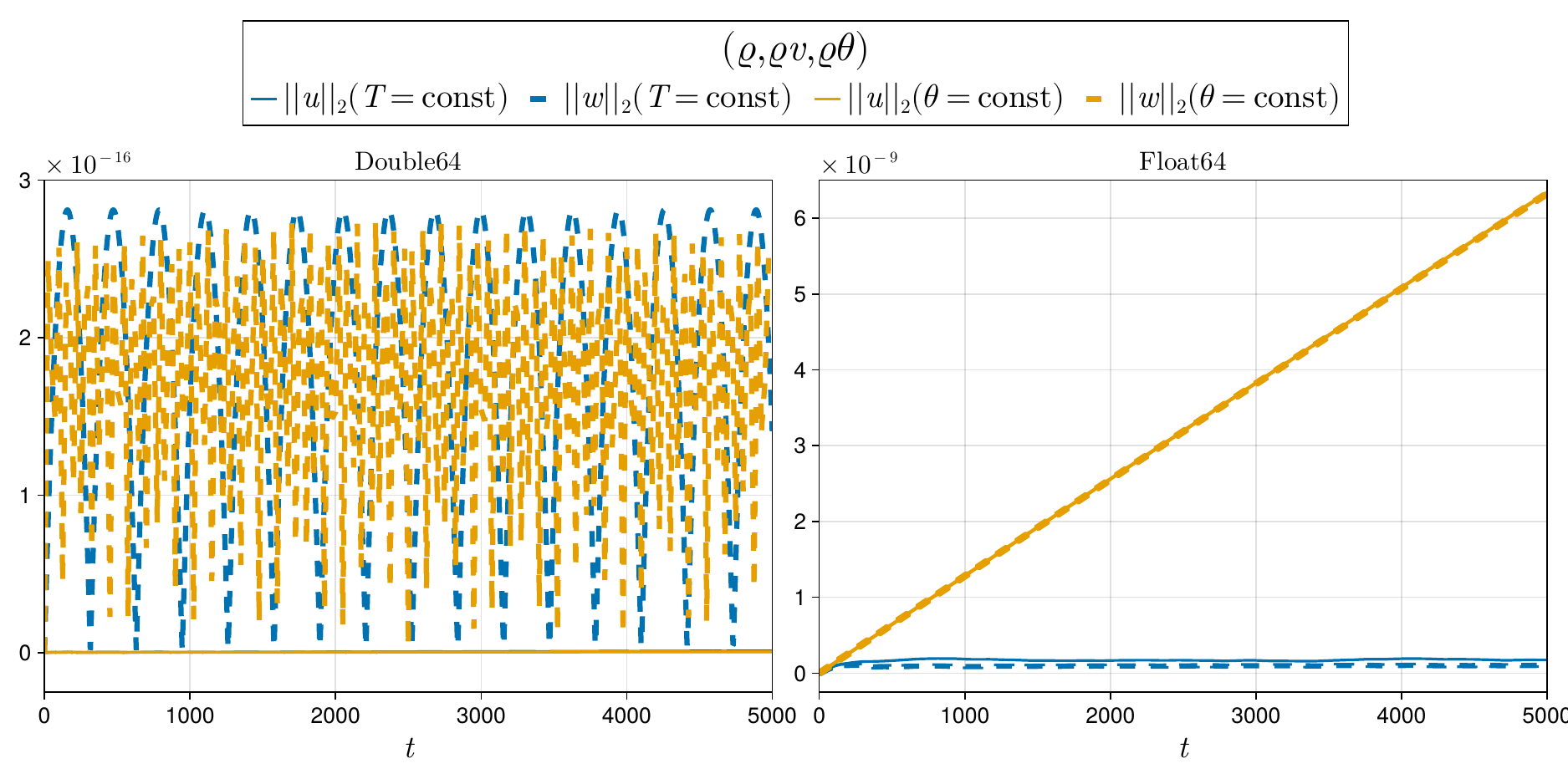}
    \caption{$L_2$ error of the horizontal and vertical velocity for the two different constant background states for the compressible Euler equation with potential temperature as prognostic variable.}
    \label{balance_rhotheta}
\end{figure}

\subsection{Inertia gravity waves}
The numerical convergence of the high-order DGSEM can be assessed via the linearized solution obtained by Baldauf et al.\ \cite{baldauf2013} for the gravity wave in a channel.
The atmosphere is initialized with an isothermal background state with a temperature value of $T_0 = \SI{250}{K}$. The constant background state is then perturbed with a warm bubble of the form
\begin{equation}
    T' = \Delta T \sin \left (\frac{ \pi z}{H} \right ) \exp \left (-\frac{(x - x_c)^2}{a^2}\right ),
\end{equation}
where $\Delta T$ is the maximum temperature perturbation, $H = \SI{10}{km}$  is the height of the channel, $L = \SI{300}{km}$ is the length of the channel, $x_c = \SI{100}{km}$ is the origin of the perturbation, $a = \SI{5}{km}$ is the radius of the bubble perturbation, and $x$/$z$ are the horizontal/vertical coordinates.
The potential $\phi = g z$, where $g = \SI{9.81}{m/s^2}$, and the solution is computed from zero to time $t = \SI{1800}{s}$. The non-conservative flux for both compressible Euler formulations is given by the isothermal well-balanced scheme~\eqref{nonconservativeT}.
Several authors~\cite{blaise2016,BALDAUF2021110635,WARUSZEWSKI2022111507} have shown that high $\Delta T$ leads to saturation error due to the non-linear effects on refined grids and high-order polynomials. Thus, we follow the same approach and set the initial temperature perturbation $\Delta T = 10^{-3}$.
The domain is discretized with a uniform grid with $N_x \times N_z$ elements, where the ratio between $\frac{\Delta x}{\Delta z} = 3$ is held constant and the $\text{CFL} = 0.1$.

\begin{figure}[h!]
    \centering
    \includegraphics[width = \textwidth]{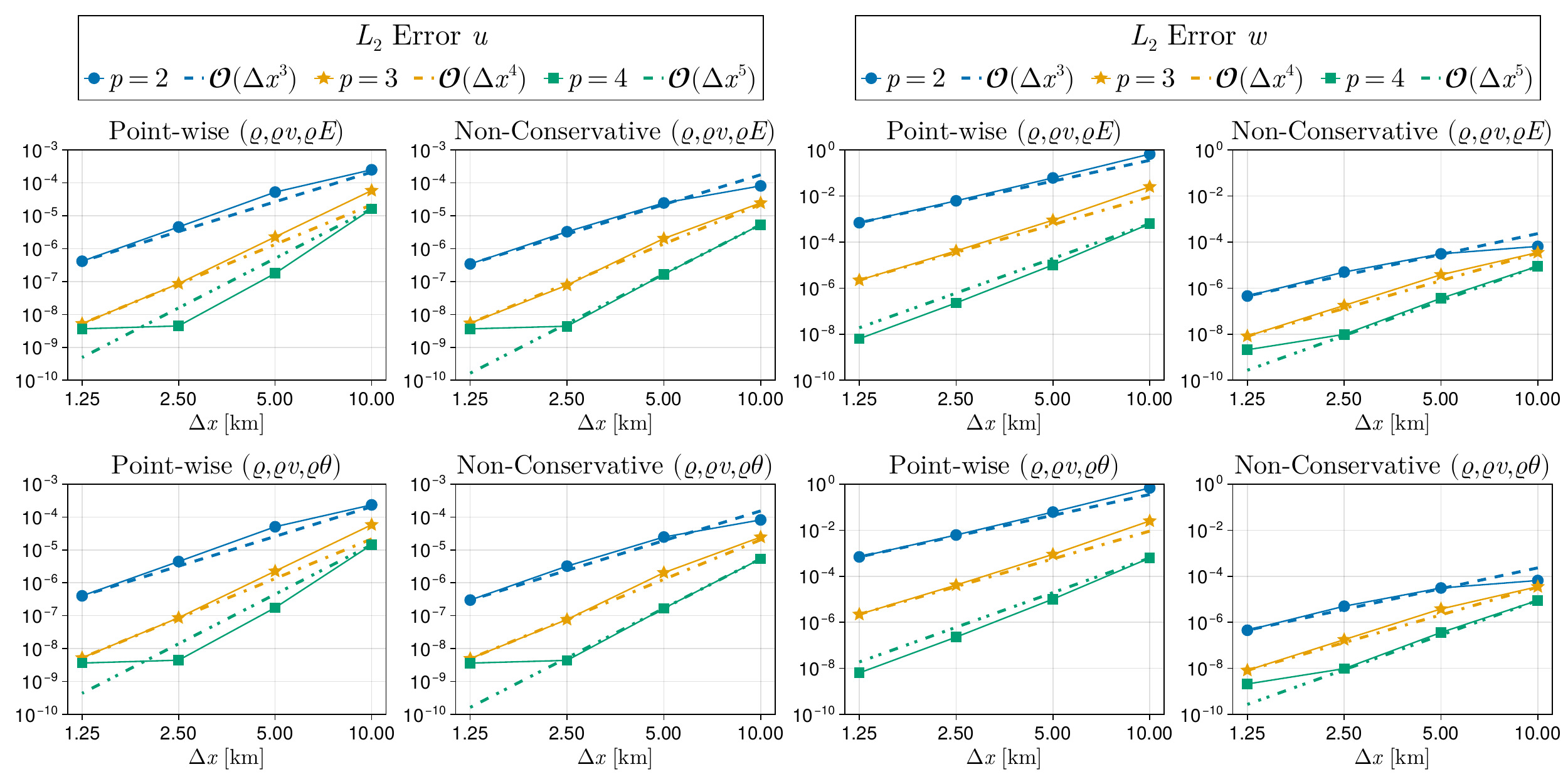}
    \caption{$L_2$ error of the horizontal (left two columns) and vertical (right two columns) velocity with point-wise (first and third column) and non-conservative (second and fourth column) discretization of the source term, for the Euler equations with total energy (top) and potential temperature (bottom).}
    \label{uw_igw}
\end{figure}

\begin{figure}[h!]
    \centering
    \includegraphics[width = \textwidth]{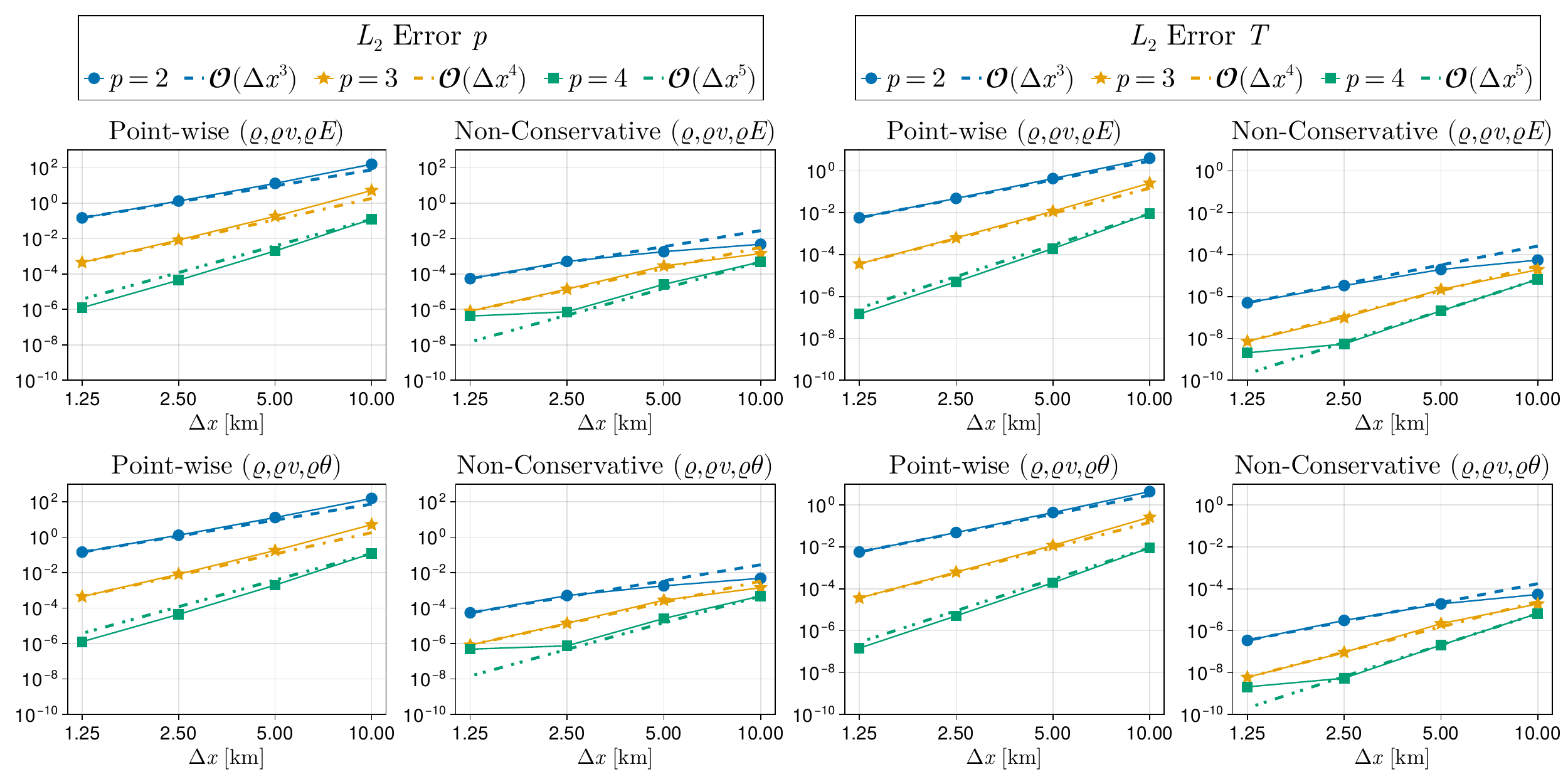}
    \caption{$L_2$ error of the pressure (left two columns) and temperature (right two columns) with point-wise (first and third column) and non-conservative (second and fourth column) discretization of the source term, for the Euler equations with total energy (top) and potential temperature (bottom).}
    \label{pT_igw}
\end{figure}

Figures~\ref{uw_igw} show the $L_2$ errors of the horizontal and vertical velocity components.
Our results demonstrate perfect agreement between the two formulations, and in this particular test case, we do not observe a clear advantage of one formulation over the other.
We would like to emphasize that the point-wise discretization of the source term, as opposed to the non-conservative approach, shows a difference of four orders of magnitude in the vertical component.
This behavior is also observed in the other variables, such as pressure and temperature, reported in Figure~\ref{pT_igw}.
Furthermore, the results are in agreement with \cite{WARUSZEWSKI2022111507}.
Additionally, as in~\cite{blaise2016,BALDAUF2021110635,WARUSZEWSKI2022111507}, saturation errors arising from nonlinear effects are present for polynomial degree 4, and they are particularly more pronounced in the horizontal velocity component, where it seems to suggest that a plateau has been reached.
The analysis has been done also considering the ETEC and EC numerical fluxes for the potential temperature and all the schemes show the same convergence rates.

The numerical fluxes and the two formulations are also compared on curvilinear mesh, with the transformation proposed by~\cite{WARUSZEWSKI2022111507}. The convergence rates are shown in Figure~\ref{wT_igw_warped}.
Compared to the point-wise discretization, the non-conservative discretization shows significantly lower error due to the extension of the well-balanced property to curvilinear meshes.

\begin{figure}[h!]
    \centering
    \includegraphics[width = \textwidth]{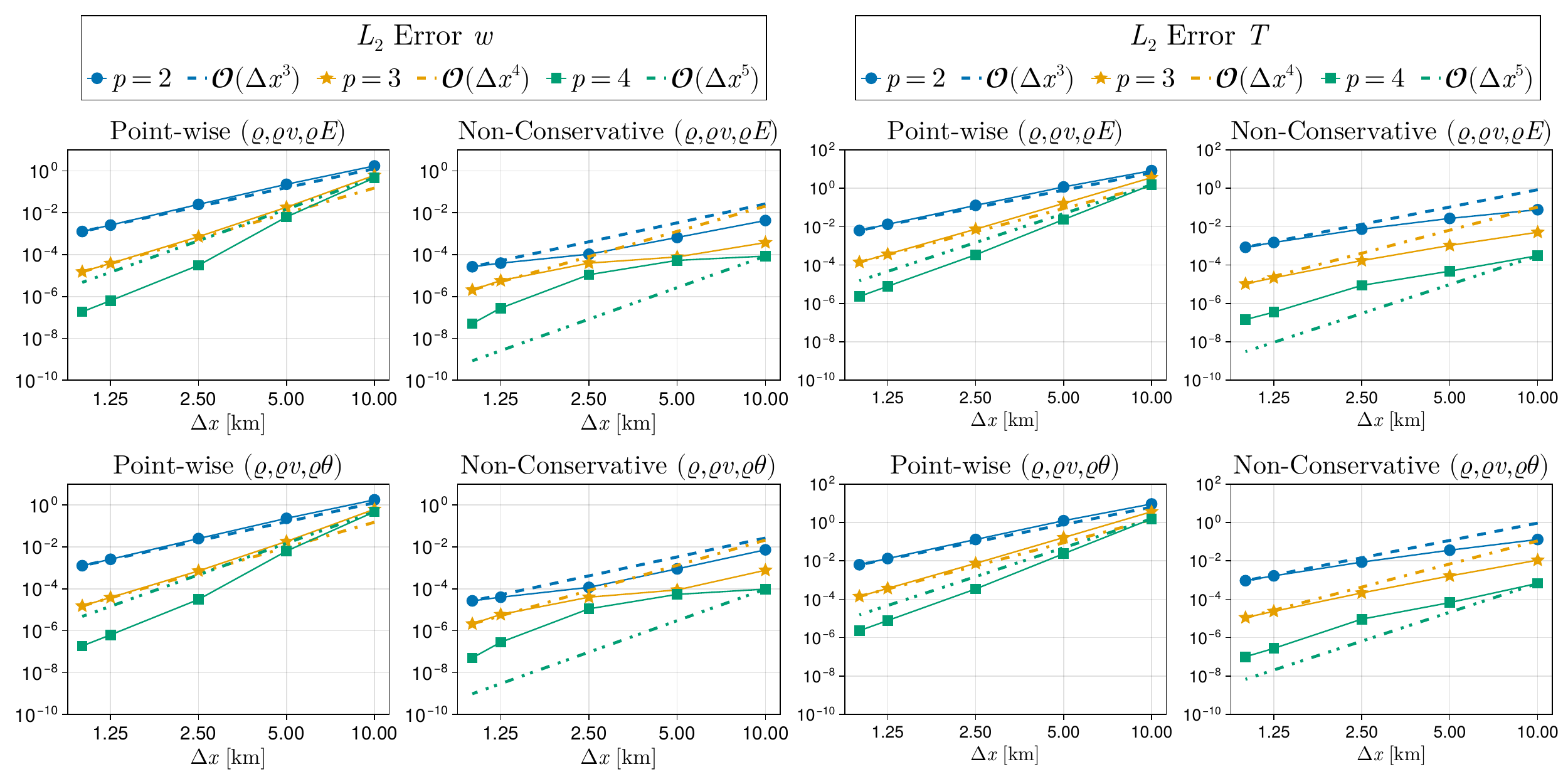}
    \caption{$L_2$ error of the vertical velocity (left two columns) and temperature (right two columns) with point-wise (first and third column) and non-conservative (second and fourth column) discretization of the source term, for the Euler equations with total energy (top) and potential temperature (bottom) on the warped mesh.}
    \label{wT_igw_warped}
\end{figure}

The convergence analysis has also been conducted with different numerical fluxes for the source term and having the well-balanced scheme appears to be crucial for high-order convergence of the solution, in contrast to conservation of TEC and KPEP, which play a marginal role in this particular test case. Figure~\ref{contour_igw} shows a contour comparison between the point-wise and non-conservative discretization. The oscillations observed in the point-wise source term discretization are purely numerical, resulting from the lack of a well-balanced scheme.

\begin{figure}[h!]
    \centering
    \includegraphics[width = \textwidth]{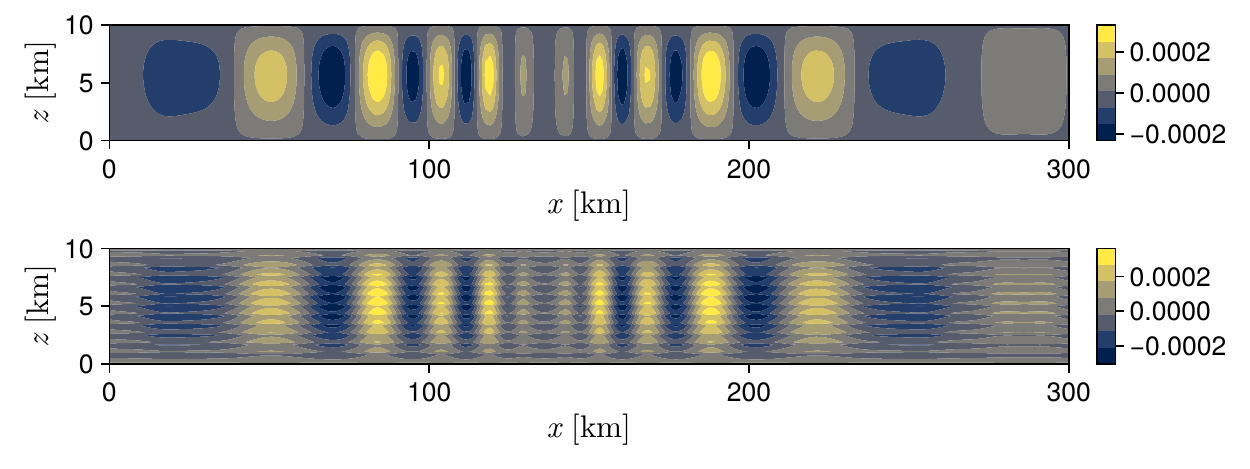}
    \caption{Contour of the vertical velocity with non-conservative (top panel) and point-wise (bottom panel) discretization of the source term, for the Euler equations with potential temperature.}
    \label{contour_igw}
\end{figure}

\subsection{Numerical results with orography}
In this section we present the numerical results for three idealized flows over orography.
To avoid spurious and unphysical solutions due to the reflections on the domain extrema, non-reflecting boundary conditions are prescribed on the top and lateral boundaries, with the introduction of Rayleigh damping profiles as in  \cite{ORLANDO2023115124,GIRFOGLIO2025106510}, which we provide here for completeness.
On the right-hand side of the momentum and either total energy or potential temperature equation, we add the source terms
\begin{equation}
\begin{aligned}
    \vec{q}_{\vec{V}} &= \varrho (\vec{V} - \vec{V}_0)( S_{\vec{V}}  + S_{h_1} + S_{h_2}),
    \\
    q_{\varrho \theta} &= \varrho (\theta - \theta_0)( S_{\vec{V}}  + S_{h_1} + S_{h_2}),
    \\
    q_{\varrho E} &= \varrho (\theta - \theta_0) K \frac{\gamma}{\gamma-1} (\varrho \theta)^{\gamma-1}( S_{\vec{V}}  + S_{h_1} + S_{h_2}) + \vec{V} \cdot \vec{q}_{\vec{V}},
\end{aligned}
\end{equation}
where the Rayleigh damping profiles $S_{\vec{V}}$, $S_{h_1}$, and $S_{h_2}$ are defined as
\begin{equation}
\begin{aligned}
    & S_{\vec{V}}= \begin{cases}0, & \text { if } \quad z<z_B \\
    \alpha \sin ^2\left[\frac{\pi}{2}\left(\frac{z-z_B}{z_T-z_B}\right)\right], & \text { if } \quad z \geq z_B\end{cases} \\
    & S_{h_1}= \begin{cases}0, & \text { if } x<x_{B} \\
    \alpha \sin ^2\left[\frac{\pi}{2}\left(\frac{x-x_{B}}{x_T-x_{B}}\right)\right], & \text { if } x \geq x_{B}\end{cases} \\
    & S_{h_2}= \begin{cases}0, & \text { if } x>-x_{B} \\
    \alpha \sin ^2\left[\frac{\pi}{2}\left(\frac{x+x_{B}}{x_B-x_T}\right)\right], & \text { if } x \leq -x_{B} \end{cases}
    \end{aligned}
\label{RayleighDampingProfile}
\end{equation}
\revB{where the physical domain is bounded by $[-x_B, x_B] \times [h(x), z_T]$, with $h(x)$ denoting the bottom topography, and $z_B$ denoting the height at which the vertical damping layer starts.}

\subsubsection{Linear hydrostatic mountain}
The first classical benchmark test case of a flow over a hill is the linear hydrostatic mountain, see \cite{ACompressibleModelfortheSimulationofMoistMountainWaves, GIRALDO20083849} for a thorough description of the test case. The mountain profile is the well-known \textit{versiera di Agnesi} given by
\begin{equation}
    h(x,z) = \frac{h_c}{1 + \left (\frac{x}{a_c} \right )^2},
    \label{versiera}
\end{equation}
where $h_c$ is the height of the mountain and $a_c$ is its half-width.
The computational domain is \revB{included in} $[-120, 120] \times [0, 30]~\mathrm{km}^2$ \revB{and bounded from below by the mountain profile $h$}; thus the mountain peak is centered at 0 with $h_c = \SI{1}{m}$ and $a_c = \SI{10}{km}$. The atmosphere is initialized with a constant background temperature $T_0 = \SI{250}{K}$ and a constant mean flow $\overline{u} = \revA{\SI{20}{m/s}}$.
The pressure is initialized with the Exner pressure $\overline{\pi}$ with the following $p = p_0 \overline{\pi}^{c_p/R}$, where $\overline{\pi} = \exp(\mathcal{N}^2/g z)$ and $\mathcal{N} = g / \sqrt{c_p T_0}$ is the Brunt-Väisälä frequency.
Since in this case $\frac{\mathcal{N} a_c}{\overline{u}} > 1 $, the flow is in hydrostatic regime \cite{GIRALDO20083849,SimpleTestsofaSemiImplicitSemiLagrangianModelon2DMountainWaveProblems}.
A no-slip boundary condition is used at the bottom and non-reflecting boundary conditions are used at the top and lateral boundaries, with $\alpha = 0.1$, $z_B = \SI{15}{km}$, $z_T = \SI{30}{km}$, $x_B = \SI{80}{km}$, and $x_T = \SI{120}{km}$.
The polynomial degree is $p = 3$, and we use $100 \times 60$ elements (horizontal $\times$ vertical direction). \revA{Here the gravity term is again discretized for both formulations with the isothermal well-balanced scheme~\eqref{nonconservativeT}.}

The quantity of interest for this benchmark is the analytical momentum flux \cite{Smith1979TheIO}
\begin{equation}
    m = \int_{-\infty}^{\infty} \overline{\varrho}(z) u'(x,z) w'(x,z) dx,
\end{equation}
where $u'$ and $w'$ are the velocity perturbations and $\overline{\varrho}$ is the density. The analytical momentum flux is given by
\begin{equation}
    m^H = - \frac{\pi}{4}\overline{\varrho}_s \overline{u}_s N h_m^2,
\end{equation}
where $\overline{\varrho}_s$ and $\overline{u}_s$ are the background surface density and velocity. Thus, the ratio $m/m^H$ has to be as close as possible to 1.
Figure~\ref{comparison_hydrostatic_125} shows the normalized momentum for both formulations with different discretizations of the source terms.
The non-conservative formulations show the same results for both sets of conserved variables and an increased robustness compared to the point-wise discretization of the source term, which shows spurious oscillations.
This behavior can also be seen in Figure~\ref{Evolution12}, where the evolution of the normalized momentum is reported.
It can be noticed that while the momentum approaches its analytical value, for the non-conservative formulations there are no spurious oscillations.
Figure~\ref{Contouruwlinear} shows the contour plots of the horizontal and vertical velocity components.
On a more refined grid the two schemes for both sets of conserved variables agree rather well and the spurious oscillations diminish (here not shown).

\begin{figure}[h!]
    \centering
    \includegraphics[scale = 0.5]{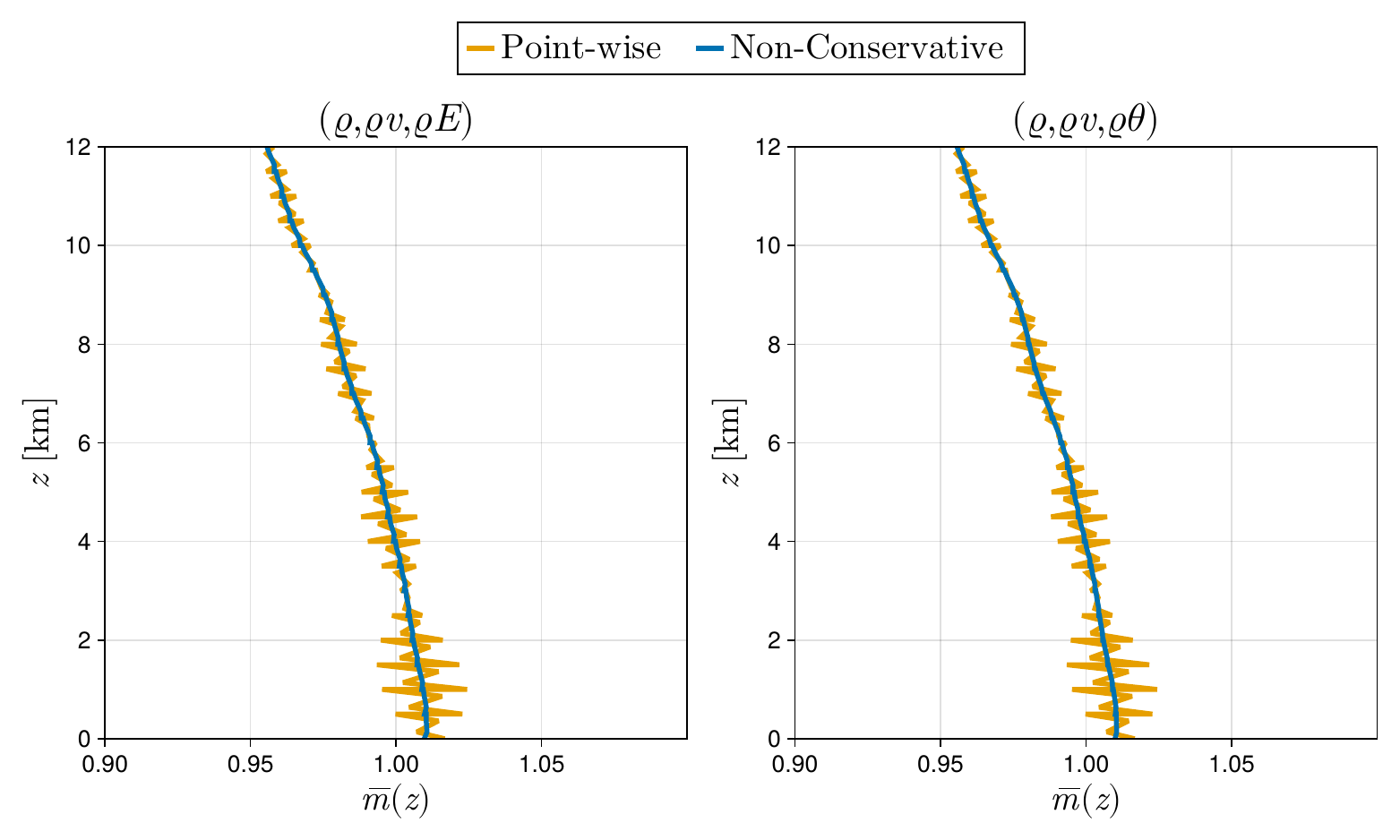}
    \caption{Comparison of the normalized momentum for the linear hydrostatic mountain test case for the 4 different formulations at $T = \SI{12.5}{h}$, with $100\times 60$ elements along the $x$ direction and $z$ direction, respectively, and $p = 3$.}
    \label{comparison_hydrostatic_125}
\end{figure}

\begin{figure}[h!]
    \centering
    \includegraphics[scale = 0.5]{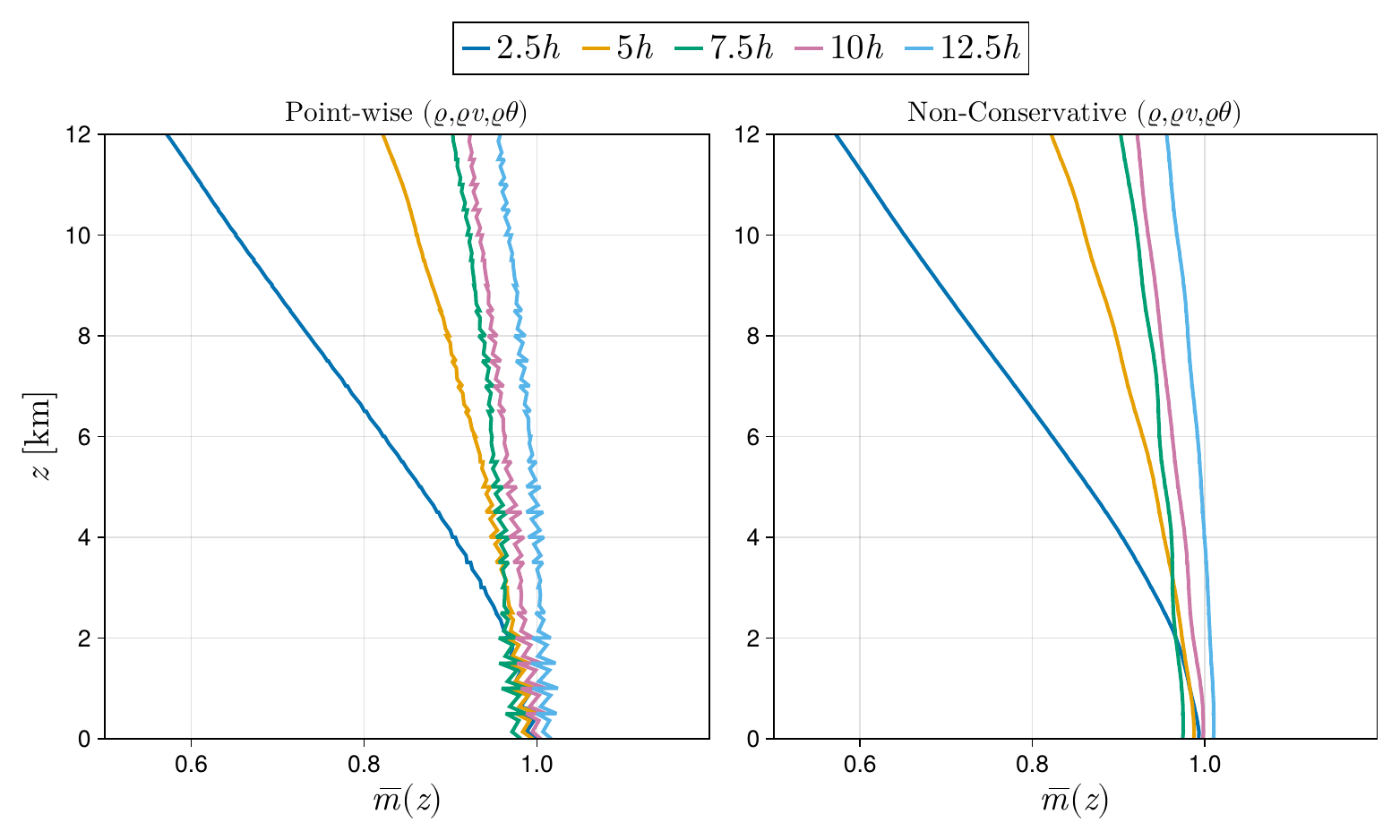}
    \caption{Evolution of the normalized momentum for the linear hydrostatic mountain with $100 \times 60$ elements along the $x$ and $z$ direction respectively and $p = 3$\revB{, at different times from $2.5$ to $12.5$ hours.}}
    \label{Evolution12}
\end{figure}

\begin{figure}[h!]
    \centering
    \includegraphics[width = \textwidth]{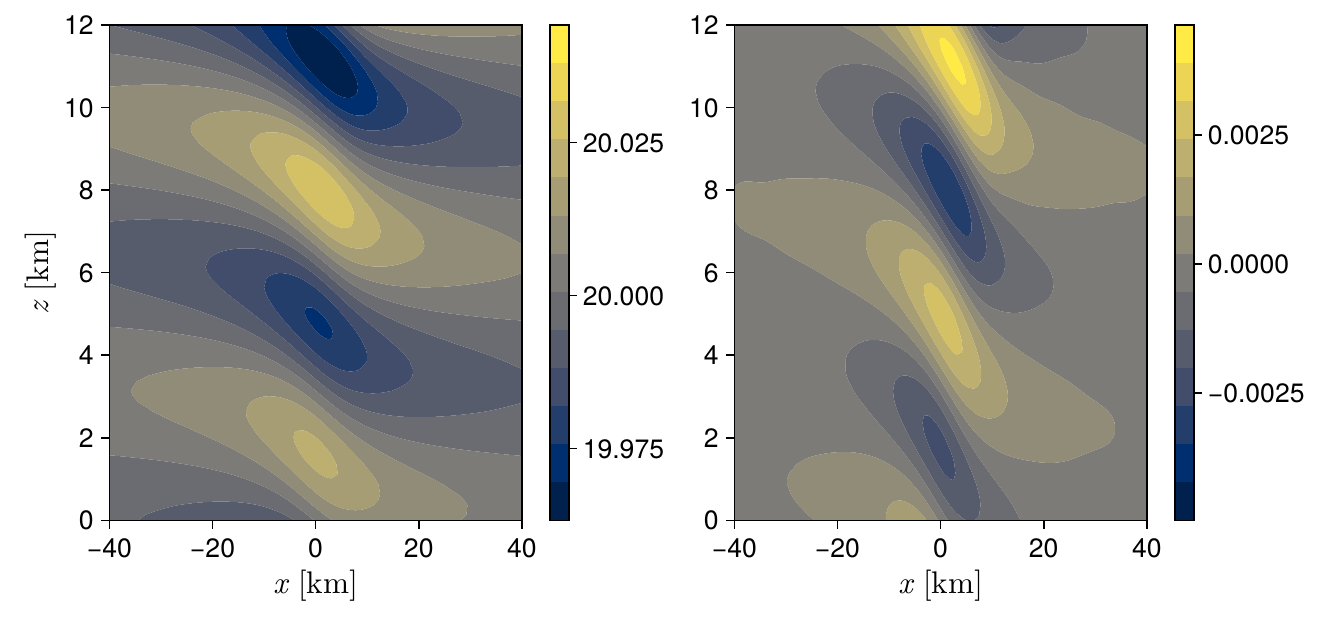}
    \caption{Contours of the horizontal (left) and vertical (right) velocity components for the linear hydrostatic mountain with $100 \times 60$ elements in the $x$ and $z$ directions, respectively and $p = 3$\revB{, for the compressible Euler equations using $\varrho E$ as prognostic variable and the nonconservative formulation of the gravity term, at the final time $T = 12.5$ hours.}}
    \label{Contouruwlinear}
\end{figure}

\subsubsection{Linear nonhydrostatic mountain}
The second classical benchmark test case focuses on linear nonhydrostatic mountain waves, see \cite{ACompressibleModelfortheSimulationofMoistMountainWaves, GIRALDO20083849} for a detailed description of the test case.
Here, the initial atmospheric state consists of a uniform horizontal mean flow $\overline{u} = \SI{10}{m/s}$ within a uniformly stratified atmosphere characterized by a constant Brunt-Väisälä frequency $\mathcal{N} = \SI{0.01}{s^{-1}}$.
The potential temperature and Exner pressure profiles are initialized following the hydrostatic balance:
\begin{equation} \overline{\pi}(z) = 1 + \frac{g^2}{c_p \theta_0 \mathcal{N}^2} \left( \exp\left(-\frac{\mathcal{N}^2}{g} z\right) - 1 \right),
\end{equation}
where $\theta_0 = \SI{280}{K}$ is the surface potential temperature.
The pressure is computed as
\begin{equation}
    p = p_0 \overline{\pi}^{c_p/R},
\end{equation}
where $R = c_p - c_v$ is the gas constant for dry air.
The potential temperature is given by
\begin{equation}
    \theta(z) = \theta_0 \exp\left(\frac{\mathcal{N}^2}{g} z\right),
\end{equation}
and the temperature is recovered by $T = \theta \overline{\pi}$.
The background density is
\begin{equation}
    \overline{\varrho} = \frac{p}{R T}.
\end{equation}
The initial velocity field is set as $u = \overline{u}$ and $w = 0$.
The mountain profile is given by the classical \textit{versiera di Agnesi} \eqref{versiera}, where $h_c = \SI{1}{m}$ is the mountain height and $a_c = \SI{1}{km}$ is the half-width.

The computational domain is $(x,z) \in [0, 144] \times [0, 30]~\mathrm{km}^2$, and the simulation time spans $t \in [0, 8]$ hours. No-flux boundary conditions are imposed at the bottom, while non-reflecting boundary conditions are used at the top and lateral boundaries. Damping layers are applied in the last \SI{15}{km} in the vertical direction and over the last \SI{40}{km} in the horizontal direction, with a damping coefficient $\alpha = 0.03$. \revA{The gravity term is here discretized with the constant potential temperature well-balanced scheme~\eqref{nonconservativetheta}}.

Since in this configuration $\mathcal{N} a_c / \overline{u} = 1$, the flow is in the nonhydrostatic regime~\cite{GIRALDO20083849,SimpleTestsofaSemiImplicitSemiLagrangianModelon2DMountainWaveProblems}. The quantity of interest for this benchmark is again the normalized momentum flux, defined analogously to the hydrostatic test case and normalized by $m^{NH} = 0.457m^H$ \cite{AnUpperBoundaryConditionPermittingInternalGravityWaveRadiationinNumericalMesoscaleModels}.
The results for the linear nonhydrostatic mountain test case are shown in Figures~\ref{comparison_nonhydrostatic_8} and \ref{Evolutionnon12}. The normalized momentum flux, computed at $T = \SI{8}{h}$ and reported in Figure~\ref{comparison_nonhydrostatic_8}, approaches the theoretical value of 1 for all formulations, similarly to what was observed in the hydrostatic case.

Figure~\ref{Evolutionnon12} visualizes the time evolution of the normalized momentum, showing values plotted at times \SI{2}{h}, \SI{4}{h}, \SI{6}{h}, and \SI{8}{h}. It can be observed that the normalized momentum progressively approaches the expected value of 1 over time. However, spurious oscillations are present, especially for the point-wise discretization of the source terms. These oscillations are less pronounced when using the non-conservative formulation, indicating a more robust behavior.
Figure~\ref{Contouruwnonlinear} shows contour plots of the horizontal and vertical velocity components.

\begin{figure}[h!]
    \centering
    \includegraphics[scale = 0.5]{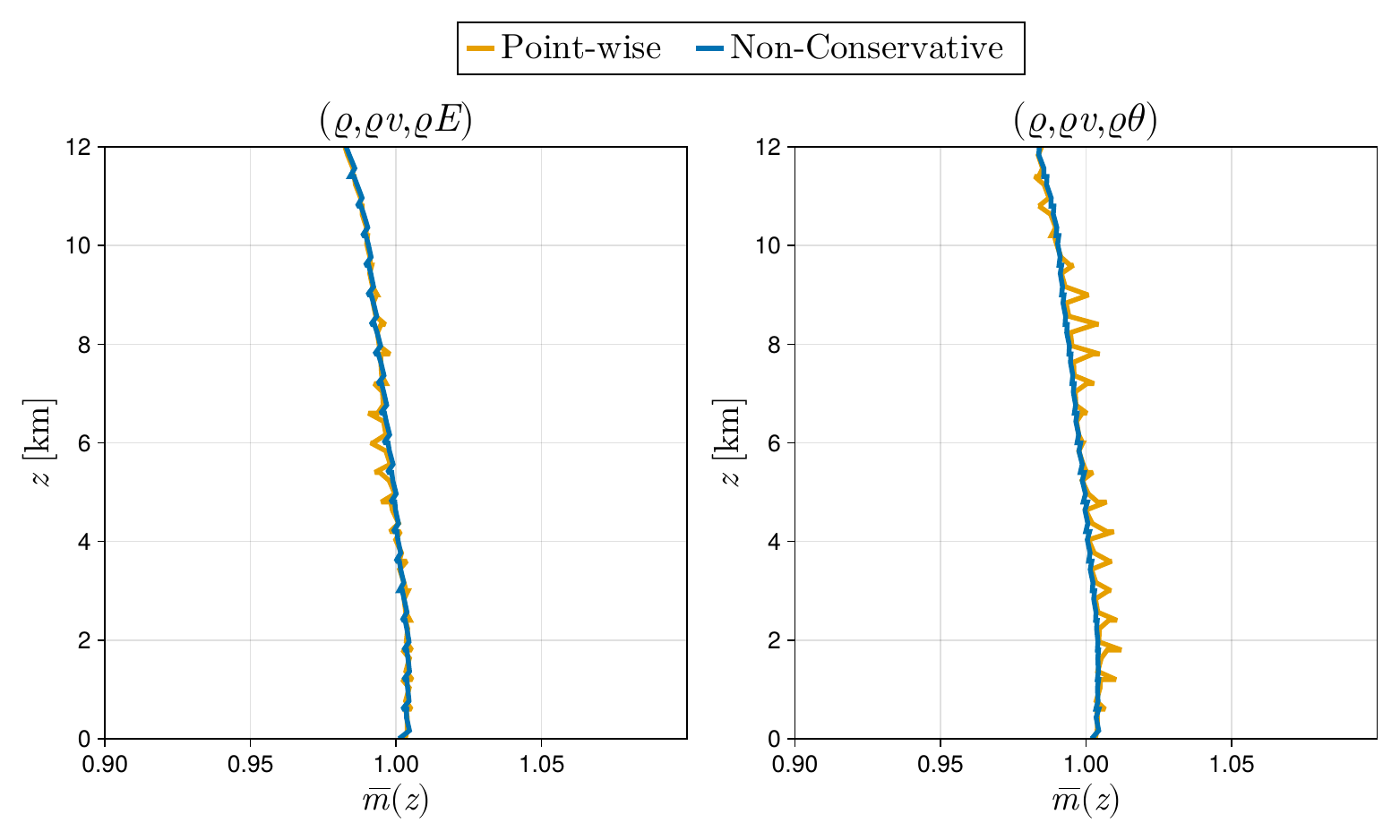}
    \caption{Comparison of the normalized momentum for the linear nonhydrostatic mountain test case for the 4 different formulations at $T = 8$h, with $200\times 50$ elements along the $x$ direction and $z$ direction, respectively, and $p = 3$.}
    \label{comparison_nonhydrostatic_8}
\end{figure}
\begin{figure}[h!]
    \centering
    \includegraphics[scale = 0.5]{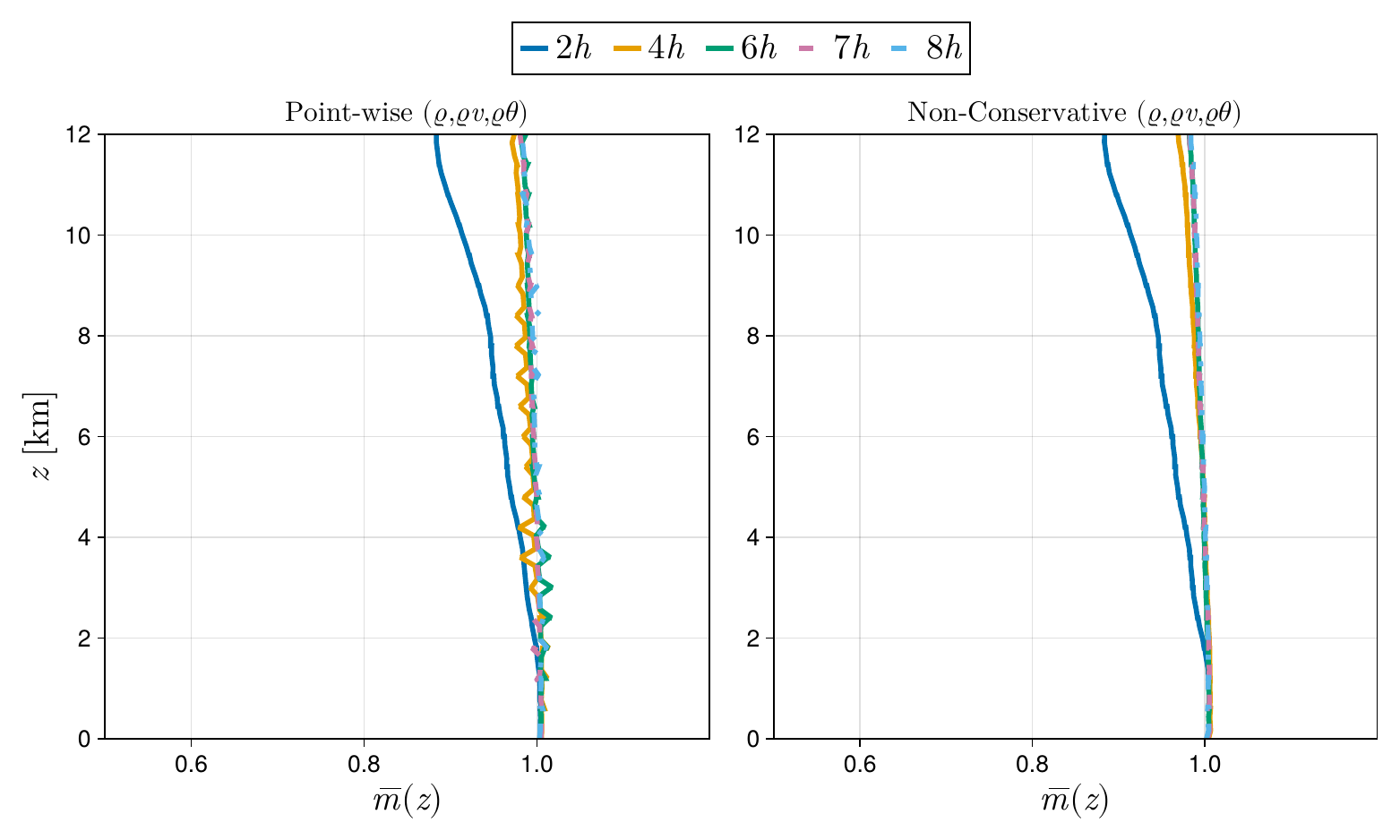}
    \caption{Evolution of the normalized momentum for the linear nonhydrostatic mountain with $200 \times 50$ elements along the $x$ and $z$ direction respectively and $p = 3$.}
    \label{Evolutionnon12}
\end{figure}

\begin{figure}[h!]
    \centering
    \includegraphics[width = \textwidth]{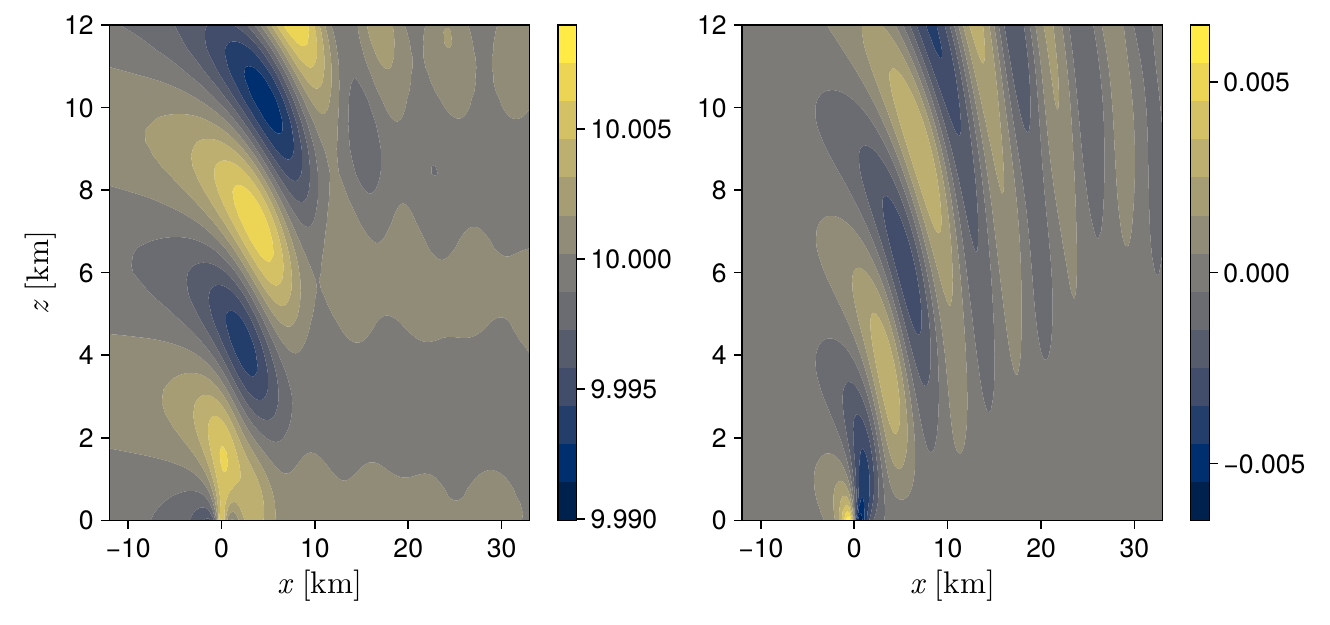}
        \caption{Contours of the horizontal (left) and vertical (right) velocity components for the linear nonhydrostatic mountain with $200 \times 50$ elements in the $x$ and $z$ directions, respectively and $p = 3$\revB{, for the compressible Euler equations using $\varrho E$ as prognostic variable and the nonconservative form of the gravity term, at the final time $T = 8$ hours.}}
    \label{Contouruwnonlinear}
\end{figure}

\subsubsection{Schär mountain}
The third benchmark test case considers the steady-state hydrostatic flow over an idealized multi-peak mountain, known as the Schär mountain \cite{ScharetAl}.
The mountain profile is defined by
\begin{equation}
    h(x) = h_c \exp\left(-\left(\frac{x}{a_c}\right)^2\right) \cos^2\left(\pi \frac{x}{\lambda_c}\right),
\end{equation}
where $h_c = \SI{250}{m}$ is the maximum mountain height, $a_c = \SI{5000}{m}$ controls the width of the Gaussian envelope, and $\lambda_c = \SI{4000}{m}$ sets the wavelength of the cosine perturbation.
The resulting orography generates five mountain peaks, symmetrically distributed around the domain center.

The computational domain is $[-\num{25000}, \num{25000}] \times [0, \num{21000}]~\mathrm{m}^2$, with a simulation time interval $t \in [0, 10]~\mathrm{h}$. No-flux boundary conditions are imposed at the bottom, while non-reflecting boundary conditions are enforced at the top and lateral boundaries using Rayleigh damping layers. Specifically, sponge layers are applied in the last \SI{8}{km} of the vertical direction and in the last \SI{5}{km} near both lateral boundaries, with a damping coefficient $\alpha = 0.03$.

The atmosphere is initialized with a uniform horizontal mean flow $\overline{u} = \SI{10}{m/s}$ and a constant Brunt–Väisälä frequency $\mathcal{N} = \SI{0.01}{s^{-1}}$. The initial thermodynamic profiles are derived under the assumption of hydrostatic balance. The Exner pressure $\overline{\pi}(z)$ is given by
\begin{equation}
    \overline{\pi}(z) = 1 + \frac{g^2}{c_p \theta_0 \mathcal{N}^2} \left( \exp\left(-\frac{\mathcal{N}^2}{g} z\right) - 1 \right),
\end{equation}
where $\theta_0 = \SI{280}{K}$ is the reference potential temperature at the surface. The pressure is then initialized as
\begin{equation}
    p = p_0 \overline{\pi}^{c_p/R},
\end{equation}
with reference pressure $p_0 = \SI{10000}{Pa}$ and gas constant $R = c_p - c_v$ for dry air. The potential temperature evolves with height as
\begin{equation}
    \theta(z) = \theta_0 \exp\left(\frac{\mathcal{N}^2}{g} z\right),
\end{equation}
and the temperature is computed as
$T(z) = \theta(z) \overline{\pi}(z)$.
The corresponding background density profile is given by the ideal gas law
\begin{equation}
    \overline{\varrho}(z) = \frac{p(z)}{R T(z)}.
\end{equation}
The initial velocity field is set uniformly as $u = \overline{u}$ and $w = 0$. \revA{We have discretized the gravity term with the constant potential
temperature well-balanced scheme~\eqref{nonconservativetheta}.}

Since in this configuration $\mathcal{N} a_c / \overline{u} > 1$, the flow is (strongly) non-hydrostatic~\cite{ScharetAl,GIRALDO20083849}. Figure~\ref{Contouruwschar} shows the contour of the horizontal and vertical velocity components at $T = \SI{5}{h}$. For this particular test case, the two formulations and the two discretization are in good agreement.

\begin{figure}[h!]
    \centering
    \includegraphics[width = \textwidth]{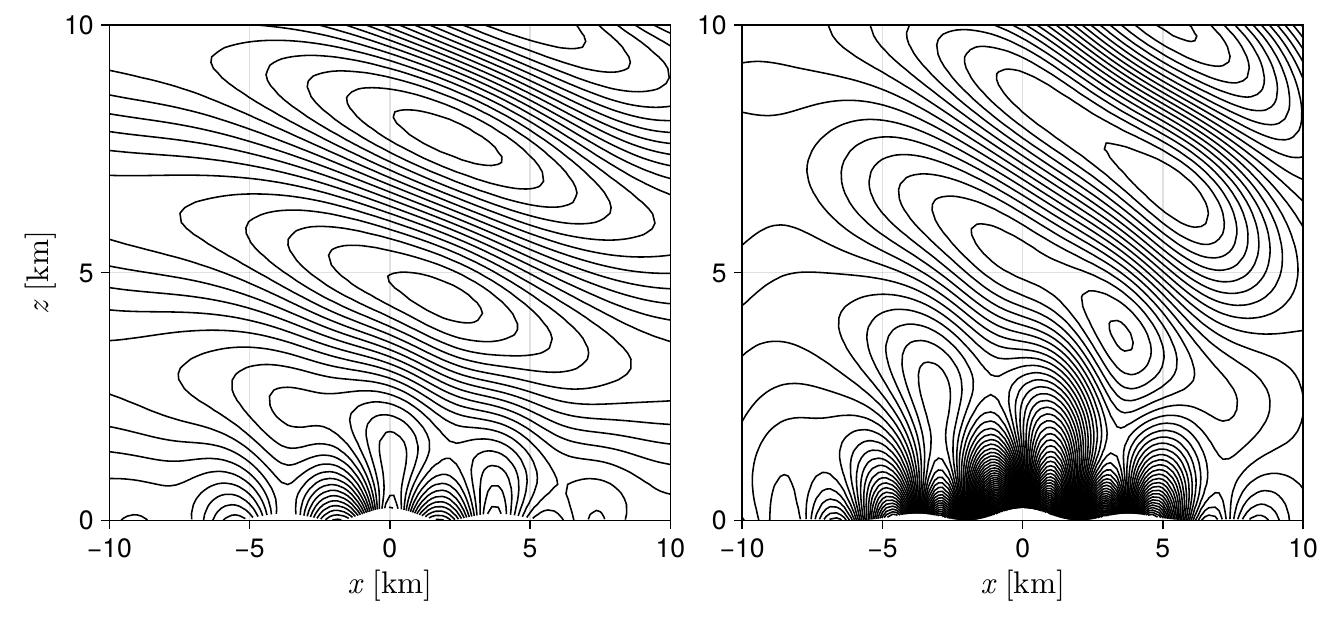}
  \caption{Contour lines of the horizontal (left) and vertical (right) velocity perturbation components for the mountain Schär test case with $100 \times 50$ elements along the $x$ and $z$ direction respectively and $p = 3$. Contour values are between $-\SI{2}{m/s}$ and $\SI{2}{m/s}$ with an interval equal to $\SI{0.2}{m/s}$ for horizontal velocity component and between $-\SI{2}{m/s}$ and $\SI{2}{m/s}$ with an interval equal to $\SI{0.05}{m/s}$ for vertical velocity component.}
    \label{Contouruwschar}
\end{figure}
\subsection{Baroclinic instability}
\label{sec:baroclinic_instability}
To further validate our formulation, we employ the baroclinic wave benchmark originally formulated by Ullrich et al.~\cite{ullrich2014} and adopted in the 2016 edition of the Dynamical Core Model Intercomparison Project (DCMIP)~\cite{ullrich2016}. This configuration has become a standard test for assessing the ability of atmospheric models to reproduce midlatitude baroclinic instability.

The initial state is a balanced, axisymmetric solution of the \revB{3D} deep-atmosphere equations, as described in Appendix~A of Ullrich et al.~\cite{ullrich2014}. A localized Gaussian perturbation is applied to the zonal wind field in the northern midlatitudes, which excites the growth of a baroclinic disturbance. In the early stages (up to about day~7) the evolution is predominantly linear, while at later times nonlinear interactions lead to steepening and eventual wave breaking.

In our study the simulation length is set to 10 days, covering both the linear growth phase and the transition to nonlinearity. The spatial discretization uses an equiangular cubed--sphere grid with $K_h = 8$ elements on the horizontal and $K_v = 4$ elements in the vertical, for a total of six panels and polynomial degree $p = 5$.

The point-wise discretization of the source term is unconditionally unstable, independently by the set of equations.
On the other hand the non-conservative formulation could run stably and without any need of filtering for long simulation time. The contour of the surface pressure after 10 days are shown in Figure~\ref{baroclinic}.

\begin{figure}[htb]
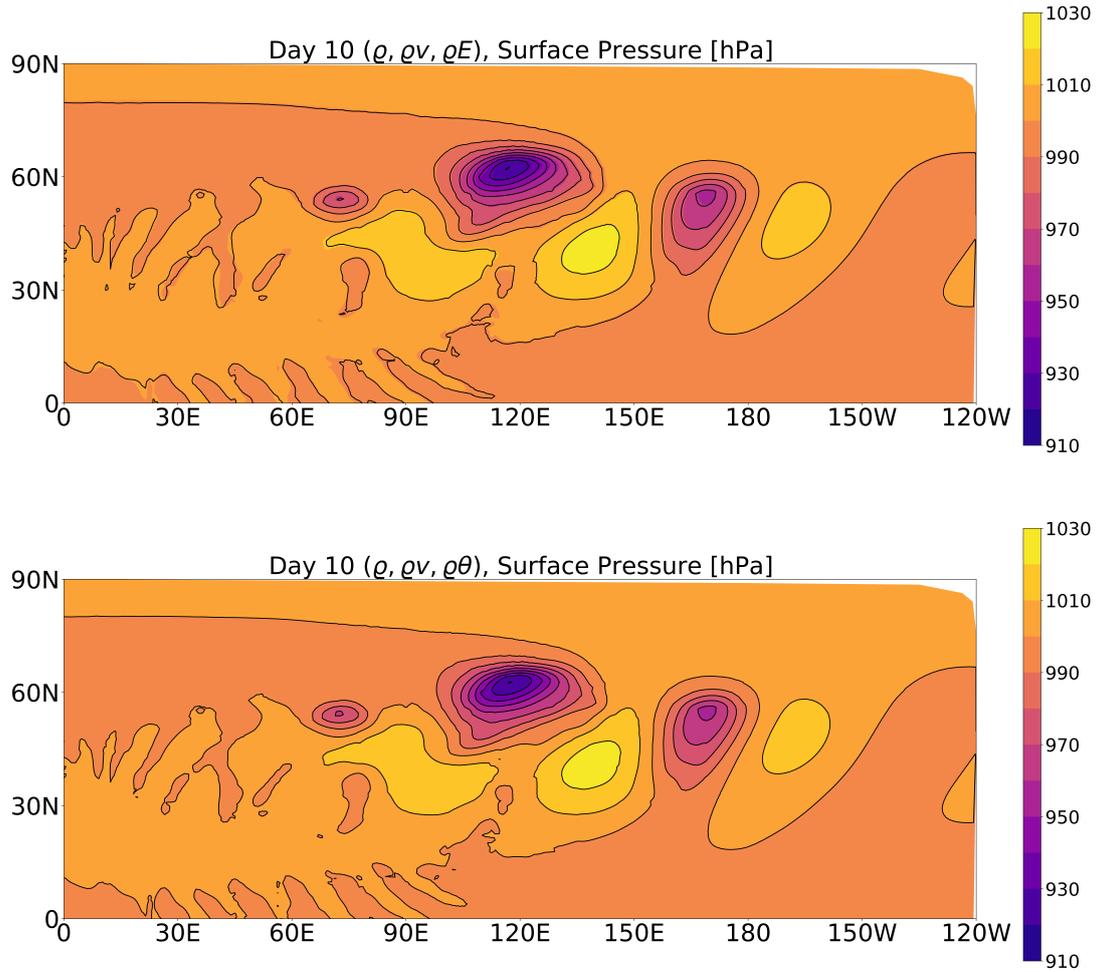

    \centering
    \begin{subfigure}{0.9\textwidth}
        \centering
        \includegraphics[width=\textwidth]{contour_pressure_euler_new_colormap.png}
        \label{baroclinic1}
    \end{subfigure}

    \vspace{0.3cm}

    \begin{subfigure}{0.9\textwidth}
        \centering
        \includegraphics[width=\textwidth]{contour_pressure_theta_new_colormap.png}
        \label{baroclinic2}
    \end{subfigure}

    \caption{Contour of the surface pressure at day 10 for the baroclinic instability test case, with polynomial degree 5.}
    \label{baroclinic}
\end{figure}

\section{Conclusions}
\label{sec:conclusions}

In this work we derived three new numerical fluxes for the compressible Euler equations, formulated with potential temperature as a primary invariant.
Depending on the choice, these fluxes can preserve thermodynamic entropy (EC), total energy (TEC), or both simultaneously (ETEC).
The EC and TEC fluxes allow a degree of freedom in the density flux. For both cases, the logarithmic mean provide the most robust formulation; the EC flux then is PEP, while the TEC flux additional satisfies the EC condition under constant pressure.
We introduced a general definition of the kinetic and potential energy preservation (KPEP) property and derived a condition for total energy conservation (TEC), when a geopotential term is included, which rely on a non-conservative discretization of the geopotential term in flux differencing form.
Furthermore, we developed a well-balanced scheme for the case of a constant background potential temperature and extended all properties to discontinous Galerkin spectral-element method (DGSEM) on arbitrary curvilinear coordinates.
The robustness and accuracy of the novel numerical fluxes and schemes have been assessed with a variety of standard atmospheric benchmark problems. The correct convergence rates were obtained, and the non-conservative approach showed up to two to four order of magnitude improvement, where the well-balancedness property was crucial to achieving such differences.
The numerical experiments demonstrated that the newly derived fluxes are both robust and accurate, performing consistently with the entropy conservative Ranocha flux for the total energy formulation.
No significant differences were observed between the two formulations, using total energy or potential temperature as prognostic variables, indicating that neither approach appears to offer a clear advantage over the other based on our test cases.
Additionally, schemes employing a non-conservative discretization of the source term consistently exhibited higher robustness and accuracy compared to the point-wise approach.

\appendix

\section*{Acknowledgments}

This work was supported by the Max Planck Graduate Center with the
Johannes Gutenberg University of Mainz (MPGC).
MA and HR were supported by the Deutsche Forschungsgemeinschaft
(DFG, German Research Foundation, project numbers 513301895 and 528753982
as well as within the DFG priority program SPP~2410 with project number 526031774)
and the Daimler und Benz Stiftung (Daimler and Benz foundation,
project number 32-10/22).
We acknowledge support from the Mainz Institute of Multiscale Modeling (M3ODEL).

We thank Andr{\'e}s Rueda-Ram{\'i}rez for some valuable discussions on well-balanced methods and structure-preserving discretizations.

\section*{Data availability statement}

All Julia source code and data needed to reproduce the numerical results
presented in this paper are available in our reproducibility repository
\cite{artiano2025structureRepro}.

\bibliographystyle{elsarticle-num}
\bibliography{refs}
%\printbibliography

\end{document}